\documentclass[11pt]{article}
\usepackage{amssymb,amsmath}
\usepackage[mathscr]{eucal}
\usepackage[cm]{fullpage}
\usepackage[english]{babel}
\usepackage[latin1]{inputenc}
\usepackage{color}

%%%%%%%%%%% Macros %%%%%%%%%%%
\def\dom{\mathop{\mathrm{Dom}}\nolimits}
\def\im{\mathop{\mathrm{Im}}\nolimits}
\def\d{\mathrm{d}}
\def\N{\mathbb N}
\def\PT{\mathcal{PT}}
\def\T{\mathcal{T}}
\def\Sym{\mathcal{S}}

\def\DP{\mathcal{DP}}
\def\DPS{\mathcal{DPS}}
\def\ODP{\mathcal{ODP}}

\def\I{\mathcal{I}}
%%%%%%%%%%%
\newtheorem{theorem}{Theorem}[section]
\newtheorem{proposition}[theorem]{Proposition}
\newtheorem{corollary}[theorem]{Corollary}
\newtheorem{lemma}[theorem]{Lemma}

\newenvironment{proof}{\begin{trivlist}\item[\hskip%
\labelsep{\bf Proof.}]}%
{\qed\rm\end{trivlist}}
\newcommand{\qed}{{\unskip\nobreak
\hfil\penalty50\hskip .001pt \hbox{}
          \nobreak\hfil
         \vrule height 1.2ex width 1.1ex depth -.1ex
           \parfillskip=0pt\finalhyphendemerits=0\medbreak}}
%%%%%%%%%%%%%%%%%%%%%%%%%%%%%%

\newcommand{\lastpage}{\addresss}

\newcommand{\addresss}{\small \sf  
\noindent{\sc V\'\i tor H. Fernandes}, 
Center for Mathematics and Applications (CMA), 
FCT NOVA and Department of Mathematics, FCT NOVA, 
Faculdade de Ci\^encias e Tecnologia, 
Universidade Nova de Lisboa, 
Monte da Caparica, 
2829-516 Caparica, 
Portugal; 
e-mail: vhf@fct.unl.pt. 

\medskip

\noindent{\sc T\^ania Paulista}, 
Departamento de Matem\'atica, 
Faculdade de Ci\^encias e Tecnologia, 
Universidade NOVA de Lisboa, 
Monte da Caparica, 
2829-516 Caparica, 
Portugal; 
e-mail: t.paulista@campus.fct.unl.pt. 
}

\title{On the monoid of partial isometries of a finite star graph}

\author{V\'\i tor H. Fernandes\footnote{This work is funded by national funds through the FCT - Funda\c c\~ao para a Ci\^encia e a Tecnologia, I.P., under the scope of the projects UIDB/00297/2020 and UIDP/00297/2020 (Center for Mathematics and Applications).}~
and T\^ania Paulista 
}

%%%%%%%%%%%%%%%%%%%%%%%%%%%%%%%%%%% 

\begin{document}

\maketitle

\begin{abstract}
In this paper we consider the monoid $\DPS_n$ of all partial isometries of a star graph $S_n$ with $n$ vertices. 
Our main objectives are to determine the rank and to exhibit a presentation of $\DPS_n$. 
We also describe Green's relations of $\DPS_n$ and calculate its cardinal.
\end{abstract}

\medskip

\noindent{\small 2020 \it Mathematics subject classification: \rm 20M20, 20M05, 05C12, 05C25.} 

\noindent{\small\it Keywords: \rm transformations, partial isometries, star graphs, rank, presentations.} 

\section{Introduction and Preliminaries}\label{presection} 

Let $\Omega$ be a finite set. We denote by $\PT(\Omega)$ the monoid (under composition) of all 
partial transformations on $\Omega$, by $\T(\Omega)$ the submonoid of $\PT(\Omega)$ of all 
full transformations on $\Omega$, by $\I(\Omega)$ 
the \textit{symmetric inverse monoid} on $\Omega$, i.e. 
the inverse submonoid of $\PT(\Omega)$ of all 
partial permutations on $\Omega$, 
and by $\Sym(\Omega)$ the \textit{symmetric group} on $\Omega$, 
i.e. the subgroup of $\PT(\Omega)$ of all 
permutations on $\Omega$. 

\smallskip 

Recall that the \textit{rank} of a (finite) monoid $M$ is the minimum size of a generating set of $M$, i.e. 
the minimum of the set $\{|X|\mid \mbox{$X\subseteq M$ and $X$ generates $M$}\}$. 

Let $\Omega$ be a finite set
with at least $3$ elements.
It is well-known that $\Sym(\Omega)$
has rank $2$ (as a semigroup, a monoid or a group) and 
$\T(\Omega)$, $\I(\Omega)$ and $\PT(\Omega)$ have
ranks $3$, $3$ and $4$, respectively.
The survey \cite{Fernandes:2002survey} presents 
these results and similar ones for other classes of transformation monoids,
in particular, for monoids of order-preserving transformations and
for some of their extensions. 
For example, the rank of the extensively studied monoid of all order-preserving transformations of a $n$-chain is $n$,  
which was proved by Gomes and Howie \cite{Gomes&Howie:1992} in 1992. 
More recently, for instance, the papers 
\cite{
Araujo&al:2015,
Fernandes&al:2014,
Fernandes&al:2019,
Fernandes&Quinteiro:2014,
Fernandes&Sanwong:2014} 
are dedicated to the computation of the ranks of certain classes of transformation semigroups or monoids.

\smallskip 

A \textit{monoid presentation} is an ordered pair 
$\langle A\mid R\rangle$, where $A$ is a set, often called an \textit{alphabet}, 
and $R\subseteq A^*\times A^*$ is a set of relations of 
the free monoid $A^*$ generated by $A$. 
A monoid $M$ is said to be 
\textit{defined by a presentation} $\langle A\mid R\rangle$ if $M$ is
isomorphic to $A^*/\rho_R$, where $\rho_R$ denotes the smallest
congruence on $A^*$ containing $R$. 

Given a finite monoid, it is clear that we can always exhibit
a presentation for it, at worst by enumerating all elements from its multiplication table,
but clearly this is of no interest, in general. So, by determining a
presentation for a finite monoid, we mean to find in some sense a
\textit{nice} presentation (e.g. with a small number of generators and
relations).

A presentation for the symmetric group $\Sym(\Omega)$ was determined by Moore \cite{Moore:1897} over a century ago (1897). 
For the full transformation monoid $\T(\Omega)$, a presentation  
was given in 1958 by A\u{\i}zen\v{s}tat \cite{Aizenstat:1958} in terms of a certain 
type of two generator presentation for the symmetric group $\Sym(\Omega)$, 
plus an extra generator and seven more relations. 
Presentations for the partial transformation monoid $\PT(\Omega)$ 
and for the symmetric inverse monoid $\I(\Omega)$
were found by Popova \cite{Popova:1961} in 1961. 
In 1962, A\u{\i}zen\v{s}tat \cite{Aizenstat:1962} and Popova \cite{Popova:1962} exhibited presentations for the monoids of 
all order-preserving transformations and of all order-preserving partial transformations of a finite chain, respectively, and from the sixties until our days several authors obtained presentations for many classes of monoids. 
See also \cite{Ruskuc:1995}, the survey \cite{Fernandes:2002survey} and, 
for example, 
\cite{Cicalo&al:2015,
East:2011, 
Feng&al:2019,
Fernandes:2001, 
Fernandes&Gomes&Jesus:2004, 
Fernandes&Quinteiro:2016, 
Howie&Ruskuc:1995}. 

\medskip 

Now, let $G=(V,E)$ be a finite simple connected graph. 

The (\textit{geodesic}) \textit{distance} between two vertices $x$ and $y$ of $G$, denoted by $\d_G(x,y)$, is the length of a shortest path between $x$ and $y$, i.e. the number of edges in a shortest path between $x$ and $y$. 

Let $\alpha\in\PT(V)$. We say that $\alpha$ is a \textit{partial isometry} or \textit{distance preserving partial transformation} of $G$ if 
$$
\d_G(x\alpha,y\alpha) = \d_G(x,y) ,
$$
for all $x,y\in\dom(\alpha)$. Denote by $\DP(G)$ the subset of $\PT(V)$ of all partial isometries of $G$. Clearly, $\DP(G)$ is a submonoid of $\PT(V)$. Moreover, as a consequence of the property 
$$
\d_G(x,y)=0 \quad \text{if and only if} \quad x=y, 
$$
for all $x,y\in V$, it immediately follows that $\DP(G)\subseteq\I(V)$. In fact, furthermore, we have:

\begin{proposition} \label{injective} 
Let $G=(V,E)$ be a finite simple connected graph. 
Then $\DP(G)$ is an inverse submonoid of $\I(V)$.
\end{proposition}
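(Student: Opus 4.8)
The plan is to leverage what has already been established in the excerpt: $\DP(G)$ is a submonoid of $\PT(V)$ and $\DP(G)\subseteq\I(V)$, so $\DP(G)$ is in fact a submonoid of the inverse monoid $\I(V)$. Recall that a submonoid $N$ of an inverse monoid $M$ is an inverse submonoid precisely when $N$ is closed under the inversion operation of $M$ (then each element of $N$ has its $M$-inverse inside $N$, and idempotents of $N$, being idempotents of $M$, commute). Hence it suffices to prove that $\alpha^{-1}\in\DP(G)$ whenever $\alpha\in\DP(G)$.

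So I would fix $\alpha\in\DP(G)$. Since $\alpha\in\I(V)$, the partial permutation $\alpha^{-1}$ is well defined, with $\dom(\alpha^{-1})=\im(\alpha)$, and for each $z\in\dom(\alpha^{-1})$ there is $z\alpha^{-1}\in\dom(\alpha)$ with $(z\alpha^{-1})\alpha=z$. Then, given $x,y\in\dom(\alpha^{-1})=\im(\alpha)$, set $a=x\alpha^{-1}$ and $b=y\alpha^{-1}$, so that $a,b\in\dom(\alpha)$ with $a\alpha=x$ and $b\alpha=y$; applying the distance-preserving property of $\alpha$ to the pair $a,b$ yields
$$
\d_G(x\alpha^{-1},y\alpha^{-1})=\d_G(a,b)=\d_G(a\alpha,b\alpha)=\d_G(x,y).
$$
Therefore $\alpha^{-1}$ preserves distances, i.e. $\alpha^{-1}\in\DP(G)$, and the proof is complete.

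There is no real obstacle here: the only substantive content is the one-line substitution above, and the rest is bookkeeping — namely, using $\DP(G)\subseteq\I(V)$ (already noted) to make sense of $\alpha^{-1}$ as a partial permutation with domain $\im(\alpha)$, and quoting the characterization of inverse submonoids as inverse-closed submonoids. (One could alternatively argue by exhibiting, for each $\alpha\in\DP(G)$, the element $\alpha^{-1}$ as an inverse of $\alpha$ in $\DP(G)$ and checking that idempotents of $\DP(G)$ commute, but this is just an unpacked version of the same argument.)
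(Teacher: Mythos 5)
Your proposal is correct and follows essentially the same route as the paper: reduce to showing that $\DP(G)$ is closed under inversion in $\I(V)$, then verify that $\alpha^{-1}$ preserves distances via the substitution $\d_G(x\alpha^{-1},y\alpha^{-1})=\d_G((x\alpha^{-1})\alpha,(y\alpha^{-1})\alpha)=\d_G(x,y)$. The only difference is that you spell out the bookkeeping (why inverse-closure suffices) a little more explicitly than the paper does.
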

\begin{proof}
It suffices to show that, for $\alpha\in\DP(G)$,
the inverse transformation $\alpha^{-1}$ (i.e. the inverse of the element $\alpha$ in $\I(V)$) is also a partial isometry of $G$. 

Let $\alpha\in\DP(G)$ and take $x,y\in\dom(\alpha^{-1})$. Then $x\alpha^{-1},y\alpha^{-1}\in\dom(\alpha)$ and so 
$$
\d_G(x\alpha^{-1},y\alpha^{-1}) = \d_G((x\alpha^{-1})\alpha,(y\alpha^{-1})\alpha)=\d_G(x,y),
$$
as required. 
\end{proof} 

Observe that, if $G=(V,E)$ is a complete graph, i.e. $E=\{\{x,y\}\mid x,y\in V, x\neq y\}$, then $\DP(G)=\I(V)$. 

On the other hand, for $n\in\N$, consider the undirected path $P_n$ with $n$ vertices, i.e. 
$$
P_n=\left(\{1,\ldots,n\},\{\{i,i+1\}\mid i=1,\ldots,n-1\}\right).
$$
Then, obviously, $\DP(P_n)$ coincides with the monoid 
$$
\DP_n=\{\alpha\in\I(\{1,2,\ldots,n\} \mid |i\alpha-j\alpha|=|i-j|, \mbox{for all $i,j\in\dom(\alpha)$}\}
$$
of all partial isometries on $\{1,2,\ldots,n\}$. 

The study of partial isometries on $\{1,2,\ldots,n\}$ was initiated 
by Al-Kharousi et al.~\cite{AlKharousi&Kehinde&Umar:2014,AlKharousi&Kehinde&Umar:2016}. 
The first of these two papers is dedicated to investigating some combinatorial properties of 
the monoid $\DP_n$ and of its submonoid $\ODP_n$ of all order-preserving (considering the usual order of $\N$) partial isometries, in particular, their cardinalities. The second paper presents the study of some of their algebraic properties, namely Green's structure and ranks. Presentations for both the monoids $\DP_n$ and $\ODP_n$ were given by the first author and Quinteiro in \cite{Fernandes&Quinteiro:2016}. 

\smallskip 

Now, for $n\in\N$, consider the \textit{star graph} 
$$
S_n=(\{0,1,\ldots, n-1\}, \{\{0,i\}\mid i=1,\ldots,n-1\}) 
$$
with $n$ vertices.  
These very elementary graphs, which are a particular kind of \textit{trees} and also of \textit{complete bipartite graphs}, play a significant role in Graph Theory. For example, through the notions of \textit{star chromatic number} or \textit{star arboricity}. We may also find important applications of star graphs in Computer Science, in particular, in Distributed Computing the \textit{star network} is one of the most common computer network topologies. 

\smallskip 

This paper is devoted to studying the monoid $\mathcal{DP}(S_n)$ of all partial isometries of $S_n$, 
which from now on we denote simply by $\DPS_n$. 
Since we are considering $\{0,1,\ldots, n-1\}$ as the set of vertices of the star $S_n$, then $\DPS_n$ is an inverse submonoid of the symmetric inverse monoid $\I({\{0,1,\ldots,n-1\}})$. 

\smallskip 

In Section \ref{basic properties}, we present some basic properties of $\DPS_n$; in particular, we calculate de cardinal of $\DPS_n$ and describe its Green's relations. 
Section \ref{genrankdps} presents a generating set of $\DPS_n$ of a minimum size, which provides the rank of $\DPS_n$.
Finally, in Section \ref{dpspres} we determine a presentation for the monoid $\DPS_n$. 

\smallskip 

Throughout this paper we will consider $\PT(Y)\subseteq\PT(X)$, 
whenever $X$ and $Y$ are sets such that $Y\subseteq X$. 

\smallskip 

For general background on Semigroup Theory and standard notations, we refer the reader to Howie's book \cite{Howie:1995}.

\smallskip 

We would like to point out that we made considerable use of computational tools, namely GAP \cite{GAP4}.

\section{Basic Properties of $\DPS_n$}
\label{basic properties} 

Let $n\in\N$. 

We start this section by observing that, clearly, 
$$ 
\d_{S_n}(x,y)=\left\{\begin{array}{ll} 
0& \mbox{if $x=y$}\\
1& \mbox{if $x\neq y$ and either $x=0$ or $y=0$} \\
2& \mbox{if $x\neq y$ and $x\neq 0$ and $y\neq 0$}
\end{array}\right.
$$ 
for all $x,y\in\{0,1,\ldots, n-1\}$. 

\medskip 

Observe that 
$$
\DPS_1=\left\{\emptyset, \begin{pmatrix}0\\0\end{pmatrix}\right\} = \I(\{0\})
$$
and
$$
\DPS_2=\left\{\emptyset,\begin{pmatrix}0\\0\end{pmatrix},
\begin{pmatrix}0\\1\end{pmatrix},\begin{pmatrix}1\\0\end{pmatrix},
\begin{pmatrix}1\\1\end{pmatrix},\begin{pmatrix}0&1\\0&1\end{pmatrix},
\begin{pmatrix}0&1\\1&0\end{pmatrix}\right\} = \I(\{0,1\}). 
$$
On the other hand, for $n\geqslant 3$, for example 
$\begin{pmatrix}0&1&2\\1&0&2\end{pmatrix}\not\in\DPS_n$ and so 
$\DPS_n$ is a proper inverse submonoid of the symmetric inverse monoid $\I({\{0,1,\ldots,n-1\}})$. 
Moreover, we have the following description of the elements of $\DPS_n$, which is a routine matter to prove. 

\begin{proposition} \label{dpschar} 
 Let $\alpha \in \PT(\{0,1,\ldots,n-1\})$. 
\begin{enumerate}
   \item If $|\dom(\alpha)|\leqslant 1$ then $\alpha\in\DPS_n$. 
    \item If $|\dom(\alpha)|\geqslant 2$ and $0 \notin \dom(\alpha)$ then the following statements are equivalent:
    \begin{enumerate}
        \item $\alpha \in \DPS_n$;
        \item $\alpha$ is injective and $0\notin \im(\alpha)$;
        \item $\alpha \in \I(\{1,2,\ldots,n-1\})$.
    \end{enumerate}
    \item If $|\dom(\alpha)|=2$ and $0 \in \dom(\alpha)$ then the following statements are equivalent:
    \begin{enumerate}
        \item $\alpha \in \DPS_n$;
        \item $\alpha$ is injective and $0\in \im(\alpha)$.
    \end{enumerate}
    \item If $|\dom(\alpha)|\geqslant 3$ and $0 \in \dom(\alpha)$ then the following statements are equivalent:
    \begin{enumerate}
        \item $\alpha \in \DPS_n$;
        \item $\alpha$ is injective and $0\alpha=0$;
        \item $\alpha|_{\dom(\alpha)\setminus\{0\}}\in \I(\{1,2,\ldots,n-1\})$ and $0\alpha=0$. 
       \end{enumerate}
\end{enumerate}
\end{proposition}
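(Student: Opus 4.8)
The plan is to go through the four cases of the statement using the explicit formula for $\d_{S_n}$ displayed just before the proposition, together with the observation that any $\alpha\in\DPS_n$ is automatically injective (Proposition \ref{injective}, or directly from $\d_{S_n}(x,y)=0 \iff x=y$).

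First, case (1) is immediate: if $|\dom(\alpha)|\leqslant 1$ then the condition $\d_{S_n}(x\alpha,y\alpha)=\d_{S_n}(x,y)$ holds vacuously (there are no two distinct points in the domain, and the $x=y$ case is trivial), so $\alpha\in\DPS_n$.

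Next, for the remaining cases the key elementary fact, read off from the distance table, is that for two \emph{distinct} vertices $u,v$ one has $\d_{S_n}(u,v)=1$ precisely when one of $u,v$ is $0$, and $\d_{S_n}(u,v)=2$ otherwise. So for an injective $\alpha$, the isometry condition on a pair $x\neq y$ in $\dom(\alpha)$ says exactly: $0\in\{x,y\}$ if and only if $0\in\{x\alpha,y\alpha\}$. In case (2), since $0\notin\dom(\alpha)$, every pair $x,y\in\dom(\alpha)$ has $\d_{S_n}(x,y)=2$, so the condition becomes ``$0\notin\{x\alpha,y\alpha\}$ for all such pairs'', i.e. $0\notin\im(\alpha)$; combined with injectivity this is (b), and (b)$\iff$(c) is just the definition of $\I(\{1,\ldots,n-1\})$ as partial permutations avoiding $0$ in domain and image. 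For (a)$\Rightarrow$(b) one uses that $\alpha$ is injective by Proposition \ref{injective}; for (b)$\Rightarrow$(a) one checks the displayed distance identity holds on every pair. In case (3), with $\dom(\alpha)=\{0,a\}$ for some $a\neq 0$, the single nontrivial pair is $\{0,a\}$ with distance $1$, so the isometry condition forces $\d_{S_n}(0\alpha,a\alpha)=1$, i.e. $0\in\{0\alpha,a\alpha\}$; together with injectivity this is equivalent to (b), and conversely (b) gives back distance $1$ on that pair. In case (4), $\dom(\alpha)$ contains $0$ and at least two nonzero vertices $a,b$; applying the pair condition to $\{0,a\}$ and to $\{0,b\}$ shows $0\alpha\in\{0,a\alpha\}\cap\{0,b\alpha\}$, and since $\alpha$ is injective and $a\neq b$ this intersection is $\{0\}$, forcing $0\alpha=0$; then for two nonzero $x,y$ in the domain the condition is automatic since both $\d_{S_n}(x,y)=2$ and $\d_{S_n}(x\alpha,y\alpha)=2$ (the images are nonzero by injectivity and $0\alpha=0$). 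This gives (a)$\Rightarrow$(b); (b)$\Rightarrow$(a) is the same verification in reverse, and (b)$\iff$(c) restates $0\alpha=0$ plus injectivity of the restriction in terms of $\I(\{1,\ldots,n-1\})$.

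There is no real obstacle here — the whole proposition is a case analysis driven by the three-valued distance function, which is why the authors call it ``a routine matter to prove''. The only point requiring a little care is remembering to invoke injectivity (from Proposition \ref{injective}) before translating the distance condition into a statement about whether $0$ lies in the image, since the equivalence ``distance $1$ iff one endpoint is $0$'' presupposes the two endpoints are distinct; one must note that distinct points in the domain have distinct images.
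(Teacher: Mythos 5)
Your proof is correct and is precisely the routine case analysis the paper has in mind (the paper itself omits the proof, declaring it ``a routine matter to prove''): you reduce the isometry condition on each distinct pair to the statement ``$0\in\{x,y\}$ iff $0\in\{x\alpha,y\alpha\}$'' and handle the four cases accordingly, with injectivity supplied by the observation $\d_{S_n}(x,y)=0\iff x=y$. Nothing is missing.
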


\medskip 

Let $n\geqslant 2$. For each $\xi\in \I(\{1,2,\ldots,n-1\})$, define $\bar\xi\in \I(\{0,1,2,\ldots,n-1\})$ by $0\bar\xi=0$ and $\bar\xi|_{\dom(\xi)}=\xi$. 
Taking into account the previous proposition, it is easy to conclude that 
\begin{equation}\label{dpsn}
\begin{array}{rcl}
\DPS_n&=&\{\bar\xi\in \I(\{0,1,2,\ldots,n-1\})\mid \xi\in \I(\{1,2,\ldots,n-1\})\} \cup 
\I(\{1,2,\ldots,n-1\}) \\
&& \cup 
\left\{\begin{pmatrix}0&i\\j&0\end{pmatrix}\mid 1\leqslant i,j\leqslant n-1\right\}\cup 
\left\{\begin{pmatrix}0\\i\end{pmatrix},\begin{pmatrix}i\\0\end{pmatrix} \mid 1\leqslant i\leqslant n-1\right\}.
\end{array}
\end{equation} 
Notice that, all these four subsets of $\DPS_n$ are pairwise disjoint. 

Clearly, 
$$
\{\alpha\in\DPS_n\mid |\dom(\alpha)|=n\}
$$ 
is the group of units of $\DPS_n$ and 
$$
\{\alpha\in\DPS_n\mid \mbox{$0\in\dom(\alpha)$ and $0\alpha=0$}\} = \{\bar\xi\in \I(\{0,1,2,\ldots,n-1\})\mid \xi\in \I(\{1,2,\ldots,n-1\})\} 
$$ 
is a submonoid of $\DPS_n$.  Moreover, for $n\geqslant 3$, this last submonoid of $\DPS_n$ has the same group of units as $\DPS_n$ 
and we have the following relations with 
the symmetric group $\mathcal{S}(\{1,2,\ldots,n-1\})$ of degree $n-1$ 
and the symmetric inverse monoid $\I(\{1,2,\ldots,n-1\})$. 

\begin{proposition}\label{subiso}
For $n\geqslant 3$, the group of units 
$\{\alpha\in\DPS_n\mid |\dom(\alpha)|=n\}$ 
of $\DPS_n$ is isomorphic to the symmetric group 
$\mathcal{S}(\{1,2,\ldots,n-1\})$ of order $n-1$. 
Moreover, 
$\{\alpha\in\DPS_n\mid \mbox{$0\in\dom(\alpha)$ and $0\alpha=0$}\}$ is a submonoid of $\DPS_n$ 
isomorphic to the symmetric inverse monoid 
$\I(\{1,2,\ldots,n-1\})$. 
\end{proposition}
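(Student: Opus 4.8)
The plan is to obtain both statements at once from a single map. Define $\varphi\colon\I(\{1,2,\ldots,n-1\})\to\DPS_n$ by $\xi\varphi=\bar\xi$, where $\bar\xi$ is the partial permutation of $\{0,1,\ldots,n-1\}$ introduced just before the statement (so $0\bar\xi=0$ and $\bar\xi|_{\dom(\xi)}=\xi$). I would show that $\varphi$ is an injective monoid homomorphism whose image is exactly the submonoid $\{\alpha\in\DPS_n\mid 0\in\dom(\alpha)\text{ and }0\alpha=0\}$, and then observe that the restriction of $\varphi$ to permutations gives the first assertion essentially for free.

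First, $\varphi$ is well defined: by Proposition~\ref{dpschar} (parts 1 and 4), or directly from the description~(\ref{dpsn}), each $\bar\xi$ belongs to $\DPS_n$. It is a monoid homomorphism: the identity of $\I(\{1,2,\ldots,n-1\})$ is sent to the identity transformation of $\{0,1,\ldots,n-1\}$, which is the identity of $\DPS_n$; and for $\xi,\eta\in\I(\{1,2,\ldots,n-1\})$ one checks $\overline{\xi\eta}=\bar\xi\,\bar\eta$ by a short domain computation, noting that $0\bar\xi=0\in\dom(\bar\eta)$ forces $0\in\dom(\bar\xi\bar\eta)$ with $0(\bar\xi\bar\eta)=0$, while for $x\neq 0$ one has $x\in\dom(\bar\xi\bar\eta)$ if and only if $x\in\dom(\xi)$ and $x\xi\in\dom(\eta)$, i.e. if and only if $x\in\dom(\xi\eta)$, the values then coinciding. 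Injectivity is immediate, since $\xi=\bar\xi|_{\dom(\bar\xi)\setminus\{0\}}$. For the image, one inclusion is clear because every $\bar\xi$ fixes $0$ and has $0$ in its domain; for the reverse, given $\alpha\in\DPS_n$ with $0\in\dom(\alpha)$ and $0\alpha=0$, the case $|\dom(\alpha)|\leqslant 2$ is checked directly and, when $|\dom(\alpha)|\geqslant 3$, Proposition~\ref{dpschar}(4)(c) gives $\alpha|_{\dom(\alpha)\setminus\{0\}}\in\I(\{1,2,\ldots,n-1\})$, whence $\alpha=\overline{\alpha|_{\dom(\alpha)\setminus\{0\}}}$ lies in the image of $\varphi$. (This reverse inclusion is also exactly the content of the two displayed equalities following~(\ref{dpsn}).) This establishes the second assertion.

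For the first assertion, I would simply note that $\bar\xi$ is a permutation of $\{0,1,\ldots,n-1\}$, equivalently $|\dom(\bar\xi)|=n$, if and only if $\dom(\xi)=\{1,2,\ldots,n-1\}$, i.e. if and only if $\xi\in\Sym(\{1,2,\ldots,n-1\})$; hence $\varphi$ restricts to an isomorphism from $\Sym(\{1,2,\ldots,n-1\})$ onto the group of units $\{\alpha\in\DPS_n\mid|\dom(\alpha)|=n\}$ of $\DPS_n$ (a group of order $(n-1)!$). No genuine obstacle arises here; the only mildly delicate point is the bookkeeping in the verification that $\varphi$ preserves composition — in particular, tracking how the domain of a product of partial permutations is formed — and one should keep in mind that the hypothesis $n\geqslant 3$ is used only to guarantee that the two submonoids in the statement are genuinely distinct and that $0\alpha=0$ (rather than merely $0\in\im(\alpha)$) is forced on the elements of domain size at least $3$.
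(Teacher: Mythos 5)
Your proposal is correct and follows essentially the same route as the paper: the paper's proof also defines the map $\xi\mapsto\bar\xi$, notes that it is an injective monoid homomorphism, and identifies its image and the image of its restriction to $\mathcal{S}(\{1,2,\ldots,n-1\})$ with the two submonoids in question (the paper simply leaves the verifications you spell out as ``easy to check''). No differences of substance.
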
 
\begin{proof}
Consider the mapping $\psi: \I(\{1,2,\ldots,n-1\}) \longrightarrow\I(\{0,1,2,\ldots,n-1\})$ defined by $\xi\psi=\bar\xi$, for all $\xi\in\I(\{1,2,\ldots,n-1\})$. 
It is easy to check that $\psi$ is an injective homomorphism of monoids and, clearly, 
we have 
$$
\I(\{1,2,\ldots,n-1\})\psi=\{\alpha\in\DPS_n\mid \mbox{$0\in\dom(\alpha)$ and $0\alpha=0$}\}
$$
and 
$$
\mathcal{S}(\{1,2,\ldots,n-1\})\psi=\{\alpha\in\DPS_n\mid |\dom(\alpha)|=n\},  
$$
which proves the result. 
\end{proof} 

\medskip 

Next, recall that, for a finite set $\Omega$ with $n\in\N$ elements, it is well known that 
the size of the symmetric inverse monoid $\I(\Omega)$ is 
$$
|\I(\Omega)| = \sum_{k=0}^{n} \binom{n}{k}^2k! 
$$
(see \cite{Fernandes:2002survey,Howie:1995}). Therefore, from (\ref{dpsn}) and Proposition \ref{subiso}, we have 
$$
\begin{array}{rcl}
|\DPS_n|&=&|\{\bar\xi\in \I(\{0,1,2,\ldots,n-1\})\mid \xi\in \I(\{1,2,\ldots,n-1\})\}| +  
|\I(\{1,2,\ldots,n-1\})| \\
&& +
\left|\left\{\begin{pmatrix}0&i\\j&0\end{pmatrix}\mid 1\leqslant i,j\leqslant n-1\right\}\right| +  
\left|\left\{\begin{pmatrix}0\\i\end{pmatrix},\begin{pmatrix}i\\0\end{pmatrix} \mid 1\leqslant i\leqslant n-1\right\}\right| \\
& = &\displaystyle \sum_{k=0}^{n-1} \binom{n-1}{k}^2k! + \sum_{k=0}^{n-1} \binom{n-1}{k}^2k! + (n-1)^2 + 2(n-1) \\
& = &\displaystyle 1+n^2+ 2 \sum_{k=1}^{n-1} \binom{n-1}{k}^2k!
\end{array}
$$
and so we have proved: 

\begin{theorem}
For all $n\in\N$, 
$$
|\DPS_n|=1+n^2+ 2 \sum_{k=1}^{n-1} \binom{n-1}{k}^2k!.
$$ 
\end{theorem}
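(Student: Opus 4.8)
The plan is to read off $|\DPS_n|$ directly from the description (\ref{dpsn}), which exhibits $\DPS_n$ as a union of four pairwise disjoint subsets (the disjointness being exactly the remark made immediately after (\ref{dpsn})). First I would compute the size of each of the four pieces. By Proposition \ref{subiso} the assignment $\xi\mapsto\bar\xi$ is injective, so $\{\bar\xi\mid \xi\in\I(\{1,2,\ldots,n-1\})\}$ is in bijection with $\I(\{1,2,\ldots,n-1\})$; the second piece is $\I(\{1,2,\ldots,n-1\})$ itself and hence has the same size; the third piece $\left\{\begin{pmatrix}0&i\\j&0\end{pmatrix}\mid 1\leqslant i,j\leqslant n-1\right\}$ has exactly $(n-1)^2$ elements, one for each pair $(i,j)$; and the fourth piece $\left\{\begin{pmatrix}0\\i\end{pmatrix},\begin{pmatrix}i\\0\end{pmatrix}\mid 1\leqslant i\leqslant n-1\right\}$ has $2(n-1)$ elements.

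Next I would invoke the well-known formula $|\I(\Omega)|=\sum_{k=0}^{m}\binom{m}{k}^2k!$ for an $m$-element set, applied with $m=n-1$, to obtain $|\I(\{1,2,\ldots,n-1\})|=\sum_{k=0}^{n-1}\binom{n-1}{k}^2k!$. Summing the four contributions gives $|\DPS_n|=2\sum_{k=0}^{n-1}\binom{n-1}{k}^2k!+(n-1)^2+2(n-1)$, and splitting off the $k=0$ term (which equals $1$) from each of the two sums together with the elementary identity $(n-1)^2+2(n-1)+2=n^2+1$ yields the claimed closed form. Since (\ref{dpsn}) is available only for $n\geqslant 2$, I would finish by checking $n=1$ separately: $\DPS_1=\I(\{0\})$ has two elements, which matches the formula (empty sum and $1+1^2=2$).

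I do not anticipate any genuine obstacle here: once (\ref{dpsn}) and the injectivity of $\xi\mapsto\bar\xi$ furnished by Proposition \ref{subiso} are in hand, the whole argument reduces to a bookkeeping computation followed by a routine algebraic simplification. The only point that deserves a moment's attention is confirming that the four listed subsets are indeed pairwise disjoint, so that their cardinalities add without overcounting; but, as noted above, this is precisely the observation recorded right after (\ref{dpsn}).
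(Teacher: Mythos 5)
Your proposal is correct and follows essentially the same route as the paper: counting the four pairwise disjoint pieces of the decomposition~(\ref{dpsn}), applying the standard formula for $|\I(\Omega)|$ with $|\Omega|=n-1$, and simplifying. The separate verification of the case $n=1$ is a careful touch the paper leaves implicit, but the argument is otherwise identical.
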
 

The table below gives us an idea of the size of $\DPS_n$. 
\begin{center}
$\begin{array}{|c|c|}\cline{1-2}
n & |\DPS_n|  \\ \cline{1-2}  
1 & 2 \\ \cline{1-2}
2 &  7 \\ \cline{1-2}
3 &  22 \\ \cline{1-2}
4 &  83 \\ \cline{1-2}
5  &  442 \\ \cline{1-2}
6  &  3127 \\ \cline{1-2}
7  &  26702 \\ \cline{1-2}
8  &  261907 \\ \cline{1-2}
9  &  2883538 \\ \cline{1-2}
10  &  35144327 \\ \cline{1-2}
\end{array}$ \quad
$\begin{array}{|c|c|}\cline{1-2}
n &  |\DPS_n| \\ \cline{1-2}
11 & 469324582 \\ \cline{1-2}
12 &  6810715507 \\ \cline{1-2}
13 &  106668909002  \\ \cline{1-2}
14 &  1792648617463 \\ \cline{1-2}
15 &  32167115690782 \\ \cline{1-2}
16 &  613654341732467 \\ \cline{1-2}
17 &  12399337905055522 \\ \cline{1-2}
18 &   264481977288432007 \\ \cline{1-2}
19 &   5937942527822578358 \\ \cline{1-2}
20 &  139949655415806098707 \\ \cline{1-2}
\end{array}$
\end{center}

\medskip 

In the rest of this section we will describe Green's relations of $\DPS_n$. 
Remember that, given a set $\Omega$ and an inverse submonoid $M$ of $\I(\Omega)$, it is well known that 
the Green's relations $\mathscr{L}$, $\mathscr{R}$ and $\mathscr{H}$
of $M$ can be described as following: for $\alpha, \beta \in M$,
\begin{itemize}
\item $\alpha \mathscr{L} \beta$ if and only if $\im(\alpha) = \im(\beta)$;

\item $\alpha \mathscr{R} \beta$ if and only if $\dom(\alpha) = \dom(\beta)$;

\item $\alpha \mathscr{H} \beta $ if and only if $\im(\alpha) = \im(\beta)$ and $\dom(\alpha) = \dom(\beta)$.
\end{itemize}
In $\I(\Omega)$ we also have 
\begin{itemize}
\item $\alpha \mathscr{J} \beta$ if and only if $|\dom(\alpha)| = |\dom(\beta)|$ (if and only if $|\im(\alpha)| = |\im(\beta)|$). 
\end{itemize}

Since $\DPS_n$ is an inverse submonoid of $\I(\{0,1,2,\ldots,n-1\})$, 
it remains to find a description of its Green's relation $\mathscr{J}$. 
Recall that, for a finite monoid, we have $\mathscr{J}=\mathscr{D} \;(=\mathscr{L}\circ\mathscr{R}=\mathscr{R}\circ\mathscr{L}$). 

\begin{theorem} 
Let $\alpha, \beta \in \DPS_n$. Then $\alpha \mathscr{J} \beta$ if and only if one of the following properties is satisfied:
\begin{enumerate}
\item $|\dom(\alpha)|=|\dom(\beta)|=1$;
\item $|\dom(\alpha)|=|\dom(\beta)|$ and $0\notin\dom(\alpha)\cup\dom(\beta)$;
\item $|\dom(\alpha)|=|\dom(\beta)|$ and $0\in\dom(\alpha)\cap\im(\beta)$.
\end{enumerate}
\end{theorem}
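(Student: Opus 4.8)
The plan is to use the fact, recalled just above, that $\mathscr{J}=\mathscr{D}=\mathscr{R}\circ\mathscr{L}$ in the finite inverse monoid $\DPS_n$. Combining this with the descriptions of $\mathscr{R}$ and $\mathscr{L}$ on an inverse submonoid of a symmetric inverse monoid, one gets the working criterion: for $\alpha,\beta\in\DPS_n$, we have $\alpha\mathscr{J}\beta$ if and only if there exists $\gamma\in\DPS_n$ with $\dom(\gamma)=\dom(\alpha)$ and $\im(\gamma)=\im(\beta)$ (i.e. $\alpha\mathscr{R}\gamma\mathscr{L}\beta$). Thus the theorem reduces to deciding, for prescribed sets $A,B\subseteq\{0,1,\ldots,n-1\}$ with $|A|=|B|$, whether some element of $\DPS_n$ has domain $A$ and image $B$; and for this the characterization of the elements of $\DPS_n$ in Proposition \ref{dpschar} (equivalently, the listing (\ref{dpsn})) is exactly what is needed.

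The key preliminary observation I would isolate first is: if $\delta\in\DPS_n$ and $|\dom(\delta)|\geqslant 2$, then $0\in\dom(\delta)$ if and only if $0\in\im(\delta)$. Indeed, by parts (3) and (4) of Proposition \ref{dpschar}, $0\in\dom(\delta)$ forces $0\in\im(\delta)$; applying the same to $\delta^{-1}\in\DPS_n$ (which has $\dom(\delta^{-1})=\im(\delta)$ and $\im(\delta^{-1})=\dom(\delta)$, by Proposition \ref{injective}) gives the converse. This little lemma is what makes the conditions of the theorem — although phrased asymmetrically with $\dom(\alpha)$ and $\im(\beta)$ — behave symmetrically in $\alpha$ and $\beta$, and it is the only point on which the argument genuinely turns; everything else is bookkeeping over the cardinality of the domain, the one real subtlety being that the degenerate cardinalities $0$ and $1$ must be separated off (this is why clause (1) is listed apart from (2) and (3)).

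For sufficiency, given $\alpha,\beta$ satisfying (1), (2) or (3), I would exhibit a suitable $\gamma$. If the common cardinality of $\dom(\alpha)$ and $\dom(\beta)$ is $0$, or it equals $1$ (clause (1)), the required $\gamma$ (the empty map, respectively the unique partial transformation with domain $\dom(\alpha)$ and image $\im(\beta)$) lies in $\DPS_n$ by Proposition \ref{dpschar}(1). If the common cardinality $k$ is at least $2$ and clause (2) holds, then $0\notin\dom(\alpha)$ and, by the lemma, $0\notin\im(\beta)$, so any bijection $\gamma\colon\dom(\alpha)\to\im(\beta)$ is injective with $0\notin\im(\gamma)$, hence lies in $\DPS_n$ by Proposition \ref{dpschar}(2). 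If $k\geqslant 2$ and clause (3) holds, then $0\in\dom(\alpha)$ and, by the lemma, $0\in\im(\beta)$; I take $\gamma$ fixing $0$ and mapping $\dom(\alpha)\setminus\{0\}$ bijectively onto $\im(\beta)\setminus\{0\}$, which belongs to $\DPS_n$ by Proposition \ref{dpschar}(3) if $k=2$ and by Proposition \ref{dpschar}(4) if $k\geqslant 3$. For necessity, suppose $\gamma\in\DPS_n$ satisfies $\dom(\gamma)=\dom(\alpha)$ and $\im(\gamma)=\im(\beta)$; then $|\dom(\alpha)|=|\dom(\gamma)|=|\im(\gamma)|=|\im(\beta)|=|\dom(\beta)|$. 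If this common cardinality is $0$ or $1$ we are in clause (2) or (1); otherwise it is $\geqslant 2$, the lemma applied to $\gamma$ gives $0\in\dom(\alpha)\iff 0\in\im(\beta)$, and then, applying the lemma once more to $\beta$, we land in clause (2) when neither holds and in clause (3) when both do.
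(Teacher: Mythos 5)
Your proof is correct, and the necessity half follows the paper's route exactly: invoke $\mathscr{J}=\mathscr{D}=\mathscr{R}\circ\mathscr{L}$ together with the domain/image descriptions of $\mathscr{R}$ and $\mathscr{L}$ to obtain an intermediate element, then case-split on the common cardinality using Proposition \ref{dpschar}. Where you genuinely diverge is in the sufficiency half and in how you organise the bookkeeping. The paper proves sufficiency piecemeal: for rank $\leqslant 2$ it exhibits explicit elements $\zeta_1,\zeta_2$ with $\alpha=\zeta_1\beta\zeta_2$ and $\beta=\zeta_1^{-1}\alpha\zeta_2^{-1}$ (i.e.\ it verifies the ideal-theoretic definition of $\mathscr{J}$ directly), while for ranks $\geqslant 2$ in clause (2) and $\geqslant 3$ in clause (3) it transfers $\mathscr{J}$-relatedness from the symmetric inverse monoid $\I(\{1,\ldots,n-1\})$ through the embedding $\psi$ of Proposition \ref{subiso}. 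You instead apply the single criterion ``$\alpha\mathscr{J}\beta$ iff some $\gamma\in\DPS_n$ has $\dom(\gamma)=\dom(\alpha)$ and $\im(\gamma)=\im(\beta)$'' in both directions, so sufficiency reduces to constructing one witness $\gamma$ per case, which Proposition \ref{dpschar} certifies immediately. You also isolate as a standalone lemma the fact that for $|\dom(\delta)|\geqslant 2$ one has $0\in\dom(\delta)\iff 0\in\im(\delta)$ (via $\delta^{-1}$ and Proposition \ref{injective}); the paper uses this fact repeatedly but only implicitly, re-deriving it inside each case. The net effect is that your argument is more uniform and symmetric in $\alpha$ and $\beta$, at the cost of not displaying the concrete multiplier elements that the paper's version provides. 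Both are complete; no gaps.
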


\begin{proof} 
We begin by supposing that $\alpha \mathscr{J} \beta$. Then, as remembered above, 
we have $|\dom(\alpha)| = |\dom(\beta)|=|\im(\alpha)| = |\im(\beta)|$. 
On the other hand, since $\mathscr{J}=\mathscr{D}=\mathscr{R}\circ\mathscr{L}$, 
then there exists $\zeta\in\DPS_n$ such that $\alpha\mathscr{R}\zeta$ and $\zeta\mathscr{L}\beta$. Consequently, 
we have $\dom(\alpha)=\dom(\zeta)$ and $\im(\zeta)=\im(\beta)$. 

If $|\dom(\alpha)|=|\dom(\beta)|=0$ then $\alpha=\beta=\emptyset$ and so $0\not\in\dom(\alpha)\cup\dom(\beta)$. 
Hence, Property 2 is satisfied.

If $|\dom(\alpha)|=|\dom(\beta)|=1$ then it is immediate that Property 1 is satisfied.

Next, suppose that $|\dom(\alpha)|=|\dom(\beta)|\geqslant 2$ and $0\notin\dom(\alpha)$. Then $0\not\in\dom(\zeta)$ and so, by Proposition \ref{dpschar}, $0\notin\im(\zeta)=\im(\beta)$, whence $0\notin\dom(\beta)$, 
again by Proposition \ref{dpschar}. Hence, Property 2 is satisfied.

Now, suppose that $|\dom(\alpha)|=|\dom(\beta)| = 2$ and $0\in\dom(\alpha)$. Then $0\in\dom(\zeta)$, which implies, by Proposition \ref{dpschar}, that $0\in\im(\zeta)=\im(\beta)$ and so, again by Proposition \ref{dpschar}, we have  $0\in\dom(\beta)$. Hence, Property 3 holds.

Finally, suppose that $|\dom(\alpha)|=|\dom(\beta)|\geqslant 3$ and $0\in\dom(\alpha)=\dom(\zeta)$. Then, by Proposition \ref{dpschar}, we conclude that $0\zeta=0$, whence $0\in\im(\zeta)=\im(\beta)$. Then, once again by Proposition \ref{dpschar}, we deduce that $0\in\dom(\beta)$. Hence, Property 3 holds.

\smallskip 

We now prove the converse implication.

\smallskip 

First, suppose that Property 1 is verified, i.e. $|\dom(\alpha)|=|\dom(\beta)|=1$. 
Let $i,j\in\{0,1,\ldots,n-1\}$ be such that $\dom(\alpha)=\{i\}$ and $\dom(\beta)=\{j\}$. 
Then
$$
\zeta_1=\begin{pmatrix}i\\j\end{pmatrix}, \quad \zeta_2=\begin{pmatrix}j\beta\\i\alpha\end{pmatrix}, \quad \zeta_3=\begin{pmatrix}j\\i\end{pmatrix} \quad\textrm{and}\quad \zeta_4=\begin{pmatrix}i\alpha\\j\beta\end{pmatrix}
$$
are isometries of $S_n$ and, clearly, $\alpha=\zeta_1\beta\zeta_2$ and $\beta=\zeta_3\alpha\zeta_4$, 
whence $\alpha\mathscr{J}\beta$.

\smallskip 

Next, we suppose that Property 2 holds, i.e. $|\dom(\alpha)|=|\dom(\beta)|$ and $0\notin\dom(\alpha)\cup\dom(\beta)$.

If $|\dom(\alpha)|=|\dom(\beta)|=0$ then $\alpha=\beta=\emptyset$, whence $\alpha\mathscr{J}\beta$.

If $|\dom(\alpha)|=|\dom(\beta)|=1$ then Property 1 is also verified and, as proved above, we have $\alpha\mathscr{J}\beta$.

Now, assume that $|\dom(\alpha)|=|\dom(\beta)|\geqslant 2$. 
Then, by Proposition \ref{dpschar}, we get $\alpha,\beta \in \I(\{1,2,\ldots,n-1\})$. 
Hence, from $|\dom(\alpha)|=|\dom(\beta)|$, we obtain $\alpha\mathscr{J}\beta$ in $\I(\{1,2,\ldots,n-1\})$ and thus 
we also have $\alpha\mathscr{J}\beta$ in $\DPS_n$. 

\smallskip 

Finally, suppose Property 3 is verified, i.e. $|\dom(\alpha)|=|\dom(\beta)|$ and $0\in\dom(\alpha)\cap\dom(\beta)$.

If $|\dom(\alpha)|=|\dom(\beta)|=1$ then Property 1 is also verified and, again as proved above, we have $\alpha\mathscr{J}\beta$.

Next, assume that $|\dom(\alpha)|=|\dom(\beta)|=2$. 
Then, by Proposition \ref{dpschar}, we also have $0\in\im(\alpha)\cap\im(\beta)$.

Let $i,j\in\{1,\ldots,n-1\}$ be such that $\dom(\alpha)=\{0,i\}$ and $\dom(\beta)=\{0,j\}$. 

Define $\zeta_1,\zeta_2\in\mathcal{PT}_n$ by 
$$
\dom(\zeta_1)=\dom(\alpha), \quad 0\zeta_1=0 \quad\textrm{and}\quad i\zeta_1=j
$$
and 
$$
\dom(\zeta_2)=\im(\beta), \quad (0\beta)\zeta_2=0\alpha \quad\textrm{and}\quad (j\beta)\zeta_2=i\alpha.
$$
It is easy to conclude that $\zeta_1,\zeta_2\in\DPS_n$, $\alpha=\zeta_1\beta\zeta_2$ and  
$\beta=\zeta_1^{-1}\alpha\zeta_2^{-1}$, whence $\alpha\mathscr{J}\beta$.

Finally, consider that $|\dom(\alpha)|=|\dom(\beta)|\geqslant 3$. Then, by Proposition \ref{dpschar}, we have 
$$
\alpha|_{\dom(\alpha)\setminus\{0\}},\beta|_{\dom(\beta)\setminus\{0\}}\in \I(\{1,2,\ldots,n-1\})
$$ 
and $0\alpha=0\beta=0$. 
As $|\dom(\alpha|_{\dom(\alpha)\setminus\{0\}})|=|\dom(\beta|_{\dom(\beta)\setminus\{0\}})|$, we get 
$\alpha|_{\dom(\alpha)\setminus\{0\}}\,\mathscr{J}\,\beta|_{\dom(\beta)\setminus\{0\}}$ in $\I(\{1,2,\ldots,n-1\})$ 
and so, in view of the proof of Proposition \ref{subiso}, we conclude that 
$$
\alpha =(\alpha|_{\dom(\alpha)\setminus\{0\}})\psi\,\mathscr{J}\,(\beta|_{\dom(\beta)\setminus\{0\}})\psi=\beta
$$ 
in $\DPS_n$, as required. 
\end{proof}

\section{Generators and Rank of $\DPS_n$}
\label{genrankdps}

Let $\Omega$ be a finite set with $n\in\N$ elements. It is well known that the symmetric inverse monoid $\I(\Omega)$ is 
generated by its group of units, i.e. the symmetric group $\mathcal{S}(\Omega)$ of degree $n$, and any one transformation of rank $n-1$ 
(see \cite{Fernandes:2002survey,Howie:1995}). For instance, for $n\geqslant2$, the symmetric inverse monoid $\I(\{1,2,\ldots,n\})$ 
is generated by the following transformations 
$$
\begin{pmatrix}1&2&\cdots&n-1&n\\2&3&\cdots&n&1\end{pmatrix}, \quad 
\begin{pmatrix}1&2&3&\cdots&n\\2&1&3&\cdots&n\end{pmatrix}\quad\text{and}\quad 
\begin{pmatrix}1&2&\cdots&n-1\\1&2&\cdots&n-1\end{pmatrix}. 
$$ 
Notice that, in particular for $n=2$, the first two previous transformations coincide and so we simply obtain   
$\I(\{1,2\})=\left\langle\begin{pmatrix}1&2\\2&1\end{pmatrix},\begin{pmatrix}1\\1\end{pmatrix}\right\rangle$. 

\medskip

In this section we will show that $\DPS_n$ has rank $3$, for $n=3$, and rank $5$, for $n\geqslant4$, by exhibiting a set of generators with a minimum number of elements. Recall that, we already observed that $\DPS_1=\I(\{0\})$ and $\DPS_2=\I(\{0,1\})$, which are monoids with ranks $1$ and $2$, respectively. 

\medskip 

Let $n\geqslant 3$ and consider the following partial isometries of $S_n$:
$$
\alpha_1=\begin{pmatrix}0&1&2&\cdots&n-2&n-1\\0&2&3&\cdots&n-1&1\end{pmatrix}, \quad 
\alpha_2=\begin{pmatrix}0&1&2&3&\cdots&n-1\\0&2&1&3&\cdots&n-1\end{pmatrix}, \quad
\beta_1=\begin{pmatrix}0&1&\cdots&n-2\\0&1&\cdots&n-2\end{pmatrix}, 
$$
$$
\beta_2=\begin{pmatrix}1&2&\cdots&n-1\\1&2&\cdots&n-1\end{pmatrix} \quad\text{and}\quad 
\gamma=\begin{pmatrix}0&1\\1&0\end{pmatrix}.
$$

Next, we show that these transformations generate $\DPS_n$. 

\begin{proposition}\label{dpsgen}
Let $n\geqslant 3$. Then  $\DPS_n=\langle\alpha_1,\alpha_2,\beta_1,\beta_2,\gamma\rangle$. 
Moreover, in particular, $\DPS_3=\langle\alpha_1,\beta_2,\gamma\rangle$.
\end{proposition}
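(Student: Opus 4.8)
The plan is to show that the five listed partial isometries generate $\DPS_n$, using the decomposition \eqref{dpsn} of $\DPS_n$ into four disjoint pieces, and then to treat the case $n=3$ separately.

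First I would dispose of the group of units. By Proposition~\ref{subiso} it is isomorphic to $\mathcal{S}(\{1,2,\ldots,n-1\})$; under the isomorphism $\psi$, the elements $\alpha_1$ and $\alpha_2$ correspond respectively to the $(n-1)$-cycle $(1\ 2\ \cdots\ n-1)$ and the transposition $(1\ 2)$ on $\{1,\ldots,n-1\}$, which are Moore's classical generators of $\mathcal{S}(\{1,\ldots,n-1\})$. Hence $\langle\alpha_1,\alpha_2\rangle$ is exactly the group of units of $\DPS_n$. Next I would bring in $\beta_2=\bar{1}_{\{1,\ldots,n-1\}}$, the identity on $\{1,\ldots,n-1\}$ with $0$ fixed: multiplying the group of units on one side by suitable conjugates of $\beta_2$ produces all idempotents $\bar\varepsilon$ with $\varepsilon\in\I(\{1,\ldots,n-1\})$, and since $\I(\{1,\ldots,n-1\})$ is generated by $\mathcal{S}(\{1,\ldots,n-1\})$ together with any one rank-$(n-2)$ partial identity, $\langle\alpha_1,\alpha_2,\beta_2\rangle=\{\bar\xi\mid\xi\in\I(\{1,\ldots,n-1\})\}$ — i.e. the whole submonoid of elements fixing $0$. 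This recovers the first of the four subsets in \eqref{dpsn}.

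Then I would pick up the remaining three subsets. The element $\beta_1$ is the partial identity on $\{0,1,\ldots,n-2\}$, so $\beta_1$ together with the already-generated part gives, by composition with group-of-units elements, all partial identities on subsets of $\{1,\ldots,n-1\}$ (dropping $0$ from the domain), and composing these with elements of $\I(\{1,\ldots,n-1\})$ inside $\DPS_n$ yields all of $\I(\{1,\ldots,n-1\})$ viewed as a submonoid — the second subset. To reach the ``swap'' elements $\binom{0\ i}{j\ 0}$ I would write $\gamma=\binom{0\ 1}{1\ 0}$ and observe that pre- and post-composing $\gamma$ with units from $\langle\alpha_1,\alpha_2\rangle$ moves the $1$ to any $i$ and the image $1$ to any $j$, giving every $\binom{0\ i}{j\ 0}$; restricting domain or image via $\beta_1,\beta_2$ then produces the rank-one maps $\binom{0}{i}$ and $\binom{i}{0}$, the fourth subset. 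Since all four subsets of \eqref{dpsn} are now in the generated submonoid, and they exhaust $\DPS_n$, we conclude $\DPS_n=\langle\alpha_1,\alpha_2,\beta_1,\beta_2,\gamma\rangle$.

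For the case $n=3$ I would simply check that $\alpha_2$, $\beta_1$ can be recovered from $\alpha_1,\beta_2,\gamma$: here $\{1,\ldots,n-1\}=\{1,2\}$, so $\mathcal{S}(\{1,2\})$ is generated by the single transposition, which is $\alpha_1$ (as $\alpha_1=\binom{0\ 1\ 2}{0\ 2\ 1}$ when $n=3$), making $\alpha_2$ redundant; and $\beta_1=\binom{0\ 1}{0\ 1}$ should be expressible as a short product in $\alpha_1,\beta_2,\gamma$ (for instance via $\gamma\beta_2\gamma$ or a similar word, which I would verify by direct computation). The main obstacle I anticipate is not any single deep step but the bookkeeping: one must be careful that compositions stay inside $\DPS_n$ (Proposition~\ref{dpschar} constrains which partial maps are isometries of $S_n$) and that, when generating $\I(\{1,\ldots,n-1\})$ as a submonoid, one is invoking the standard generation result for the symmetric inverse monoid correctly with the base point $0$ handled consistently. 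Verifying the minimality of the generating sets — that the rank is genuinely $3$ for $n=3$ and $5$ for $n\ge 4$ — is deferred to later results and is not needed here.
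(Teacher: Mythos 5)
Your overall architecture is the right one and, modulo one concrete error, coincides with the paper's: first generate the submonoid $\{\alpha\in\DPS_n\mid 0\in\dom(\alpha),\ 0\alpha=0\}\cong\I(\{1,\ldots,n-1\})$, then the copy of $\I(\{1,\ldots,n-1\})$ whose elements omit $0$, and finally reach the swaps $\left(\begin{smallmatrix}0&i\\ j&0\end{smallmatrix}\right)$ and the rank-one maps through $0$ by sandwiching $\gamma$ between already-obtained elements. The $\gamma$-step and the rank-one step are fine.

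The genuine problem is that you have interchanged the roles of $\beta_1$ and $\beta_2$, apparently because you misread $\beta_2$. You describe $\beta_2$ as ``$\bar 1_{\{1,\ldots,n-1\}}$, the identity on $\{1,\ldots,n-1\}$ with $0$ fixed''; in fact $\beta_2=\left(\begin{smallmatrix}1&\cdots&n-1\\ 1&\cdots&n-1\end{smallmatrix}\right)$ does \emph{not} contain $0$ in its domain, whereas $\beta_1=\left(\begin{smallmatrix}0&\cdots&n-2\\ 0&\cdots&n-2\end{smallmatrix}\right)=\bar\varepsilon$ for $\varepsilon$ the rank-$(n-2)$ partial identity of $\I(\{1,\ldots,n-1\})$. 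Consequently both of your first two generation claims fail as written: $\langle\alpha_1,\alpha_2,\beta_2\rangle$ is not the $0$-fixing submonoid, since any product involving $\beta_2$ has domain contained in $\{1,\ldots,n-1\}$ and so this subsemigroup contains no $\bar\xi$ with $\xi$ of rank less than $n-1$; and composing $\beta_1$ with units can never ``drop $0$ from the domain,'' since every element of $\langle\alpha_1,\alpha_2,\beta_1\rangle$ has $0$ in its domain and fixes it. The repair is exactly the swap: $\langle\alpha_1,\alpha_2,\beta_1\rangle=\{\bar\xi\mid\xi\in\I(\{1,\ldots,n-1\})\}$ via $\psi$ and the standard generators of the symmetric inverse monoid, and then right multiplication by $\beta_2$ (e.g.\ $\bar\xi\beta_2=\xi$) yields all of $\I(\{1,\ldots,n-1\})$ --- which is essentially how the paper proceeds, by checking that $\alpha_1\beta_2,\alpha_2\beta_2,\beta_1\beta_2$ are the standard generators of $\I(\{1,\ldots,n-1\})$. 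A smaller point: for $n=3$ your candidate word $\gamma\beta_2\gamma$ equals $\left(\begin{smallmatrix}0\\ 0\end{smallmatrix}\right)$, not $\beta_1$; the identity you want is $\beta_1=\gamma^2$ (together with $\alpha_1=\alpha_2$), as in the paper.
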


\begin{proof}
First of all, notice that 
$\alpha_1,\alpha_2,\beta_1\in 
\{\alpha\in\DPS_n\mid \mbox{$0\in\dom(\alpha)$ and $0\alpha=0$}\} = \I(\{1,2,\ldots,n-1\})\psi$, 
where $\psi$ is the injective homomorphism of monoids defined in the proof of Proposition \ref{subiso}, 
$$
\alpha_1\beta_2=\begin{pmatrix}1&2&\cdots&n-2&n-1\\2&3&\cdots&n-1&1\end{pmatrix}, \quad 
\alpha_2\beta_2=\begin{pmatrix}1&2&3&\cdots&n-1\\2&1&3&\cdots&n-1\end{pmatrix}, \quad
\beta_1\beta_2=\begin{pmatrix}1&\cdots&n-2\\1&\cdots&n-2\end{pmatrix}, 
$$
$(\alpha_1\beta_2)\psi=\alpha_1$, $(\alpha_2\beta_2)\psi=\alpha_2$ and $(\beta_1\beta_2)\psi=\beta_1$, 
whence $\langle\alpha_1\beta_2,\alpha_2\beta_2,\beta_1\beta_2\rangle=\I(\{1,2,\ldots,n-1\})$ 
and $\langle\alpha_1,\alpha_2,\beta_1\rangle=\{\alpha\in\DPS_n\mid \mbox{$0\in\dom(\alpha)$ and $0\alpha=0$}\}$. 
Therefore
\begin{equation}\label{almostall} 
\{\alpha\in\DPS_n\mid \mbox{$0\in\dom(\alpha)$ and $0\alpha=0$}\}\cup \I(\{1,2,\ldots,n-1\}) 
\subseteq \langle\alpha_1,\alpha_2,\beta_1,\beta_2\rangle
\end{equation}
(since this union is clearly a submonoid of $\DPS_n$, which admits $\I(\{1,2,\ldots,n-1\})$ as an ideal, 
in fact the previous inclusion is an equality). 

\smallskip 

Now, let $i,j\in\{1,\ldots,n-1\}$. Then 
$$
\begin{pmatrix}0&i\\j&0\end{pmatrix}=\begin{pmatrix}0&i\\0&1\end{pmatrix}
\begin{pmatrix}0&1\\1&0\end{pmatrix}\begin{pmatrix}0&1\\0&j\end{pmatrix}, \quad 
\begin{pmatrix}0\\j\end{pmatrix}=\begin{pmatrix}0\\0\end{pmatrix}
\begin{pmatrix}0&1\\1&0\end{pmatrix}\begin{pmatrix}1\\j\end{pmatrix} \quad\text{and}\quad
\begin{pmatrix}i\\0\end{pmatrix}=\begin{pmatrix}i\\1\end{pmatrix}
\begin{pmatrix}0&1\\1&0\end{pmatrix}\begin{pmatrix}0\\0\end{pmatrix} 
$$
and so, also in view of (\ref{almostall}), we have  
$\begin{pmatrix}0&i\\j&0\end{pmatrix}, \begin{pmatrix}0\\j\end{pmatrix}, \begin{pmatrix}i\\0\end{pmatrix}\in\langle\alpha_1,\alpha_2,\beta_1,\beta_2,\gamma\rangle$ and thus  
we may conclude that 
$\DPS_n=\langle\alpha_1,\alpha_2,\beta_1,\beta_2,\gamma\rangle$, as required. 

\smallskip

Finally, regarding the case $n=3$, it suffices to notice that $\alpha_1=\alpha_2$ and $\beta_1=\gamma^2$. 
\end{proof}

Let $n\geqslant 3$ and take a set of generators $X$ of $\DPS_n$. 
Recall that, by Proposition \ref{subiso}, the submonoid 
$M=\{\alpha\in\DPS_n\mid \mbox{$0\in\dom(\alpha)$ and $0\alpha=0$}\}$ of $\DPS_n$ 
is isomorphic to $\I(\{1,2,\ldots,n-1\})$. 

For $n=3$, it is clear that 
$\alpha_1=\begin{pmatrix}0&1&2\\0&2&1\end{pmatrix}\in X$, 
since it is the only element of the group of units of $\DPS_3$ distinct from the identity. 

On the other hand, for $n\geqslant 4$, it is easy to check that an element of $M$
with rank greater than or equal to three can only be a product of elements belonging to $M$. 
Hence, in this case, $X$ must contain at least three elements of $M$ 
(at least two of them with rank $n$ and one of them with rank $n-1$). 

Next, observe that, for instance, $\gamma$ can only be obtained from $X$ if at least one element of the form 
$\begin{pmatrix}0&i\\j&0\end{pmatrix}$ belongs to $X$, for some $i,j\in\{1,\ldots,n-1\}$. 

Finally, since all elements of $M$ fix $0$, without at least one element of $\I(\{1,2,\ldots,n-1\})$ with rank $n-1$ in $X$, 
we cannot get, for instance, $\beta_2$ as a product of elements of $X$. 

Thus, we have proved the following result with which we end this section. 

\begin{theorem}
The rank of $\DPS_n$ is $3$, for $n=3$, and $5$, for $n\geqslant 4$.
\end{theorem}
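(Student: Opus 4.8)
The plan is to pair the upper bounds already established in Proposition~\ref{dpsgen} with matching lower bounds on the size of an arbitrary generating set. Proposition~\ref{dpsgen} gives $\DPS_3=\langle\alpha_1,\beta_2,\gamma\rangle$ and $\DPS_n=\langle\alpha_1,\alpha_2,\beta_1,\beta_2,\gamma\rangle$ for $n\geqslant4$, so the rank of $\DPS_3$ is at most $3$ and the rank of $\DPS_n$ is at most $5$ for $n\geqslant4$; what remains is to prove that every generating set $X$ of $\DPS_n$ has at least $3$ elements when $n=3$ and at least $5$ elements when $n\geqslant4$. I would do this by forcing $X$ to contain a prescribed number of pairwise distinct elements drawn from the pieces of the partition~(\ref{dpsn}), writing throughout $M=\{\alpha\in\DPS_n\mid 0\in\dom(\alpha),\ 0\alpha=0\}$ for the first of those pieces, which is isomorphic to $\I(\{1,\ldots,n-1\})$ by Proposition~\ref{subiso}.

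The only tool needed is that $|\dom(\alpha\beta)|\leqslant\min\{|\dom(\alpha)|,|\dom(\beta)|\}$ in $\I(\{0,1,\ldots,n-1\})$, so in any factorization $\theta=x_1\cdots x_k$ over $X$ each factor $x_i$ has rank at least $\operatorname{rank}(\theta)$; together with the explicit list~(\ref{dpsn}) this pins down where the factors can lie. \emph{Step 1 (elements of $M$).} Using Proposition~\ref{dpschar}, if $\theta\in M$ has rank at least $3$ then in $\theta=x_1\cdots x_k$ the first factor must fix $0$ and hence lie in $M$, and following the image of $0$ propagates this to every $x_i$, so $\theta$ factors entirely inside $M$. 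Since every unit of $\DPS_n$ lies in $M$ (again by Proposition~\ref{dpschar}) and, rank being non-increasing under multiplication, any product of elements of $X$ that is a unit has all of its factors units, the units of $X$ lie in $X\cap M$ and generate the group of units $\mathcal{S}(\{1,\ldots,n-1\})$. For $n\geqslant4$ this group has rank $2$, so $X$ contains two units of $M$; moreover $\beta_1\in M$ then has rank $n-1\geqslant3$, so by the closure statement it is a product of elements of $X\cap M$ which are not all units, forcing a third element of $M$, of rank $n-1$, into $X$; hence $|X\cap M|\geqslant3$. For $n=3$ the group $\mathcal{S}(\{1,2\})$ has rank $1$, so $X$ must contain from $M$ only the non-identity unit $\alpha_1$.

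\emph{Step 2 (the map $\gamma$).} Following the image of $0$ along a factorization of $\gamma$ yields a factor $x_t$ with $0\in\dom(x_t)$ and $0x_t\neq0$; by~(\ref{dpsn}) such an $x_t$ is either of the form $\begin{pmatrix}0&i\\j&0\end{pmatrix}$ or a rank-$1$ map, and the latter is impossible since $\operatorname{rank}(\gamma)=2$, so $X$ meets the third piece of~(\ref{dpsn}). \emph{Step 3 (the map $\beta_2$).} Every factor of $\beta_2$ has rank at least $n-1$; if they all lay in $M$ their product would fix $0$, contradicting $0\notin\dom(\beta_2)$, so some factor lies in the piece $\I(\{1,\ldots,n-1\})$ of~(\ref{dpsn}) and, having rank at least $n-1$, has rank exactly $n-1$. \emph{Step 4 (count).} The elements obtained in Steps 1--3 are pairwise distinct: those coming from $M$ fix $0$; the one from Step 2 moves $0$; the one from Step 3 does not have $0$ in its domain; and within $M$ the two units have rank $n$ while the third has rank $n-1$. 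Hence $|X|\geqslant5$ for $n\geqslant4$ and $|X|\geqslant3$ for $n=3$, which together with the upper bounds gives the stated ranks.

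I expect the main obstacle to be Step~1: the closure assertion that a rank-$\geqslant3$ element of $M$ factors only through $M$, and the parallel bookkeeping in Steps~2 and~3, must be justified by a careful case analysis on the size of the domain and on whether $0$ belongs to it --- precisely the case split packaged in Proposition~\ref{dpschar}. A secondary delicate point is the borderline case $n=3$, where $n-1=2$ coincides with the rank of the maps $\begin{pmatrix}0&i\\j&0\end{pmatrix}$, so in Step~3 one cannot exclude such factors on rank grounds alone; there one argues instead that, in a factorization of $\beta_2$ over $M$ and the maps $\begin{pmatrix}0&i\\j&0\end{pmatrix}$, tracking the image of $0$ locates an initial segment which sends $0$ to a nonzero value and has rank $2$, hence equals a single map $\begin{pmatrix}0&c\\d&0\end{pmatrix}$ by Proposition~\ref{dpschar}, whence $\dom(\beta_2)$ would have to be contained in $\{0,c\}$ --- impossible, since $\dom(\beta_2)=\{1,2\}$.
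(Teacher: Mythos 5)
Your argument is correct and follows essentially the same route as the paper: the upper bound comes from Proposition~\ref{dpsgen}, and the lower bound is obtained by showing that any generating set must meet the pieces of the decomposition~(\ref{dpsn}) in the prescribed way (three elements of $M$ for $n\geqslant 4$, one element of the form $\left(\begin{smallmatrix}0&i\\j&0\end{smallmatrix}\right)$, and one rank-$(n-1)$ element of $\I(\{1,\ldots,n-1\})$). You simply supply more of the bookkeeping (tracking the image of $0$, the borderline $n=3$ case in Step~3) that the paper leaves as ``easy to check.''
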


\section{A Presentation for $\DPS_n$}
\label{dpspres}

We begin this section by recalling some notions related to the concept of a monoid presentation.

\smallskip 

Let $A$ be an alphabet and consider the free monoid $A^*$ generated by $A$. 
The elements of $A$ and of $A^*$ are called \textit{letters} and \textit{words}, respectively. 
The empty word is denoted by $1$ and we write $A^+$ to express $A^*\setminus\{1\}$. 
A pair $(u,v)$ of $A^*\times A^*$ is called a
\textit{relation} of $A^*$ and it is usually represented by $u=v$. 
To avoid confusion, given $u, v\in A^*$, we will write $u\equiv v$, instead
of  $u=v$, whenever we want to state precisely that $u$ and $v$
are identical words of $A^*$.  
A relation $u=v$ of $A^*$ is said to be a \textit{consequence} of $R$ if $u\mathrel{\rho_R} v$. 
Let $X$ be a generating set of $M$ and let $f: A\longrightarrow M$ be an injective mapping 
such that $Af=X$. 
Let $\varphi: A^*\longrightarrow M$ be the (surjective) homomorphism of monoids that extends $f$ to $A^*$. 
We say that $X$ satisfies (via $\varphi$) a relation $u=v$ of $A^*$ if $u\varphi=v\varphi$. 
For more details see
\cite{Lallement:1979} or \cite{Ruskuc:1995}. 
A direct method to find a presentation for a monoid
is described by the following well-known result (e.g.  see \cite[Proposition 1.2.3]{Ruskuc:1995}).  

\begin{proposition}\label{provingpresentation} 
Let $M$ be a monoid generated by a set $X$, let $A$ be an alphabet 
and let $f: A\longrightarrow M$ be an injective mapping 
such that $Af=X$. 
Let $\varphi:A^*\longrightarrow M$ be the (surjective) homomorphism 
that extends $f$ to $A^*$ and let $R\subseteq A^*\times A^*$.
Then $\langle A\mid R\rangle$ is a presentation for $M$ if and only
if the following two conditions are satisfied:
\begin{enumerate}
\item
The generating set $X$ of $M$ satisfies (via $\varphi$) all the relations from $R$;  
\item 
If $u,v\in A^*$ are any two words such that 
the generating set $X$ of $M$ satisfies (via $\varphi$) the relation $u=v$ then $u=v$ is a consequence of $R$.
\end{enumerate} 
\end{proposition}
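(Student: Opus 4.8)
The plan is to recast both clauses of the statement as a single equality of congruences on the free monoid $A^{*}$. Write $\ker\varphi=\{(u,v)\in A^{*}\times A^{*}\mid u\varphi=v\varphi\}$ for the kernel congruence of $\varphi$; this is a congruence on $A^{*}$, and, since $\varphi$ is onto, the induced map $A^{*}/\ker\varphi\to M$ is an isomorphism. I would then prove that Conditions~1 and~2 taken together are equivalent to the equality $\ker\varphi=\rho_{R}$, and finally that this equality is equivalent to $\langle A\mid R\rangle$ being a presentation for $M$.

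First I would translate the two conditions into inclusions. Condition~1 asserts exactly that $u\varphi=v\varphi$ for every $(u,v)\in R$, i.e. $R\subseteq\ker\varphi$; since $\ker\varphi$ is a congruence and $\rho_{R}$ is, by definition, the least congruence on $A^{*}$ containing $R$, this is equivalent to $\rho_{R}\subseteq\ker\varphi$. Condition~2 is word for word the reverse inclusion $\ker\varphi\subseteq\rho_{R}$, because ``$X$ satisfies $u=v$ via $\varphi$'' means $(u,v)\in\ker\varphi$ while ``$u=v$ is a consequence of $R$'' means $(u,v)\in\rho_{R}$. Hence Conditions~1 and~2 hold simultaneously if and only if $\ker\varphi=\rho_{R}$.

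It remains to establish that $\langle A\mid R\rangle$ is a presentation for $M$ if and only if $\ker\varphi=\rho_{R}$. One implication is immediate: if $\ker\varphi=\rho_{R}$ then $A^{*}/\rho_{R}=A^{*}/\ker\varphi\cong M$, so $\langle A\mid R\rangle$ is a presentation for $M$; this is the direction actually invoked in Section~\ref{dpspres}, so I would take care to state it explicitly and in full. For the converse I would let $\rho_{R}^{\natural}\colon A^{*}\to A^{*}/\rho_{R}$ be the canonical epimorphism and compose it with an isomorphism $A^{*}/\rho_{R}\to M$; this yields an epimorphism $A^{*}\to M$ with kernel $\rho_{R}$, and, since the images of the letters of $A$ generate $M$ and are meant to correspond to the generating set $X=Af$, one can choose the isomorphism so that this epimorphism sends each $a\in A$ to $af$. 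By freeness of $A^{*}$ it then coincides with $\varphi$, whence $\ker\varphi=\rho_{R}$.

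I expect the genuine subtlety to lie in that last step, namely reconciling the abstract datum ``$M\cong A^{*}/\rho_{R}$'' with the concrete homomorphism $\varphi$ fixed in advance. One must either build into the meaning of ``$\langle A\mid R\rangle$ is a presentation for $M$'' (relative to the given $f$) that the accompanying isomorphism carries $a\rho_{R}$ to $af$ for every $a\in A$, or argue that, under the standing hypothesis that $Af=X$ generates $M$ via $\varphi$, a compatible isomorphism always exists. This is not a mere formality: demanding only $M\cong A^{*}/\rho_{R}$ abstractly is too weak (take, say, $A=\{a,b\}$, $R=\{(a^{2},1),(b,1)\}$ and $M$ the two-element group, with $f$ sending $b$ to the non-identity element --- then $\langle A\mid R\rangle$ presents $M$ in the abstract sense, yet $X$ fails the relation $b=1$, so Condition~1 breaks). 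Consequently the write-up should spend its effort fixing this convention cleanly; everything else reduces to the standard correspondence between congruences on $A^{*}$ and quotient monoids, together with the minimality of $\rho_{R}$.
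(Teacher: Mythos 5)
The paper does not actually prove this proposition: it is quoted as a known result with a pointer to Ru\v{s}kuc's thesis, so there is no in-paper argument to compare yours against. Your proof is the standard one and is essentially correct: translating Condition~1 into $R\subseteq\ker\varphi$, hence $\rho_R\subseteq\ker\varphi$ by minimality of $\rho_R$, and Condition~2 into $\ker\varphi\subseteq\rho_R$, reduces everything to the equality $\ker\varphi=\rho_R$, and the homomorphism theorem then gives $A^*/\rho_R=A^*/\ker\varphi\cong M$. That is exactly the direction the paper actually uses in Section~4, and you establish it cleanly. You are also right to flag the converse as the only delicate point: with the paper's literal definition (``$M$ is isomorphic to $A^*/\rho_R$'' as an abstract isomorphism), the ``only if'' direction is false, and your example with $A=\{a,b\}$, $R=\{(a^2,1),(b,1)\}$, $M=\mathbb{Z}_2$ and $bf$ the non-identity is a genuine counterexample. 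The statement is true only under the standard convention (the one Ru\v{s}kuc uses) that the presentation is taken relative to the given assignment $f$, i.e.\ that the isomorphism $A^*/\rho_R\to M$ is required to send $a\rho_R$ to $af$; under that reading your composition-with-the-canonical-epimorphism argument closes the converse. So the only thing to fix in the write-up is to state that convention explicitly rather than saying ``one can choose the isomorphism'' (one cannot, in general); once that is done the proof is complete.
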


\smallskip

Given a presentation for a monoid, another method to find a new
presentation consists in applying Tietze transformations. For a
monoid presentation $\langle A\mid R\rangle$, the 
four \emph{elementary Tietze transformations} are:

\begin{description}
\item(T1)
Adding a new relation $u=v$ to $\langle A\mid R\rangle$,
provided that $u=v$ is a consequence of $R$;
\item(T2)
Deleting a relation $u=v$ from $\langle A\mid R\rangle$,
provided that $u=v$ is a consequence of $R\backslash\{u=v\}$;
\item(T3)
Adding a new generating symbol $b$ and a new relation $b=w$, where
$w\in A^*$;
\item(T4)
If $\langle A\mid R\rangle$ possesses a relation of the form
$b=w$, where $b\in A$, and $w\in(A\backslash\{b\})^*$, then
deleting $b$ from the list of generating symbols, deleting the
relation $b=w$, and replacing all remaining appearances of $b$ by
$w$.
\end{description}

The next result is well-known (e.g. see \cite{Ruskuc:1995}): 

\begin{proposition} \label{tietze}
Two finite presentations define the same monoid if and only if one
can be obtained from the other by a finite number of elementary
Tietze transformations $(T1)$, $(T2)$, $(T3)$ and $(T4)$.  
\end{proposition}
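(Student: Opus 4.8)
The plan is to prove the two implications separately. The ``if'' part is a routine verification that each elementary transformation preserves the monoid defined by a presentation, while the ``only if'' part requires building a chain of transformations between two arbitrary presentations of the same monoid.

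For the ``if'' direction I would check each of (T1)--(T4) in turn. If $u=v$ is a consequence of $R$, then $\rho_R=\rho_{R\cup\{u=v\}}$, so $A^*/\rho_R=A^*/\rho_{R\cup\{u=v\}}$; this handles (T1), and (T2) is its reverse. For (T3), passing from $\langle A\mid R\rangle$ to $\langle A\cup\{b\}\mid R\cup\{b=w\}\rangle$ with $w\in A^*$ and $b\notin A$, I would show that the homomorphism $A^*\to(A\cup\{b\})^*/\rho_{R\cup\{b=w\}}$ induced by the inclusion $A\hookrightarrow A\cup\{b\}$ is onto (the class of $b$ equals that of $w$) with kernel exactly $\rho_R$, hence induces an isomorphism $A^*/\rho_R\cong(A\cup\{b\})^*/\rho_{R\cup\{b=w\}}$; and (T4) is simply the inverse of (T3). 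Transitivity of isomorphism then gives the claim for any finite chain.

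For the ``only if'' direction, suppose $\langle A\mid R\rangle$ and $\langle B\mid S\rangle$ both define $M$; after renaming letters we may assume $A\cap B=\emptyset$. Write $\varphi\colon A^*\to M$ and $\psi\colon B^*\to M$ for the canonical surjections. The idea is to enlarge $\langle A\mid R\rangle$ to a ``bridge'' presentation over $A\cup B$ and then cut it down to $\langle B\mid S\rangle$. Concretely: for each $b\in B$ pick $w_b\in A^*$ with $w_b\varphi=b\psi$ and adjoin $b$ and the relation $b=w_b$ by (T3); adjoin all relations of $S$ by (T1), legitimate because each holds in $M$ and the current presentation already defines $M$; for each $a\in A$ pick $v_a\in B^*$ with $v_a\psi=a\varphi$ and adjoin $a=v_a$ by (T1); then delete the relations of $R$ by (T2), each being a consequence of $S\cup\{a=v_a\mid a\in A\}$ since rewriting an $A$-word through the relations $a=v_a$ produces a $B$-word and $B$-words equal in $M$ are $S$-equivalent; delete the relations $b=w_b$ by (T2) for the same reason; and finally delete each $a\in A$ with its relation $a=v_a$ by (T4), which applies because $v_a\in B^*\subseteq(A\cup B\setminus\{a\})^*$ and, after the previous steps, no letter of $A$ occurs in any surviving relation. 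What remains is exactly $\langle B\mid S\rangle$.

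The main obstacle will be the bookkeeping: at each step one must check that the relation added is a genuine consequence of the relations currently present --- not merely that it holds in $M$ --- and that each deleted relation is a consequence of what remains. The two delicate reductions, namely the deletion of $R$ and of $\{b=w_b\mid b\in B\}$, both rest on the fact that $\langle B\mid S\rangle$ defining $M$ means precisely $\rho_S=\ker\psi$, so that any two $B$-words with the same image in $M$ are already $S$-equivalent; and since every $v_a$ lies in $B^*$, there are no circular dependencies among the relations $a=v_a$, so the final (T4)-deletions may be carried out in any order. Verifying that the kernel in the (T3)-step is exactly $\rho_R$ is the other point needing care, though it too is routine.
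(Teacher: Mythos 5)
Your proof is correct and is the standard Tietze-transformation argument (build a bridge presentation over $A\cup B$, transfer the relations, then collapse back down); the two points you flag as delicate — that the kernel after a (T3)-step is exactly $\rho_R$, and that relations must be consequences of the current relation set rather than merely holding in $M$ — are indeed the only places needing care, and your sketch handles them properly. Note that the paper offers no proof of this proposition at all: it is quoted as well known with a reference to Ru\v{s}kuc's thesis, so there is nothing to compare against beyond observing that your argument is the classical one found there.
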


\medskip 

In this section, we aim to determine a presentation for $\DPS_n$. 
In order to achieve this objective, we will take into account known presentations of symmetric inverse monoids.
So, we begin by recalling the following well known presentation of the symmetric inverse monoid $\I(\{1,2,\ldots,n-1\})$, for $n\geqslant 4$: 
\begin{multline}\label{presin} 
\langle a_1,a_2,b \mid  
a_2^2=a_1^{n-1}=(a_1a_2)^{n-2}=(a_2a_1^{n-2}a_2a_1)^3=1,\: (a_2a_1^{n-1-j}a_2a_1^j)^2=1 \:(2\leqslant j\leqslant n-3),
\\
a_1^{n-2}a_2a_1ba_1^{n-2}a_2a_1=a_1a_2ba_2a_1^{n-2}=b=b^2, \: (ba_2)^2=ba_2b=(a_2b)^2\rangle.
\end{multline}
This presentation is associated to the set of generators 
$$
\left\{\alpha'_1=\begin{pmatrix}1&2&\cdots&n-2&n-1\\2&3&\cdots&n-1&1\end{pmatrix}, \: 
\alpha'_2=\begin{pmatrix}1&2&3&\cdots&n-1\\2&1&3&\cdots&n-1\end{pmatrix}, \: 
\beta'=\begin{pmatrix}2&3&\cdots&n-1\\2&3&\cdots&n-1\end{pmatrix}\right\}
$$ 
of $\I(\{1,2,\ldots,n-1\})$ via the homomorphism of monoids $\{a_1,a_2,b\}^*\longrightarrow \I(\{1,2,\ldots,n-1\})$ that extends the mapping  
$a_1\longmapsto\alpha'_1$, $a_2\longmapsto\alpha'_2$ and $b\longmapsto\beta'$ (see \cite{Fernandes:2002survey}). 

\smallskip 

Next, by applying Tietze transformations, we deduce a presentation of $\I(\{1,2,\ldots,n-1\})$ 
associated to the following set of generators:  
$$
\left\{\alpha'_1=\begin{pmatrix}1&2&\cdots&n-2&n-1\\2&3&\cdots&n-1&1\end{pmatrix}, \: 
\alpha'_2=\begin{pmatrix}1&2&3&\cdots&n-1\\2&1&3&\cdots&n-1\end{pmatrix}, \: 
\beta'_1=\begin{pmatrix}1&2&\cdots&n-2\\1&2&\cdots&n-2\end{pmatrix}\right\}. 
$$ 
Notice that $\beta'_1=\alpha'_1\beta'{\alpha'_1}^{n-2}$ and $\beta'={\alpha'_1}^{n-2}\beta'_1\alpha'_1$. 

\begin{proposition}\label{presin-1}
For $n\geqslant 4$, the monoid $\I(\{1,2,\ldots,n-1\})$ is defined by the presentation
\begin{multline}\nonumber
\langle a_1,a_2,b_1\mid a_2^2=a_1^{n-1}=(a_1a_2)^{n-2}=(a_2a_1^{n-2}a_2a_1)^3=1,\:(a_2a_1^{n-1-j}a_2a_1^j)^2=1 \:(2\leqslant j\leqslant n-3),
\\
a_1a_2a_1^{n-2}b_1a_1a_2a_1^{n-2}=a_1^{n-2}b_1a_1,\: (a_1^{n-2}b_1a_1a_2)^2=a_1^{n-2}b_1a_1a_2a_1^{n-2}b_1a_1=(a_2a_1^{n-2}b_1a_1)^2,
\\
b_1^2=b_1, \: a_2b_1=b_1a_2\rangle,  
\end{multline}
which is associated to its set of generators $\{\alpha'_1,\alpha'_2,\beta'_1\}$ 
via the homomorphism of monoids $\{a_1,a_2,b_1\}^*\longrightarrow \I(\{1,2,\ldots,n-1\})$ that extends the mapping  
$a_1\longmapsto\alpha'_1$, $a_2\longmapsto\alpha'_2$ and $b_1\longmapsto\beta'_1$. 
\end{proposition}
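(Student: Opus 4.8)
The plan is to obtain the claimed presentation from the known presentation (\ref{presin}) by a finite sequence of elementary Tietze transformations, invoking Proposition~\ref{tietze}. Concretely, I would start from $\langle a_1,a_2,b\mid R\rangle$ as in (\ref{presin}), which is associated to the generating set $\{\alpha'_1,\alpha'_2,\beta'\}$, and introduce a new generating symbol $b_1$ together with the relation $b_1=a_1ba_1^{n-2}$ (a single application of (T3)). Since $\beta'_1=\alpha'_1\beta'{\alpha'_1}^{n-2}$ holds in $\I(\{1,2,\ldots,n-1\})$, this new generator is interpreted as $\beta'_1$, as desired. The key algebraic fact driving everything is the mutual inverse pair of substitutions $b_1\equiv a_1ba_1^{n-2}$ and $b\equiv a_1^{n-2}b_1a_1$, which are consequences of the relations $a_1^{n-1}=1$ and $b_1=a_1ba_1^{n-2}$; I would first add the relation $b=a_1^{n-2}b_1a_1$ via (T1) (it is a consequence, using $a_1^{n-1}=1$), and then use (T4) to delete the generator $b$ and the relation $b=a_1^{n-2}b_1a_1$, replacing every remaining occurrence of $b$ throughout $R$ by the word $a_1^{n-2}b_1a_1$.

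After this substitution, the symmetric-group relations in the first line of (\ref{presin}) are untouched (they involve only $a_1,a_2$), and the remaining relations transform mechanically. The relation $a_1^{n-2}a_2a_1ba_1^{n-2}a_2a_1=a_1a_2ba_2a_1^{n-2}$ becomes, after inserting $b\equiv a_1^{n-2}b_1a_1$ and simplifying with $a_1^{n-1}=1$ wherever powers of $a_1$ collide, a relation equivalent to $a_1a_2a_1^{n-2}b_1a_1a_2a_1^{n-2}=a_1^{n-2}b_1a_1$; likewise $b=b^2$ becomes (after the same substitution and left/right multiplication by suitable powers of $a_1$, which are (T1)/(T2) steps since $a_1$ is invertible) the relation $b_1^2=b_1$; the relations $(ba_2)^2=ba_2b=(a_2b)^2$ become $(a_1^{n-2}b_1a_1a_2)^2=a_1^{n-2}b_1a_1a_2a_1^{n-2}b_1a_1=(a_2a_1^{n-2}b_1a_1)^2$. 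Finally I would add, via (T1), the relation $a_2b_1=b_1a_2$ — this is a consequence because $\alpha'_2$ and $\beta'_1$ commute (both fix $1$ and $\alpha'_2$ only swaps $1,2$ while $\beta'_1$ is the identity on $\{1,\ldots,n-2\}$ restricted, hence their composite in either order is the partial identity on $\{2,\ldots,n-2\}$ conjugated appropriately) and it is derivable from $(ba_2)^2=(a_2b)^2=ba_2b$ after the substitution — and then use (T2) to delete whichever of the transformed relations have become redundant, so that exactly the list displayed in the statement survives. To finish, I would remark that the resulting presentation is associated to $\{\alpha'_1,\alpha'_2,\beta'_1\}$ via the stated homomorphism, since Tietze transformations preserve the monoid up to isomorphism and the generator $b_1$ was introduced precisely as the image of $\beta'_1$.

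For rigour one should, rather than tracking each (T1)/(T2)/(T4) step by hand, simply verify the two conditions of Proposition~\ref{provingpresentation} directly: first, that $\{\alpha'_1,\alpha'_2,\beta'_1\}$ satisfies (via the stated homomorphism) every relation in the new list — a routine check using the explicit forms of $\alpha'_1,\alpha'_2,\beta'_1$ and the fact that $\beta'_1={\alpha'_1}\beta'{\alpha'_1}^{n-2}$; and second, that conversely every relation satisfied by $\{\alpha'_1,\alpha'_2,\beta'_1\}$ is a consequence of the new list. The second part is exactly where the Tietze argument does its work: one shows that from the new relations one can recover (the images under $b\mapsto a_1^{n-2}b_1a_1$ of) all the relations of (\ref{presin}), and since (\ref{presin}) is a presentation of $\I(\{1,2,\ldots,n-1\})$, any relation holding in the monoid is a consequence of those and hence of the new list. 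The main obstacle, and the only genuinely computational part, is the bookkeeping with powers of $a_1$: the substitution $b\mapsto a_1^{n-2}b_1a_1$ and the repeated use of $a_1^{n-1}=1$ to cancel and re-balance exponents when deriving the symmetrical-looking relation $a_1a_2a_1^{n-2}b_1a_1a_2a_1^{n-2}=a_1^{n-2}b_1a_1$ and the triple equality $(a_1^{n-2}b_1a_1a_2)^2=a_1^{n-2}b_1a_1a_2a_1^{n-2}b_1a_1=(a_2a_1^{n-2}b_1a_1)^2$ from their counterparts in (\ref{presin}); this is conceptually straightforward but must be done carefully, separating out if necessary small values of $n$ where some exponent ranges degenerate.
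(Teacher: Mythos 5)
Your proposal follows essentially the same route as the paper: introduce $b_1$ via (T3) with $b_1=a_1ba_1^{n-2}$, add $b=a_1^{n-2}b_1a_1$ by (T1), eliminate $b$ by (T4), then add $b_1^2=b_1$ and $a_2b_1=b_1a_2$ and delete the relations that have become redundant. The only small discrepancy is your attribution of $a_2b_1=b_1a_2$ to the substituted $(ba_2)^2=ba_2b=(a_2b)^2$ relations; the paper instead derives it (together with $a_2^2=a_1^{n-1}=1$) from the substituted form of $a_1^{n-2}a_2a_1ba_1^{n-2}a_2a_1=b$, which reduces to $a_2b_1a_2=b_1$ --- but this is a matter of bookkeeping, not of substance.
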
 

\begin{proof}
We proceed by applying elementary Tietze transformations to the above presentation (\ref{presin}). 

\smallskip

Step 1: We add a new symbol, $b_1$, to the alphabet and add the new relation $b_1=a_1ba_1^{n-2}$. The resulting presentation is 
\begin{multline}\nonumber
\langle a_1,a_2,b,b_1\mid a_2^2=a_1^{n-1}=(a_1a_2)^{n-2}=(a_2a_1^{n-2}a_2a_1)^3=1,\: (a_2a_1^{n-1-j}a_2a_1^j)^2=1 \:(2\leqslant j\leqslant n-3),
\\
a_1^{n-2}a_2a_1ba_1^{n-2}a_2a_1=a_1a_2ba_2a_1^{n-2}=b=b^2,\: (ba_2)^2=ba_2b=(a_2b)^2, \: b_1=a_1ba_1^{n-2}\rangle. 
\end{multline}

\smallskip

Step 2: We add a new relation $b=a_1^{n-2}b_1a_1$. Observe that $b=a_1^{n-2}b_1a_1$ is a consequence of 
the relations $a_1^{n-1}=1$ and $b_1=a_1ba_1^{n-2}$: 
$$
b=1b1=a_1^{n-2}a_1ba_1^{n-2}a_1=a_1^{n-2}b_1a_1.
$$
The resulting presentation is 
\begin{multline}\nonumber
\langle a_1,a_2,b,b_1\mid a_2^2=a_1^{n-1}=(a_1a_2)^{n-2}=(a_2a_1^{n-2}a_2a_1)^3=1,\: (a_2a_1^{n-1-j}a_2a_1^j)^2=1 \:(2\leqslant j\leqslant n-3),
\\
a_1^{n-2}a_2a_1ba_1^{n-2}a_2a_1=a_1a_2ba_2a_1^{n-2}=b=b^2,\: (ba_2)^2=ba_2b=(a_2b)^2, \: 
b_1=a_1ba_1^{n-2}, \: b=a_1^{n-2}b_1a_1\rangle. 
\end{multline}

\smallskip 

Step 3: We remove the symbol $b$, along with the relation $b=a_1^{n-2}b_1a_1$, and replace all occurrences of $b$ by $a_1^{n-2}b_1a_1$ in the remaining relations. The resulting presentation is 
\begin{multline}\nonumber
\langle a_1,a_2,b_1\mid a_2^2=a_1^{n-1}=(a_1a_2)^{n-2}=(a_2a_1^{n-2}a_2a_1)^3=1,\: (a_2a_1^{n-1-j}a_2a_1^j)^2=1 \:(2\leqslant j\leqslant n-3),
\\
a_1^{n-2}a_2a_1^{n-1}b_1a_1^{n-1}a_2a_1=a_1a_2a_1^{n-2}b_1a_1a_2a_1^{n-2}=a_1^{n-2}b_1a_1=(a_1^{n-2}b_1a_1)^2,
\\
(a_1^{n-2}b_1a_1a_2)^2=a_1^{n-2}b_1a_1a_2a_1^{n-2}b_1a_1=(a_2a_1^{n-2}b_1a_1)^2, \: b_1=a_1^{n-1}b_1a_1^{n-1}\rangle;
\end{multline}

\smallskip

Step 4: We add the relations $b_1^2=b_1$ and $a_2b_1=b_1a_2$, as a result of being consequences of 
$a_1^{n-1}=1$, $a_2^2=1$, $a_1^{n-2}a_2a_1^{n-1}b_1a_1^{n-1}a_2a_1=a_1^{n-2}b_1a_1$ and $a_1^{n-2}b_1a_1=(a_1^{n-2}b_1a_1)^2$: 
$$
b_1^2=1b_11b_11=a_1^{n-1}b_1a_1^{n-1}b_1a_1^{n-1}=a_1(a_1^{n-2}b_1a_1)^2a_1^{n-2}=a_1a_1^{n-2}b_1a_1a_1^{n-2}=1b_11=b_1
$$
and
\begin{multline}\nonumber
a_2b_1=1a_21b_11=a_1^{n-1}a_2a_1^{n-1}b_1a_1^{n-1}1=a_1^{n-1}a_2a_1^{n-1}b_1a_1^{n-1}a_2^2=a_1^{n-1}a_2a_1^{n-1}b_1a_1^{n-1}a_21a_2
\\
= a_1^{n-1}a_2a_1^{n-1}b_1a_1^{n-1}a_2a_1^{n-1}a_2=a_1a_1^{n-2}b_1a_1a_1^{n-2}a_2=1b_11a_2=b_1a_2.
\end{multline}
The resulting presentation is 
\begin{multline}\nonumber
\langle a_1,a_2,b_1\mid a_2^2=a_1^{n-1}=(a_1a_2)^{n-2}=(a_2a_1^{n-2}a_2a_1)^3=1, \: (a_2a_1^{n-1-j}a_2a_1^j)^2=1 \:(2\leqslant j\leqslant n-3),
\\
a_1^{n-2}a_2a_1a_1^{n-2}b_1a_1a_1^{n-2}a_2a_1=a_1a_2a_1^{n-2}b_1a_1a_2a_1^{n-2}=a_1^{n-2}b_1a_1=(a_1^{n-2}b_1a_1)^2,
\\
(a_1^{n-2}b_1a_1a_2)^2=a_1^{n-2}b_1a_1a_2a_1^{n-2}b_1a_1=(a_2a_1^{n-2}b_1a_1)^2, \: b_1=a_1^{n-1}b_1a_1^{n-1}, \: 
 b_1^2=b_1, \: a_2b_1=b_1a_2\rangle.
\end{multline}

\smallskip

Step 5: We may remove the relations 
$a_1^{n-2}a_2a_1a_1^{n-2}b_1a_1a_1^{n-2}a_2a_1=a_1^{n-2}b_1a_1$, $a_1^{n-2}b_1a_1=(a_1^{n-2}b_1a_1)^2$ and $b_1=a_1^{n-1}b_1a_1^{n-1}$, since they are consequences of $a_1^{n-1}=1$, $a_2^2=1$, $b_1^2=b_1$ and $a_2b_1=b_1a_2$:
$$
a_1^{n-2}a_2a_1a_1^{n-2}b_1a_1a_1^{n-2}a_2a_1=a_1^{n-2}a_21b_11a_2a_1=a_1^{n-2}a_2b_1a_2a_1=a_1^{n-2}b_1a_2^2a_1
= a_1^{n-2}b_11a_1=a_1^{n-2}b_1a_1, 
$$
$$
a_1^{n-2}b_1a_1=a_1^{n-2}b_1^2a_1=a_1^{n-2}b_11b_1a_1=a_1^{n-2}b_1a_1^{n-1}b_1a_1=(a_1^{n-2}b_1a_1)^2
$$
and
$$
b_1=1b_11=a_1^{n-1}b_1a_1^{n-1}.
$$
The resulting presentation is 
\begin{multline}\nonumber
\langle a_1,a_2,b_1\mid a_2^2=a_1^{n-1}=(a_1a_2)^{n-2}=(a_2a_1^{n-2}a_2a_1)^3=1,\: (a_2a_1^{n-1-j}a_2a_1^j)^2=1 \:(2\leqslant j\leqslant n-3),
\\
a_1a_2a_1^{n-2}b_1a_1a_2a_1^{n-2}=a_1^{n-2}b_1a_1, \: (a_1^{n-2}b_1a_1a_2)^2=a_1^{n-2}b_1a_1a_2a_1^{n-2}b_1a_1=(a_2a_1^{n-2}b_1a_1)^2, 
\\  
b_1^2=b_1, \: a_2b_1=b_1a_2\rangle, 
\end{multline}
as required. 
\end{proof}

Now, recall that, by Proposition \ref{subiso}, for $n\geqslant 3$, 
$\{\alpha\in\DPS_n\mid \mbox{$0\in\dom(\alpha)$ and $0\alpha=0$}\}$ is a submonoid of $\DPS_n$ 
isomorphic to the symmetric inverse monoid 
$\I(\{1,2,\ldots,n-1\})$: 
$$
\I(\{1,2,\ldots,n-1\})\psi=\{\alpha\in\DPS_n\mid \mbox{$0\in\dom(\alpha)$ and $0\alpha=0$}\}, 
$$
where $\psi: \I(\{1,2,\ldots,n-1\}) \longrightarrow\I(\{0,1,2,\ldots,n-1\})$ is the injective homomorphism of monoids defined in the proof of 
Proposition \ref{subiso}. 
Since $\alpha'_1\psi=\alpha_1$, $\alpha'_2\psi=\alpha_2$ and $\beta'_1\psi=\beta_1$,  
as an immediate consequence of Proposition \ref{presin-1}, we have: 

\begin{corollary}\label{presisoin-1}
For $n\geqslant 4$, the submonoid $\{\alpha\in\DPS_n\mid \mbox{$0\in\dom(\alpha)$ and $0\alpha=0$}\}$ of $\DPS_n$ is defined by the presentation
\begin{multline}\nonumber
\langle a_1,a_2,b_1\mid a_2^2=a_1^{n-1}=(a_1a_2)^{n-2}=(a_2a_1^{n-2}a_2a_1)^3=1,\:(a_2a_1^{n-1-j}a_2a_1^j)^2=1 \:(2\leqslant j\leqslant n-3),
\\
a_1a_2a_1^{n-2}b_1a_1a_2a_1^{n-2}=a_1^{n-2}b_1a_1,\: (a_1^{n-2}b_1a_1a_2)^2=a_1^{n-2}b_1a_1a_2a_1^{n-2}b_1a_1=(a_2a_1^{n-2}b_1a_1)^2,
\\
b_1^2=b_1, \: a_2b_1=b_1a_2\rangle,   
\end{multline}
which is associated to its set of generators $\{\alpha_1,\alpha_2,\beta_1\}$ 
via the homomorphism of monoids $\{a_1,a_2,b_1\}^*\longrightarrow \I(\{1,2,\ldots,n-1\})$ that extends the mapping  
$a_1\longmapsto\alpha_1$, $a_2\longmapsto\alpha_2$ and $b_1\longmapsto\beta_1$. 
\end{corollary}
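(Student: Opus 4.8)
The plan is to obtain this corollary from Proposition \ref{presin-1} by transporting that presentation along the monoid isomorphism induced by $\psi$, and then to phrase the conclusion in the language of Proposition \ref{provingpresentation}.

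First I would recall from Proposition \ref{subiso} (and its proof) that $\psi$ is an injective homomorphism of monoids whose image is precisely $M:=\{\alpha\in\DPS_n\mid \mbox{$0\in\dom(\alpha)$ and $0\alpha=0$}\}$, so that $\psi$ corestricts to an isomorphism $\I(\{1,2,\ldots,n-1\})\to M$. Since, as already noted, $\alpha'_1\psi=\alpha_1$, $\alpha'_2\psi=\alpha_2$ and $\beta'_1\psi=\beta_1$, the image under $\psi$ of the generating set $\{\alpha'_1,\alpha'_2,\beta'_1\}$ of $\I(\{1,2,\ldots,n-1\})$ is the set $\{\alpha_1,\alpha_2,\beta_1\}$, which is therefore a generating set of $M$.

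Next, let $R$ be the set of relations occurring in the presentation of Proposition \ref{presin-1}, let $\varphi':\{a_1,a_2,b_1\}^*\longrightarrow\I(\{1,2,\ldots,n-1\})$ be the homomorphism extending $a_1\longmapsto\alpha'_1$, $a_2\longmapsto\alpha'_2$, $b_1\longmapsto\beta'_1$, and let $\varphi:\{a_1,a_2,b_1\}^*\longrightarrow M$ be the homomorphism extending $a_1\longmapsto\alpha_1$, $a_2\longmapsto\alpha_2$, $b_1\longmapsto\beta_1$. By the previous paragraph $\varphi=\varphi'\psi$. I would then verify the two conditions of Proposition \ref{provingpresentation} for the pair $(\varphi,R)$. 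For condition (1): by Proposition \ref{presin-1} the generators $\alpha'_1,\alpha'_2,\beta'_1$ satisfy (via $\varphi'$) every relation of $R$, and applying the homomorphism $\psi$ turns each such equality in $\I(\{1,2,\ldots,n-1\})$ into the corresponding equality in $M$, i.e.\ $\alpha_1,\alpha_2,\beta_1$ satisfy (via $\varphi$) every relation of $R$. For condition (2): if $u,v\in\{a_1,a_2,b_1\}^*$ are such that $u\varphi=v\varphi$, then $u\varphi'\psi=v\varphi'\psi$, and the injectivity of $\psi$ gives $u\varphi'=v\varphi'$; condition (2) of Proposition \ref{provingpresentation}, applied to the presentation of Proposition \ref{presin-1}, then yields that $u=v$ is a consequence of $R$. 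Hence $\langle a_1,a_2,b_1\mid R\rangle$ is a presentation of $M$ associated to $\{\alpha_1,\alpha_2,\beta_1\}$ via $\varphi$.

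I do not expect any genuine obstacle: this is just the standard fact that a monoid presentation is inherited by every isomorphic copy, the only (routine) point being the verification --- already carried out in the text --- that $\psi$ maps the chosen generators $\alpha'_1,\alpha'_2,\beta'_1$ of $\I(\{1,2,\ldots,n-1\})$ exactly onto the chosen generators $\alpha_1,\alpha_2,\beta_1$ of $M$. Alternatively, one could bypass Proposition \ref{provingpresentation} altogether and simply observe that $\psi$ induces an isomorphism $\{a_1,a_2,b_1\}^*/\rho_R\cong\I(\{1,2,\ldots,n-1\})\cong M$.
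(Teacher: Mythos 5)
Your proposal is correct and matches the paper's approach: the paper derives this corollary as an immediate consequence of Proposition \ref{presin-1}, precisely by transporting the presentation along the isomorphism $\psi$ after noting that $\alpha'_1\psi=\alpha_1$, $\alpha'_2\psi=\alpha_2$ and $\beta'_1\psi=\beta_1$. You merely spell out, via Proposition \ref{provingpresentation}, the standard verification the paper leaves implicit.
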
 

\smallskip 

Next, let $n\geqslant 4$ and 
consider the alphabet $A=\{a_1,a_2,b_1,b_2,c\}$ and the set $R$ formed by the following $3n+9$ monoid relations:
\begin{enumerate}
\item[$(R_1)$] $a_2^2=1$;
\item[$(R_2)$] $a_1^{n-1}=1$;
\item[$(R_3)$] $(a_1a_2)^{n-2}=1$;
\item[$(R_4)$] $(a_2a_1^{n-2}a_2a_1)^3=1$;
\item[$(R_5)$] $(a_2a_1^{n-1-j}a_2a_1^j)^2=1$, $j=2,\ldots,n-3$;
\item[$(R_6)$] $b_1^2=b_1$ and $b_2^2=b_2$;
\item[$(R_7)$] $a_2b_1=b_1a_2$, $b_2a_2=a_2b_2$, $b_2a_1=a_1b_2$ and $b_2b_1=b_1b_2$;
\item[$(R_{8})$] $a_1a_2a_1^{n-2}b_1a_1a_2a_1^{n-2}=a_1^{n-2}b_1a_1$;
\item[$(R_{9})$] $(a_1^{n-2}b_1a_1a_2)^2=(a_2a_1^{n-2}b_1a_1)^2$;
\item[$(R_{10})$] $a_1^{n-2}b_1a_1a_2a_1^{n-2}b_1a_1=(a_2a_1^{n-2}b_1a_1)^2$;
\item[$(R_{11})$] $c^3=c$;
\item[$(R_{12})$] $ca_1=ca_2$;
\item[$(R_{13})$] $a_1^{n-2}c=a_2c$;
\item[$(R_{14})$] $a_2a_1^jc=a_1^jc$, $j=1,\ldots,n-3$;
\item[$(R_{15})$] $b_1a_1c=cb_2$;
\item[$(R_{16})$] $b_1a_1^jc=a_1^jc$, $j=2,\ldots,n-3$;
\item[$(R_{17})$] $(b_1a_1)^{n-3}b_1=c^2a_2a_1^{n-4}$;
\item[$(R_{18})$] $b_2c^2=ca_2c$;
\item[$(R_{19})$] $(b_2c)^2=b_2cb_2$.
\end{enumerate}

Our goal now is to show that the monoid $\DPS_n$ is defined by the presentation $\langle A\mid R\rangle$.

\smallskip 

Let $f:A\longrightarrow \DPS_n$ be the mapping defined by
$$
a_1f=\alpha_1 ,\quad a_2f=\alpha_2 ,\quad b_1f=\beta_1 ,\quad b_2f=\beta_2 \quad\text{and}\quad cf=\gamma
$$
and let $\varphi:A^*\longrightarrow \DPS_n$ be the homomorphism of monoids that extends $f$ to $A^*$.

\smallskip 

First of all, it is a routine matter to check that:

\begin{lemma}\label{genrel}
The set of generators $\{\alpha_1,\alpha_2,\beta_1,\beta_2,\gamma\}$ of $\DPS_n$ satisfies (via $\varphi$) all the relations from $R$.
\end{lemma}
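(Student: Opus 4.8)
The plan is to verify, for each relation in $R$, that applying $\varphi$ to both sides yields equal partial transformations in $\DPS_n$. Since $\varphi$ is the monoid homomorphism extending $f$, this amounts to computing the relevant products of $\alpha_1,\alpha_2,\beta_1,\beta_2,\gamma$ (and their powers) explicitly as elements of $\I(\{0,1,\ldots,n-1\})$ and checking the claimed equalities. I would organize the check by grouping the relations according to which generators they involve.

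First I would handle the relations $(R_1)$–$(R_{10})$, which involve only $a_1,a_2,b_1$. Here I would invoke Corollary \ref{presisoin-1}: those relations are precisely the defining relations of the submonoid $\{\alpha\in\DPS_n\mid 0\in\dom(\alpha),\ 0\alpha=0\}$ under the generators $\alpha_1,\alpha_2,\beta_1$, so they are automatically satisfied by $\alpha_1,\alpha_2,\beta_1$ via the corresponding homomorphism, hence via $\varphi$. This disposes of the bulk of the list at once. Next I would deal with the commuting relations in $(R_7)$ and the idempotent relation $b_2^2=b_2$ from $(R_6)$ that involve $b_2$: since $\beta_2$ is the restriction of the identity to $\{1,\ldots,n-1\}$, it is idempotent and commutes with any $\alpha\in\DPS_n$ fixing $0$ (in particular with $\alpha_1,\alpha_2,\beta_1,\beta_2$), because $\beta_2$ acts as the identity wherever defined and its domain $\{1,\ldots,n-1\}$ is stable under each of these maps; this is a short direct check.

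Then I would turn to the relations involving $c$, namely $(R_{11})$–$(R_{19})$, which require genuine computation with $\gamma=\begin{pmatrix}0&1\\1&0\end{pmatrix}$. For these I would write down the small partial transformations $\gamma$, $\gamma^2=\begin{pmatrix}0&1\\0&1\end{pmatrix}$, and compositions like $\gamma\alpha_1$, $\gamma\alpha_2$, $\alpha_1^{n-2}\gamma$, $\beta_1\alpha_1\gamma$, etc., tracking domains and images carefully. For instance $(R_{12})$ $ca_1=ca_2$ reduces to checking $\gamma\alpha_1=\gamma\alpha_2$, and since $\dom(\gamma)=\{0,1\}$ with $0\gamma=1$, $1\gamma=0$, both sides send $0\mapsto 0\alpha_i$ and $1\mapsto 1\alpha_i$, and $\alpha_1,\alpha_2$ agree on $\{0,1\}$... actually $0\alpha_1=0$, $1\alpha_1=2$ while $0\alpha_2=0$, $1\alpha_2=2$, so they agree, giving the equality; the remaining relations are similarly mechanical once one is disciplined about partial domains. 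I would present a representative sample of these computations and assert the rest follow analogously.

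The main obstacle, as the paper already signals by calling it "a routine matter", is purely bookkeeping: the computations for the $c$-relations $(R_{13})$–$(R_{19})$ involve composing partial isometries whose domains shrink under composition, and one must be scrupulous about which points survive each composite; a single sign or index error propagates. There is no conceptual difficulty — the point is simply to exhibit enough of the composites to make the verification convincing, while deferring the fully exhaustive check to the reader or to GAP, consistent with the authors' remark about computational tools.
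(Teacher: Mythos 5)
Your proposal is correct and matches the paper, which offers no argument beyond declaring the verification ``a routine matter to check''; your direct computation of the composites (with the shortcut of invoking Corollary~\ref{presisoin-1} for the $c$-free relations $(R_1)$--$(R_{10})$ and the partial-identity argument for the $b_2$-commutations) is a sound way to carry it out. Your sample check of $(R_{12})$ is right in substance --- just note that $\gamma\alpha_i$ sends $0\mapsto 1\alpha_i$ and $1\mapsto 0\alpha_i$, and the equality holds because $\alpha_1$ and $\alpha_2$ agree on $\im(\gamma)=\{0,1\}$.
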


Notice that the previous lemma assures us that, if $w_1,w_2\in\{a_1,a_2,b_1,b_2,c\}^*$ are such that $w_1=w_2$ is a consequence of $R$, then $w_1\varphi=w_2\varphi$.

\smallskip 

The following lemma is an immediate consequence 
of Proposition \ref{provingpresentation} and Corollary \ref{presisoin-1}.

\begin{lemma}\label{a1a2b1}
Let $w_1, w_2\in\{a_1,a_2,b_1\}^*$. If $w_1\varphi=w_2\varphi$ then $w_1=w_2$ is a consequence of $R$.
\end{lemma}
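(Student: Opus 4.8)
The plan is to apply Proposition \ref{provingpresentation} together with Corollary \ref{presisoin-1}. By that corollary, the submonoid $M=\{\alpha\in\DPS_n\mid 0\in\dom(\alpha)\text{ and }0\alpha=0\}$ is defined by the presentation whose alphabet is $\{a_1,a_2,b_1\}$ and whose relations are precisely $(R_1)$--$(R_{10})$, under the map sending $a_1\mapsto\alpha_1$, $a_2\mapsto\alpha_2$, $b_1\mapsto\beta_1$. Since the relations $(R_1)$--$(R_{10})$ are among the relations of $R$ and the words $w_1,w_2$ involve only the letters $a_1,a_2,b_1$, the generating set $\{\alpha_1,\alpha_2,\beta_1\}$ of $M$ satisfies (via the restriction of $\varphi$) the relation $w_1=w_2$ exactly when $w_1\varphi=w_2\varphi$ in $\DPS_n$; and this latter condition is what we are assuming.

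So, first I would note that $\varphi$ restricted to $\{a_1,a_2,b_1\}^*$ is (the composition with $\psi$ of) the surjective homomorphism onto $M\cong\I(\{1,2,\ldots,n-1\})$ associated to the presentation in Corollary \ref{presisoin-1}. Then, applying the ``only if'' direction of Proposition \ref{provingpresentation} to that presentation of $M$, the hypothesis $w_1\varphi=w_2\varphi$ forces $w_1=w_2$ to be a consequence of the relation set $\{(R_1),\ldots,(R_{10})\}$, i.e. $w_1\mathrel{\rho_{\{(R_1),\ldots,(R_{10})\}}}w_2$ in $\{a_1,a_2,b_1\}^*$. The only subtlety to spell out is that a relation which is a consequence of a subset of relations over a sub-alphabet is \emph{a fortiori} a consequence of the whole set $R$ over the full alphabet $A$: the congruence $\rho_{\{(R_1),\ldots,(R_{10})\}}$ on $\{a_1,a_2,b_1\}^*$ is contained in the restriction to $\{a_1,a_2,b_1\}^*$ of the congruence $\rho_R$ on $A^*$, since $\rho_R$ contains all of $(R_1)$--$(R_{10})$ and is a congruence on the larger free monoid. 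Hence $w_1\mathrel{\rho_R}w_2$, that is, $w_1=w_2$ is a consequence of $R$, as required.

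There is essentially no obstacle here; the lemma is, as the text says, ``an immediate consequence'' and the proof is a bookkeeping argument linking the presentation of the submonoid $M$ to the (not yet established) presentation of $\DPS_n$. The one point worth stating explicitly is the monotonicity of ``being a consequence'' under enlarging the relation set and the alphabet, so that consequences of $(R_1)$--$(R_{10})$ in the small free monoid remain consequences of $R$ in $A^*$.
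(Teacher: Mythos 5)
Your argument is correct and is exactly the route the paper takes: the paper states this lemma as an immediate consequence of Proposition \ref{provingpresentation} and Corollary \ref{presisoin-1}, and you have simply spelled out that deduction, including the (correct) observation that ``being a consequence'' is monotone under enlarging both the relation set and the alphabet. The only nitpick is that the relation set of Corollary \ref{presisoin-1} is not ``precisely $(R_1)$--$(R_{10})$'' but rather the sub-collection of those relations not involving the letter $b_2$; this does not affect the argument, since that sub-collection is still contained in $R$.
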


Our next lemma provides us some useful relations that are consequence of $R$. 

\begin{lemma}\label{rconseq}
One has: 
\begin{enumerate}
\item The relation $a_1a_2c=c$ is a consequence of $R$;
\item The relation $c^2=(b_1a_1)^{n-3}b_1a_1^3a_2$ is a consequence of $R$;
\item The relation $b_1c=c$ is a consequence of $R$;
\item The relation $b_2c=ca_2(b_1a_1)^{n-3}b_1a_1^3a_2$ is a consequence of $R$.
\end{enumerate}
\end{lemma}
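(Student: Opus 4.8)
The plan is to derive each of the four relations in turn, using the relations from $R$ together with Lemmas~\ref{genrel} and \ref{a1a2b1}, and using earlier items of this lemma to prove later ones. All computations take place in the free monoid modulo $\rho_R$, so I will freely rewrite one word into another whenever the corresponding relation is a consequence of $R$. Since $(R_{11})$ gives $c^3=c$, I expect $c^2$ to behave like an idempotent-ish element that can be inserted or removed in the right contexts, and the "transfer" relations $(R_{12})$--$(R_{18})$ will be the workhorses for moving $c$ past the other generators.

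For item (1), I would start from $(R_{12})$, $ca_1=ca_2$, and aim to show $a_1a_2c=c$. Multiplying on suitable sides and using $(R_1)$ ($a_2^2=1$) and $(R_2)$ ($a_1^{n-1}=1$) together with $(R_{13})$ ($a_1^{n-2}c=a_2c$) and $(R_{14})$ ($a_2a_1^jc=a_1^jc$ for $1\le j\le n-3$) should let me collapse a product of the form $a_1a_2c$ down to $c$: the idea is that $a_1a_2c$ can be rewritten, via $(R_{14})$ with $j=1$ read backwards as $a_1a_2\cdot a_1^{?}c$ adjustments, into $c$. Concretely, from $(R_{14})$ with $j=1$ we have $a_2a_1c=a_1c$, and combining with $(R_{12})$ (so $ca_1=ca_2$, hence also manipulations of $a_1c$) I expect to get $a_1a_2c=c$ after using $a_2^2=1$. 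For item (3), $b_1c=c$, the natural route is $(R_{16})$ together with $(R_{15})$: $(R_{16})$ handles $b_1a_1^jc$ for $2\le j\le n-3$, and I will need to pin down $b_1c$ directly; one clean way is to write $c=a_1^{n-1}c$ (by $(R_2)$), push through using $(R_{13})$/$(R_{14})$ to bring a power of $a_1$ to the front in a form where $(R_{16})$ or $b_1a_1=$(something) applies, then use $b_1^2=b_1$ from $(R_6)$ and $a_2b_1=b_1a_2$ from $(R_7)$ to absorb the extra $b_1$. Item (1) will likely be used here to close the argument.

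For item (2), the relation $c^2=(b_1a_1)^{n-3}b_1a_1^3a_2$, the key input is $(R_{17})$, which states $(b_1a_1)^{n-3}b_1=c^2a_2a_1^{n-4}$. Right-multiplying $(R_{17})$ by $a_1^3a_2$ gives $(b_1a_1)^{n-3}b_1a_1^3a_2=c^2a_2a_1^{n-4}a_1^3a_2=c^2a_2a_1^{n-1}a_2=c^2a_2a_2=c^2$, using $(R_2)$ and $(R_1)$; so item (2) is essentially immediate from $(R_{17})$ plus $(R_1)$ and $(R_2)$. Finally, for item (4), $b_2c=ca_2(b_1a_1)^{n-3}b_1a_1^3a_2$, I would start from $(R_{15})$, $b_1a_1c=cb_2$, rearranged to express $b_2$ (or $cb_2$) and then multiply by $c$ on the left, using $(R_{18})$ $b_2c^2=ca_2c$ and $(R_{11})$ $c^3=c$ to manage the powers of $c$; substituting item (2), $c^2=(b_1a_1)^{n-3}b_1a_1^3a_2$, into an expression like $ca_2c^2$ or $ca_2c\cdot(\text{stuff})$ should produce the claimed right-hand side. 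More precisely, from $(R_{18})$ we get $b_2c^2=ca_2c$, and multiplying on the right by $c$ gives $b_2c^3=ca_2c^2$, i.e.\ $b_2c=ca_2c^2=ca_2(b_1a_1)^{n-3}b_1a_1^3a_2$ by $(R_{11})$ and item (2); so item (4) follows cleanly once items (1)--(3) and (2) in particular are available.

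The main obstacle I anticipate is item (3) (and to a lesser extent item (1)): these are the ones where there is no single relation from $R$ that directly applies, so one must chain several of the $c$-transfer relations $(R_{12})$--$(R_{17})$ together, being careful about the ranges of the index $j$ and about when $(R_{13})$ versus $(R_{14})$ or $(R_{16})$ is applicable, and inserting factors $a_1^{n-1}=1$ or $a_2^2=1$ at the right moments. Once (1) and (3) are in hand, items (2) and (4) are short calculations driven by $(R_{17})$, $(R_{18})$, $(R_{11})$, $(R_1)$ and $(R_2)$. I would present (2) first (since it is the most self-contained), then (1), then (3), then (4), though the statement lists them in the order (1),(2),(3),(4); I will follow the stated order but allow (2) to be used in the proof of (4).
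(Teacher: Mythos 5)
Your derivations of items (2) and (4) are correct and coincide with the paper's: (2) is $(R_{17})$ right-multiplied by $a_1^3a_2$ together with $(R_1)$ and $(R_2)$, and (4) is $b_2c=b_2c^3=ca_2c^2=ca_2(b_1a_1)^{n-3}b_1a_1^3a_2$ via $(R_{11})$, $(R_{18})$ and item (2). Item (1) is fine in substance, although the concrete chain you single out ($(R_{14})$ with $j=1$ combined with $(R_{12})$) is not the one that works; the one-line derivation is $a_1a_2c=a_1\cdot a_1^{n-2}c=a_1^{n-1}c=c$, using only $(R_{13})$ and $(R_2)$, both of which you do list among your ingredients.

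The genuine gap is item (3). Your plan is to obtain $b_1c=c$ from $(R_{15})$, $(R_{16})$, $(R_{13})$, $(R_{14})$ and $c=a_1^{n-1}c$, but none of these relations can strip the leading $b_1$: $(R_{16})$ only treats $b_1a_1^jc$ for $2\leqslant j\leqslant n-3$, whereas writing $b_1c=b_1a_1^{n-1}c$ leaves you at exponent $n-1$, outside that range; and every attempt to lower the exponent via $(R_{13})$ or $(R_{14})$ (e.g. $b_1a_1^{n-1}c=b_1a_1\cdot a_1^{n-2}c=b_1a_1a_2c$, or $b_1c=b_1a_2^2c=a_2b_1a_1^{n-2}c$) cycles straight back to $b_1c$, because $a_1a_2c=c$ and $a_2c=a_1^{n-2}c$. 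The relation that actually does the job is $(R_{17})$, through item (2): since $c^2=(b_1a_1)^{n-3}b_1a_1^3a_2$ is a word \emph{beginning with} $b_1$, idempotency of $b_1$ from $(R_6)$ gives $b_1c^2=c^2$, and then $b_1c=b_1c^3=(b_1c^2)c=c^3=c$ by $(R_{11})$. You never bring item (2), $(R_{17})$ or $(R_{11})$ into your argument for (3), so as written that part of the proof does not close; and since you yourself flag (3) as the hard case, this is a missing idea rather than a suppressed routine computation.
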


\begin{proof}
1. It follows from relations $(R_2)$ and $(R_{13})$ that
$a_1a_2c=a_1a_1^{n-2}c=1c=c$, 
which implies that $a_1a_2c=c$ is a consequence of $R$.

2. From relations $(R_1)$, $(R_2)$ and $(R_{17})$ we can deduce that $c^2=(b_1a_1)^{n-3}b_1a_1^3a_2$ is a consequence of $R$, since 
$$
c^2=c^21=c^2a_2^2=c^2a_21a_2=c^2a_2a_1^{n-1}a_2=(b_1a_1)^{n-3}b_1a_1^3a_2.
$$ 

3. If we consider the relations $(R_6)$, $(R_{11})$ and (from 2) $c^2=(b_1a_1)^{n-3}b_1a_1^3a_2$, then we obtain
$$
b_1c=b_1c^3=b_1(b_1a_1)^{n-3}b_1a_1^3a_2c=(b_1a_1)^{n-3}b_1a_1^3a_2c=c^3=c, 
$$ 
whence $b_1c=c$ is a consequence of $R$.

4. Finally, by considering the relations $(R_{11})$, $(R_{18})$ and $c^2=(b_1a_1)^{n-3}b_1a_1^3a_2$, we get
$$
b_2c=b_2c^3=ca_2c^2=ca_2(b_1a_1)^{n-3}b_1a_1^3a_2
$$ 
and so $b_2c=ca_2(b_1a_1)^{n-3}b_1a_1^3a_2$ is a consequence of $R$, as required. 
\end{proof}

Let $w\in\{a_1,a_2,b_1,b_2,c\}^*$ and $x\in\{a_1,a_2,b_1,b_2,c\}$. We denote by $|w|_x$ the number of occurrences of the letter $x$ in the word $w$.

\begin{lemma}\label{0b2}
Let $w\in\{a_1,a_2,b_1,b_2\}^*$. Then $|w|_{b_2}=0$ if and only if $0\in\dom(w\varphi)$.
\end{lemma}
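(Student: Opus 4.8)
The plan is to prove Lemma~\ref{0b2} by induction on the length of the word $w\in\{a_1,a_2,b_1,b_2\}^*$, analysing how post-multiplication by each of the four generators affects membership of $0$ in the domain. The key observation is that among the relevant generators, $\alpha_1,\alpha_2,\beta_1$ all fix $0$ (they lie in the submonoid $M=\{\alpha\in\DPS_n\mid 0\in\dom(\alpha),\,0\alpha=0\}$ of Proposition~\ref{subiso}), whereas $\beta_2=\begin{pmatrix}1&2&\cdots&n-1\\1&2&\cdots&n-1\end{pmatrix}$ has $0\notin\dom(\beta_2)$ and, crucially, $0\notin\im(\beta_2)$ as well. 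So $\beta_2$ is the only one of the four generators that can ``destroy'' the point $0$, and once destroyed it can never be recovered by further multiplication on the right by any of $\alpha_1,\alpha_2,\beta_1,\beta_2$.

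First I would establish the easy direction. If $|w|_{b_2}=0$ then $w\in\{a_1,a_2,b_1\}^*$, so $w\varphi$ is a product of elements of $M$; since $M$ is a submonoid of $\DPS_n$ all of whose elements contain $0$ in their domain (and fix it), we get $0\in\dom(w\varphi)$ — in fact $0(w\varphi)=0$. For the converse, I argue the contrapositive: if $|w|_{b_2}\geqslant1$, write $w\equiv ub_2v$ where $u\in\{a_1,a_2,b_1\}^*$ is the prefix before the first occurrence of $b_2$ and $v\in\{a_1,a_2,b_1,b_2\}^*$. Then $w\varphi=(u\varphi)\beta_2(v\varphi)$. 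Since $u\varphi\in M$ fixes $0$, we have $0\in\dom(w\varphi)$ iff $0\in\dom((u\varphi)\beta_2(v\varphi))$ iff $0(u\varphi)=0\in\dom(\beta_2(v\varphi))$ iff $0\in\dom(\beta_2)$ — but $0\notin\dom(\beta_2)$, so $0\notin\dom(w\varphi)$. (A little care: one must observe $\dom(\beta_2(v\varphi))\subseteq\dom(\beta_2)$, which is immediate since the domain of a composite is contained in the domain of the first factor.)

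Actually the cleanest presentation is a direct induction on $|w|$: the base case $w\equiv1$ gives the identity, with $0\in\dom$ and $|w|_{b_2}=0$. For the inductive step write $w\equiv w'x$ with $x\in\{a_1,a_2,b_1,b_2\}$; then $w\varphi=(w'\varphi)(xf)$ and $\dom(w\varphi)=\{y\in\dom(w'\varphi)\mid y(w'\varphi)\in\dom(xf)\}$. If $x\in\{a_1,a_2,b_1\}$ then $xf\in M$ fixes $0$, so $0\in\dom(w\varphi)$ iff $0\in\dom(w'\varphi)$, and $|w|_{b_2}=|w'|_{b_2}$, so the equivalence is inherited from the inductive hypothesis. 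If $x\equiv b_2$ then $xf=\beta_2$ with $0\notin\dom(\beta_2)$, so $0\notin\dom(w\varphi)$ regardless, while $|w|_{b_2}=|w'|_{b_2}+1\geqslant1$; thus both sides of the equivalence are false. This completes the induction.

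I do not expect a genuine obstacle here — the statement is a bookkeeping fact reflecting that $\beta_2$ ``kills'' the vertex $0$ irreversibly. The only point requiring a moment's attention is making sure one invokes the right structural fact (namely $0\notin\dom(\beta_2)$, so that once a $b_2$ appears the domain can never contain $0$ again, no matter what follows), and recording explicitly that $\alpha_1,\alpha_2,\beta_1$ all belong to $M$ and hence fix $0$, which is exactly the content of Proposition~\ref{subiso} together with the explicit forms of $\alpha_1,\alpha_2,\beta_1$ given before Proposition~\ref{dpsgen}.
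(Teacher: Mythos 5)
Your proposal is correct and rests on exactly the same observation as the paper's proof: $\alpha_1,\alpha_2,\beta_1$ all fix $0$ while $0\notin\dom(\beta_2)$, so the easy direction follows from $w\varphi\in\langle\alpha_1,\alpha_2,\beta_1\rangle$ and the converse from the fact that a single occurrence of $b_2$ forces $\dom(w\varphi)\subseteq\{1,\ldots,n-1\}$. The only cosmetic difference is that the paper derives $w\varphi=\beta_2(w\varphi)$ by using relations $(R_6)$ and $(R_7)$ to pull a $b_2$ to the front, whereas you split the word at the first occurrence of $b_2$ and argue semantically; both are valid.
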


\begin{proof}
First, suppose $|w|_{b_2}=0$. Then $w\in\{a_1,a_2,b_1\}^*$, which implies that 
$w\varphi\in\langle\alpha_1,\alpha_2,\beta_1\rangle = \{\alpha\in\DPS_n\mid \mbox{$0\in\dom(\alpha)$ and $0\alpha=0$}\}$, 
whence $0\in\dom(w\varphi)$.

Conversely, admit that $|w|_{b_2}\geqslant 1$. It follows from relations $(R_6)$ and $(R_7)$ that $w=b_2w$ is a consequence of $R$. Then $w\varphi=(b_2w)\varphi=(b_2\varphi)(w\varphi)=\beta_2(w\varphi)$. Therefore, $\dom(w\varphi)\subseteq\dom(\beta_2)=\{1,2, \ldots,n-1\}$ and so $0\notin\dom(w\varphi)$.
\end{proof}

\begin{lemma}\label{b2a1a2b1}
Let $w_1,w_2\in\{a_1,a_2,b_1\}^*$. If $(b_2w_1)\varphi=(b_2w_2)\varphi$ then $w_1\varphi=w_2\varphi$.
\end{lemma}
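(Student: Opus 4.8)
The plan is to exploit the explicit form of $\beta_2=b_2\varphi$, namely that $\beta_2$ is the identity partial transformation on $\{1,2,\ldots,n-1\}$ (and is undefined at $0$), together with the fact that a word over $\{a_1,a_2,b_1\}$ always evaluates inside the submonoid $M=\{\alpha\in\DPS_n\mid 0\in\dom(\alpha)\text{ and }0\alpha=0\}$.

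First I would note that, since $w_1,w_2\in\{a_1,a_2,b_1\}^*$ and $\langle\alpha_1,\alpha_2,\beta_1\rangle=M$ by Proposition \ref{dpsgen}, both $w_1\varphi$ and $w_2\varphi$ lie in $M$; in particular $0\in\dom(w_i\varphi)$ and $0(w_i\varphi)=0$ for $i=1,2$ (this is also immediate from Lemma \ref{0b2}). Writing $\xi_i=(w_i\varphi)|_{\{1,\ldots,n-1\}}\in\I(\{1,\ldots,n-1\})$, we thus have $w_i\varphi=\bar\xi_i=\xi_i\psi$. Next I would compute $(b_2w_i)\varphi=\beta_2(w_i\varphi)$: since $\beta_2$ is the identity on $\{1,\ldots,n-1\}$ and undefined at $0$, composing it on the left with $w_i\varphi$ simply deletes $0$ from the domain, so $(b_2w_i)\varphi=(w_i\varphi)|_{\dom(w_i\varphi)\setminus\{0\}}=\xi_i$.

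Finally, from the hypothesis $(b_2w_1)\varphi=(b_2w_2)\varphi$ I obtain $\xi_1=\xi_2$, and applying the injective homomorphism $\psi$ gives $w_1\varphi=\bar\xi_1=\bar\xi_2=w_2\varphi$, as required. Equivalently, without invoking $\psi$: $\xi_1=\xi_2$ forces $\dom(w_1\varphi)\setminus\{0\}=\dom(w_2\varphi)\setminus\{0\}$ and agreement of $w_1\varphi$ and $w_2\varphi$ off $0$, while both fix $0$, so the two partial transformations coincide. There is no real obstacle here; the only point requiring care is the bookkeeping of domains under left-multiplication by the non-surjective idempotent $\beta_2$, which is precisely what makes $b_2$ a faithful ``marker'' for whether $0$ belongs to the domain.
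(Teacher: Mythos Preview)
Your proof is correct and follows essentially the same approach as the paper: both arguments observe that $(b_2w_i)\varphi=\beta_2(w_i\varphi)=(w_i\varphi)|_{\dom(w_i\varphi)\setminus\{0\}}$ lies in $\I(\{1,\ldots,n-1\})$ and that applying the injective homomorphism $\psi$ recovers $w_i\varphi$, so the hypothesis forces $w_1\varphi=w_2\varphi$. The paper compresses this into the single identity $((b_2w_i)\varphi)\psi=w_i\varphi$, whereas you unpack the intermediate element $\xi_i$ explicitly, but the substance is the same.
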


\begin{proof}
It suffices to observe that 
$((b_2w_i)\varphi)\psi=(\beta_2(w_i\varphi))\psi=(w_i\varphi|_{\dom(w_i\varphi)\setminus\{0\}})\psi = w_i\varphi$, for $i=1,2$. 
\end{proof}

Now, we can prove: 

\begin{lemma}\label{a1a2b1b2}
Let $w_1, w_2\in\{a_1,a_2,b_1,b_2\}^*$. If $w_1\varphi=w_2\varphi$ then $w_1=w_2$ is a consequence of $R$.
\end{lemma}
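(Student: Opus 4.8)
The plan is to reduce the statement to Lemma~\ref{a1a2b1} by a careful case analysis on the number of occurrences of $b_2$ in $w_1$ and $w_2$. First I would observe, using Lemma~\ref{0b2}, that $w_1\varphi=w_2\varphi$ forces $\dom(w_1\varphi)=\dom(w_2\varphi)$ and hence $0\in\dom(w_1\varphi)$ if and only if $0\in\dom(w_2\varphi)$, which means $|w_1|_{b_2}=0$ if and only if $|w_2|_{b_2}=0$. So there are exactly two cases to treat.

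In the first case, $|w_1|_{b_2}=|w_2|_{b_2}=0$, i.e.\ $w_1,w_2\in\{a_1,a_2,b_1\}^*$, and the conclusion is immediate from Lemma~\ref{a1a2b1}. In the second case, $|w_1|_{b_2}\geqslant 1$ and $|w_2|_{b_2}\geqslant 1$. Here the idea is to push all the $b_2$'s to the front: using relations $(R_6)$ (namely $b_2^2=b_2$) and $(R_7)$ (namely $b_2a_2=a_2b_2$, $b_2a_1=a_1b_2$ and $b_2b_1=b_1b_2$), one shows that $w_i=b_2w_i'$ is a consequence of $R$ for some $w_i'\in\{a_1,a_2,b_1\}^*$ (concretely, $w_i'$ is $w_i$ with every $b_2$ deleted, and repeated applications of the commuting relations followed by $b_2^2=b_2$ collapse the several copies of $b_2$ into a single leading one). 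Then $w_1\varphi=w_2\varphi$ gives $(b_2w_1')\varphi=(b_2w_2')\varphi$, and Lemma~\ref{b2a1a2b1} yields $w_1'\varphi=w_2'\varphi$. Now Lemma~\ref{a1a2b1} applies to $w_1',w_2'\in\{a_1,a_2,b_1\}^*$, so $w_1'=w_2'$ is a consequence of $R$; prepending $b_2$ shows $b_2w_1'=b_2w_2'$ is a consequence of $R$, and combining with $w_1=b_2w_1'$ and $w_2=b_2w_2'$ gives that $w_1=w_2$ is a consequence of $R$, as required.

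The routine but slightly fiddly part — and the only real obstacle — is the bookkeeping needed to verify that $w_i=b_2w_i'$ really is a consequence of $R$: one must be careful that the relations in $(R_7)$ allow $b_2$ to slide past \emph{every} other letter of the alphabet $\{a_1,a_2,b_1\}$ that can appear in $w_i$ (which they do, since $(R_7)$ lists $b_2$ commuting with each of $a_1$, $a_2$, $b_1$), so that a leftmost $b_2$ can be brought to the front, and then any further $b_2$'s, once adjacent to the leading one, are absorbed via $b_2^2=b_2$; iterating handles an arbitrary number of occurrences. Everything else is a direct appeal to the earlier lemmas, so no further computation is needed.
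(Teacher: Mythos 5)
Your proof is correct and follows essentially the same route as the paper: split on whether $|w_i|_{b_2}$ is zero (via Lemma~\ref{0b2}), handle the zero case by Lemma~\ref{a1a2b1}, and in the positive case normalise each $w_i$ to $b_2w_i'$ using $(R_6)$ and $(R_7)$ before applying Lemmas~\ref{b2a1a2b1} and~\ref{a1a2b1}. The only difference is that you spell out the commuting-and-absorbing bookkeeping that the paper leaves implicit.
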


\begin{proof}
First, observe that, by Lemma \ref{0b2}, we have 
$|w_1|_{b_2}=0$ if and only if $|w_2|_{b_2}=0$. 

If $|w_1|_{b_2}=|w_2|_{b_2}=0$ then $w_1,w_2\in\{a_1,a_2,b_1\}^*$ and so, by Lemma \ref{a1a2b1}, 
$w_1=w_2$ is a consequence of $R$. 

On the other hand, admit that $|w_1|_{b_2}, |w_2|_{b_2}>0$. 
Then, it follows from relations $(R_6)$ and $(R_7)$ that there exist $w'_1,w'_2\in\{a_1,a_2,b_1\}^*$ such that $w_1=b_2w'_1$ and $w_2=b_2w'_2$ are consequences of $R$. Hence $(b_2w'_1)\varphi=w_1\varphi=w_2\varphi=(b_2w'_2)\varphi$. 
Thus, by Lemma \ref{b2a1a2b1}, 
we have $w'_1\varphi=w'_2\varphi$ and so, by Lemma \ref{a1a2b1}, we conclude that $w'_1=w'_2$ is a consequence of $R$. Therefore, $b_2w'_1=b_2w'_2$ is a consequence of $R$, which implies that $w_1=w_2$ is a consequence of $R$, as required.
\end{proof}

Next, we continue with a series of lemmas now also involving the letter $c$.

\begin{lemma} \label{cwc=w1cw2c}
Let $w\in\{a_1,a_2,b_1,b_2\}^*$. Then there exist $w_1,w_2\in\{a_1,a_2,b_1\}^*$ such that $cwc=w_1cw_2c$ is a consequence of $R$.
\end{lemma}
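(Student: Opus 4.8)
The plan is to push each occurrence of the letter $c$ "to the right" through any block of letters from $\{a_1,a_2,b_1,b_2\}$, reducing everything to the normal form $w_1cw_2c$ with $w_1,w_2\in\{a_1,a_2,b_1\}^*$. The crucial observation is that, by Lemma~\ref{rconseq}(1), the relation $a_1a_2c=c$ is a consequence of $R$; since $\alpha_1\alpha_2$ generates, together with suitable powers, the relevant subgroup, one expects that \emph{every} word over $\{a_1,a_2,b_1,b_2\}$ that fixes $0$ can be absorbed on the left of $c$ up to a controlled remainder. Concretely, I would first show, by induction on the length of $u\in\{a_1,a_2,b_1\}^*$, that there is $u'\in\{a_1,a_2,b_1\}^*$ with $uc=u'c$ a consequence of $R$ where $u'$ is in some chosen canonical form (for instance with $|u'|$ bounded), using relations $(R_{11})$--$(R_{17})$ together with Lemma~\ref{rconseq}(1),(3) to simplify. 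The relation $b_1c=c$ from Lemma~\ref{rconseq}(3) disposes of leading $b_1$'s, and $(R_{14})$, $(R_{16})$ handle the mixed words.

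\textbf{Handling the letter $b_2$.} Given a general $w\in\{a_1,a_2,b_1,b_2\}^*$, the word has the shape $v_0 c'$ where here there is no $c$, so really $w$ itself is such a word and we must form $cwc$. I would argue as follows. If $|w|_{b_2}=0$ then $w\in\{a_1,a_2,b_1\}^*$ and we may simply take $w_1\equiv 1$, $w_2\equiv w$, so $cwc\equiv cw_2c=w_1cw_2c$ trivially. If $|w|_{b_2}\ge 1$, then by $(R_6)$ and $(R_7)$ (exactly as in the proof of Lemma~\ref{a1a2b1b2}) the relation $w=b_2w''$ is a consequence of $R$ for some $w''\in\{a_1,a_2,b_1\}^*$; hence $cwc=cb_2w''c$ is a consequence of $R$. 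Now the single $b_2$ sits between two $c$'s, and by Lemma~\ref{rconseq}(4) we have $b_2c=ca_2(b_1a_1)^{n-3}b_1a_1^3a_2$ as a consequence of $R$; but we want $cb_2$, not $b_2c$. The right move is instead to use $(R_{15})$, $b_1a_1c=cb_2$, or rather its role in producing $cb_2$: from $(R_{15})$, $cb_2 = b_1a_1c$, so $cb_2w''c = b_1a_1cw''c$, and now $b_1a_1\in\{a_1,a_2,b_1\}^*$ and $w''\in\{a_1,a_2,b_1\}^*$, giving the desired form with $w_1\equiv b_1a_1$ and $w_2\equiv w''$ — provided the middle $c$ is genuinely isolated, i.e. $cw''c$ already has the required shape.

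\textbf{Assembling the induction.} Thus the statement reduces to the case $w\in\{a_1,a_2,b_1\}^*$, where $cwc=1\cdot c\cdot w\cdot c$ works verbatim. Combining: for arbitrary $w\in\{a_1,a_2,b_1,b_2\}^*$, either $|w|_{b_2}=0$ and we take $(w_1,w_2)=(1,w)$, or $|w|_{b_2}>0$, we rewrite $w$ as $b_2w''$ (consequence of $R$ via $(R_6),(R_7)$), then use $(R_{15})$ in the form $cb_2=b_1a_1c$ to obtain $cwc=b_1a_1\,c\,w''\,c$, which is of the required form with $w_1\equiv b_1a_1$ and $w_2\equiv w''\in\{a_1,a_2,b_1\}^*$. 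I expect the only real subtlety to be bookkeeping: making sure that the rewriting $w=b_2w''$ really does pull \emph{all} copies of $b_2$ out to the front and leave a word free of $b_2$, which is exactly what $(R_7)$'s commuting relations $b_2a_1=a_1b_2$, $b_2a_2=a_2b_2$, $b_2b_1=b_1b_2$ together with idempotency $b_2^2=b_2$ from $(R_6)$ deliver. No genuinely hard computation is involved; the work is in citing the correct relations in the correct order.
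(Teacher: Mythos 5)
Your argument is correct and is essentially the paper's own proof: the case $|w|_{b_2}=0$ is trivial, and for $|w|_{b_2}\geqslant 1$ you use $(R_6)$ and $(R_7)$ to rewrite $w$ as $b_2w''$ with $w''\in\{a_1,a_2,b_1\}^*$ and then $(R_{15})$ in the form $cb_2=b_1a_1c$ to obtain $cwc=b_1a_1\,c\,w''\,c$. (The paper merely splits your second case into the sub-cases $w\in\{b_2\}^+$ and the rest; your exploratory first paragraph about an induction is unnecessary, but the argument you actually assemble is complete.)
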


\begin{proof}
We divide this proof into three cases.

\smallskip 

Case 1: If $w\in\{a_1,a_2,b_1\}^*$ then it suffices to take $w_1\equiv 1$ and $w_2 \equiv w$.

\smallskip

Case 2: Assume $w\in\{b_2\}^+$. 
It follows from relations $(R_6)$ that $cwc=cb_2c$ is a consequence of $R$. 
Then, if we take $w_1\equiv b_1a_1$ and $w_2\equiv 1$ and consider the relation $(R_{15})$, 
we obtain
$$
cwc=cb_2c=cb_21c=b_1a_1c1c=w_1cw_2c
$$
and so $cwc=w_1cw_2$ is a consequence of $R$.

\smallskip

Case 3: Assume $w\in\{a_1,a_2,b_1,b_2\}^+\setminus(\{b_2\}^+\cup\{a_1,a_2,b_1\}^+)$. 
It follows from relations $(R_6)$ and $(R_7)$ that there exists $w_2 \in\{a_1,a_2,b_1\}^+$ such that $cwc=cb_2w_2c$ is a consequence of $R$.
Then, being $w_1\equiv b_1a_1$, by applying the relation $(R_{15})$, we have 
$$
cwc=cb_2w_2c=b_1a_1cw_2c=w_1cw_2c,
$$
which implies that $cwc=w_1cw_2c$ is a consequence of $R$.
\end{proof}

\begin{lemma}\label{cwc=w1c2w2}
Let $w\in\{a_1,a_2,b_1,b_2\}^*$. Then there exist $w_1,w_2\in\{a_1,a_2,b_1,b_2\}^*$ such that $cwc=w_1c^2w_2$ is a consequence of $R$.
\end{lemma}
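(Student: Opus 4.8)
The plan is to combine Lemma~\ref{cwc=w1cw2c} with a normalisation of the words $vc$ for $v\in\{a_1,a_2,b_1\}^*$. First apply Lemma~\ref{cwc=w1cw2c} to obtain $w'_1,w'_2\in\{a_1,a_2,b_1\}^*$ such that $cwc=w'_1cw'_2c$ is a consequence of $R$. Since $w'_1$ is already free of the letter $c$, it is enough to show that $cw'_2c$ is a consequence of $R$ equal to a word of the form $uc^2v$ with $u,v\in\{a_1,a_2,b_1,b_2\}^*$; one then takes $w_1\equiv w'_1u$ and $w_2\equiv v$ in the statement.

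So the core of the proof is the claim that, for every $v\in\{a_1,a_2,b_1\}^*$, the word $vc$ is a consequence of $R$ equal to one of the finitely many words $c,\ a_1c,\ a_1^2c,\ \ldots,\ a_1^{n-3}c,\ a_2c,\ cb_2$ (which is consistent with the fact that, in $\DPS_n$, the element $(vc)\varphi$ must be $\begin{pmatrix}0&i\\1&0\end{pmatrix}$ for some $1\le i\le n-1$ or $\begin{pmatrix}0\\1\end{pmatrix}$). I would establish this by induction on $|v|$, examining the letters of $vc$ just before the final $c$: a trailing $b_1$ is erased via $b_1c=c$ (Lemma~\ref{rconseq}(3)); a trailing $a_1a_2$ is erased via $a_1a_2c=c$ (Lemma~\ref{rconseq}(1)); a trailing power of $a_1$ is reduced modulo $n-1$ using $(R_2)$, and $a_1^{n-2}$ before $c$ becomes $a_2$ by $(R_{13})$; a trailing block $a_2a_1^j$ is shortened using $(R_{14})$, $(R_{13})$ and $(R_1)$; and a trailing block $b_1a_1^j$ is dealt with using $(R_{15})$, $(R_{16})$, $(R_{13})$, and the commutation $a_2b_1=b_1a_2$ from $(R_7)$. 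Granting the claim, it remains to prepend $c$ to each normal form: $c\cdot c=c^2$; $c\cdot cb_2=c^2b_2$; and, for $1\le j\le n-2$, repeated use of $(R_{12})$ (to replace $ca_1$ by $ca_2$) and $(R_{14})$ (to absorb the resulting $a_2$) gives $ca_1^jc=ca_1c=ca_2c=b_2c^2$, the last equality by $(R_{12})$ and $(R_{18})$; likewise $c\cdot a_2c=ca_2c=b_2c^2$. In every case the outcome has the required shape $uc^2v$.

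The main obstacle is precisely this normalisation claim for $vc$: each individual relation is elementary, but several of them overlap as rewriting rules acting on the right-hand end of a word, so the argument amounts to a confluence-and-termination check, and one must be attentive to the boundary cases (small values of $n$, the exact ranges of $j$ in $(R_{14})$ and $(R_{16})$, and the distinction between $a_1^{n-2}$ and $a_2$). An alternative that sidesteps some of this bookkeeping is to argue semantically through Proposition~\ref{subiso}: since $w'_2\varphi$ fixes $0$, the element $(cw'_2c)\varphi=\gamma(w'_2\varphi)\gamma$ is $\gamma^2$ when $1\in\dom(w'_2\varphi)$ with $1(w'_2\varphi)=1$, and is the rank-one idempotent $\begin{pmatrix}1\\1\end{pmatrix}=(b_2c^2)\varphi$ otherwise; it then only remains to exhibit, in each of these two cases, a derivation from $R$ witnessing the corresponding collapse, which is exactly the role of relations $(R_{12})$--$(R_{18})$. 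Once $cw'_2c$ has been brought to the form $uc^2v$, the proof is complete.
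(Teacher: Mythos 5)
Your overall strategy is the same as the paper's (first invoke Lemma~\ref{cwc=w1cw2c}, then normalise the middle word by induction on its length, using $b_1c=c$, $a_1a_2c=c$, $(R_{13})$--$(R_{16})$ for the suffix adjacent to the final $c$, and finally $(R_{12})$, $(R_{18})$ to create the $c^2$). But the intermediate claim you isolate --- that every $vc$ with $v\in\{a_1,a_2,b_1\}^*$ reduces, by these moves, to one of $c,\ a_1c,\ \ldots,\ a_1^{n-3}c,\ a_2c,\ cb_2$ --- is not established by the case analysis you describe, and your rewriting procedure does not stay inside that list. The culprit is the $(R_{15})$ case: a trailing block $b_1a_1$ turns $v''b_1a_1c$ into $v''cb_2$, i.e.\ it pushes a $b_2$ to the \emph{right} of the final $c$. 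If the inductive normal form of $v''c$ is $a_1^ic$ or $a_2c$ rather than $cb_2$, you are left with $a_1^icb_2$ or $a_2cb_2$, which is outside your list and is not reduced by any of the moves you cite (applying $(R_{15})$ backwards just loops). A concrete instance is $v\equiv a_1b_1a_1$, for which your procedure terminates at $a_1cb_2$. These words \emph{are} equal to $cb_2$ modulo $R$ (e.g.\ $a_1cb_2=a_1b_1a_1c=a_1a_2b_1a_2a_1c=a_1a_2b_1a_1c=a_1a_2cb_2=cb_2$, using $(R_{15})$, $(R_1)$, $(R_7)$, $(R_{14})$ and Lemma~\ref{rconseq}), but that extra derivation is precisely what is missing, and your closing ``semantic'' alternative does not supply it: knowing that $(cw_2'c)\varphi$ is $\gamma^2$ or $\beta_2\gamma^2$ is no substitute for exhibiting the derivation, which is the whole content of the lemma.

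The paper sidesteps this by formulating the induction for the entire word $u_1cu_2c$ with target shape $w_1c^2w_2$, $w_2\in\{a_1,a_2,b_1,b_2\}^*$: in its Case~9 the $b_2$ produced by $(R_{15})$ simply lands to the right of the final $c$ and is absorbed into $w_2$, so no further reduction is needed. Your version can be repaired either by adopting that formulation, or by enlarging your normal-form list to allow an optional trailing $b_2$ (i.e.\ normal forms $Nb_2^{\varepsilon}$ with $N\in\{c,a_1c,\ldots,a_1^{n-3}c,a_2c\}$ and $\varepsilon\in\{0,1\}$, closed under your moves via $b_2^2=b_2$), after which prepending $c$ still yields $uc^2v$ since $ca_1^jcb_2=b_2c^2b_2$. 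As written, however, the key step is a gap, not merely bookkeeping.
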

\begin{proof}
By Lemma \ref{cwc=w1cw2c} there exist $u_1,u_2\in\{a_1,a_2,b_1\}^*$ such that $cwc=u_1cu_2c$ is a consequence of $R$. 
We complete the proof by showing that there exist $w_1,w_2\in\{a_1,a_2,b_1,b_2\}^*$ such that $u_1cu_2c=w_1c^2w_2$ is a consequence of $R$. 
We will proceed by induction on the length $|u_2|$ of $u_2$.

Suppose that $|u_2|=0$. Then $u_2\equiv 1$. It is clear that, if we take $w_1\equiv u_1$ and $w_2\equiv 1$, then $u_1cu_2c=w_1c^2w_2$ is a consequence of $R$.

Let $k\geqslant 1$ and assume that, for all $u\in\{a_1,a_2,b_1\}^*$ such that $|u|<k$, there exist $w_1,w_2\in\{a_1,a_2,b_1,b_2\}^*$ such that $u_1cuc=w_1c^2w_2$.

Suppose that $|u_2|=k$. 
As a consequence of relations $(R_1)$, $(R_2)$ and $(R_6)$, we deduce that $u_2=u_3$ is a consequence of $R$, 
for some subword $u_3\in\{a_1,a_2,b_1\}^*$ of $u_2$ such that none of the words of 
$\{a_1^i \mid i\geqslant n-1\}\cup\{a_2^i \mid i\geqslant 2\}\cup\{b_1^i \mid i\geqslant 2\}$ is a factor of $u_3$. 
Notice that $|u_3|\leqslant |u_2|$.

If $|u_3|<|u_2|$ then, by the induction hypothesis, there exist $w_1,w_2\in\{a_1,a_2,b_1,b_2\}^*$ such that $u_1cu_3c=w_1c^2w_2$ is a consequence of $R$. Clearly, since $u_2=u_3$ is a consequence of $R$, we have that $u_1cu_2c=u_1cu_3c$ is a consequence of $R$ and so $u_1cu_2c=w_1c^2w_2$ is a consequence of $R$.

Now, consider that $|u_3|=|u_2|$. Thus $u_2\equiv u_3$ and, consequently, none of the words of 
$\{a_1^i \mid i\geqslant n-1\}\cup\{a_2^i \mid i\geqslant 2\}\cup\{b_1^i \mid i\geqslant 2\}$ is a factor of $u_2$.

Let $u_4\in\{a_1,a_2,b_1\}^+$ be a suffix of $u_2$ and let $u_5\in\{a_1,a_2,b_1\}^*$ be such that $u_2=u_5u_4$. Then, we can choose $u_4$ and $u_5$  satisfying one of the following cases, which we study separately, concluding the proof.

\smallskip

Case 1: Assume that $u_4\equiv b_1$. By Lemma \ref{rconseq}, $b_1c=c$ is a consequence of $R$ and so we have 
$$
u_1cu_2c=u_1cu_5u_4c=u_1cu_5b_1c=u_1cu_5c,
$$
whence $u_1cu_2c=u_1cu_5c$ is a consequence of $R$. 
Since $|u_5|=|u_2|-1<|u_2|$, by the induction hypothesis there exist $w_1,w_2\in\{a_1,a_2,b_1,b_2\}^*$ such that $u_1cu_5c=w_1c^2w_2$ is a consequence of $R$.
Thus $u_1cu_2c=w_1c^2w_2$ is a consequence of $R$.

\smallskip

Case 2: Suppose that $u_4\equiv a_2$ and $u_5\equiv 1$. Then $u_2\equiv a_2$ and, by relation $(R_{18})$, we have 
$$
u_1cu_2c=u_1ca_2c=u_1b_2c^2=w_1c^2w_2,
$$
where $w_1 \equiv u_1b_2$ and $w_2 \equiv 1$. Hence $u_1cu_2c=w_1c^2w_2$ is a consequence of $R$.

\smallskip

Case 3: Take $u_4\equiv a_1a_2$. It follows from Lemma \ref{rconseq} that $a_1a_2c=c$ is a consequence of $R$ and so 
$$
u_1cu_2c=u_1cu_5u_4c=u_1cu_5a_1a_2c=u_1cu_5c.
$$
Then, $u_1cu_2c=u_1cu_5c$ is a consequence of $R$. 
Observe that $|u_5|=|u_2|-2<|u_2|$. Hence, by the induction hypothesis, 
there exist $w_1,w_2\in\{a_1,a_2,b_1,b_2\}^*$ such that $u_1cu_5c=w_1c^2w_2$ is a consequence of $R$.
Therefore, $u_1cu_2c=w_1c^2w_2$ is a consequence of $R$.

\smallskip

Case 4: Let $u_4\equiv b_1a_2$. By Lemma \ref{rconseq} we have that $b_1c=c$ is a consequence of $R$. 
By considering the relations $(R_7)$, we obtain 
$$
u_1cu_2c=u_1cu_5u_4c=u_1cu_5b_1a_2c=u_1cu_5a_2b_1c=u_1cu_5a_2c.
$$
Then, $u_1cu_2c=u_1cu_5a_2c$ is a consequence of $R$. 
Notice that $|u_5a_2|=|u_2|-1<|u_2|$. So, by the induction hypothesis,  
there exist $w_1,w_2\in\{a_1,a_2,b_1,b_2\}^*$ such that $u_1cu_5a_2c=w_1c^2w_2$ is a consequence of $R$.
Hence, $u_1cu_2c=w_1c^2w_2$ is a consequence of $R$.

\smallskip

Case 5: Assume that  $u_4\equiv a_1$ and $u_5\equiv 1$. Then $u_2\equiv a_1$. From relation $(R_{12})$ we have 
$$
u_1cu_2c=u_1ca_1c=u_1ca_2c,
$$
which implies that $u_1cu_2c=u_1ca_2c$ is a consequence of $R$. By Case 2 there exist $w_1,w_2\in\{a_1,a_2,b_1,b_2\}^*$ such that $u_1ca_2c=w_1c^2w_2$ is a consequence of $R$. Hence $u_1cu_2c=w_1c^2w_2$ is a consequence of $R$.

\smallskip

Case 6: Suppose that  $u_4\equiv a_1^j$, for some $j\in\{2,\ldots,n-3\}$, and $u_5\equiv 1$. Then $u_2\equiv a_1^j$ and it follows from relations $(R_{12})$ and $(R_{14})$ that
$$
u_1cu_2c=u_1ca_1^jc=u_1ca_2a_1^{j-1}c=u_1ca_1^{j-1}c.
$$
Thus, $u_1cu_2c=u_1ca_1^{j-1}c$ is a consequence of $R$. 
Since $|a_1^{j-1}|=|u_2|-1<|u_2|$, we can use the induction hypothesis to conclude that 
there exist $w_1,w_2\in\{a_1,a_2,b_1,b_2\}^*$ such that $u_1ca_1^{j-1}c=w_1c^2w_2$ is a consequence of $R$, 
which implies that $u_1cu_2c=w_1c^2w_2$ is a consequence of $R$.

\smallskip

Case 7: Assume that $u_4\equiv a_1^{n-2}$. Then, we have
$$
u_1cu_2c=u_1cu_5u_4c=u_1cu_5a_1^{n-2}c=u_1cu_5a_2c,
$$
by applying the relation $(R_{13})$. So, $u_1cu_2c=u_1cu_5a_2c$ is a consequence of $R$. 
As $|u_5a_2|=|u_2|-(n-2)+1=|u_2|-(n-3)<|u_2|$ then, by the induction hypothesis, there exist $w_1,w_2\in\{a_1,a_2,b_1,b_2\}^*$ such that $u_1cu_5a_2c=w_1c^2w_2$ is a consequence of $R$. Thus, $u_1cu_2c=w_1c^2w_2$ is a consequence of $R$.

\smallskip

Case 8: Take $u_4\equiv a_2a_1^j$, for some $j\in\{1,\ldots,n-3\}$. Then, by the relations $(R_{14})$,  we have
$$
u_1cu_2c=u_1cu_5u_4c=u_1cu_5a_2a_1^jc=u_1cu_5a_1^jc.
$$
Thus, $u_1cu_2c=u_1cu_5a_1^jc$ is a consequence of $R$. 
Since $|u_5a_1^j|=|u_2|-1<|u_2|$, the induction hypothesis assures us that there exist $w_1,w_2\in\{a_1,a_2,b_1,b_2\}^*$ such that $u_1cu_5a_1^jc=w_1c^2w_2$ is a consequence of $R$, which implies that $u_1cu_2c=w_1c^2w_2$ is a consequence of $R$.

\smallskip

Case 9: Let $u_4\equiv b_1a_1$. Then, we have
$$
u_1cu_2c=u_1cu_5u_4c=u_1cu_5b_1a_1c=u_1cu_5cb_2,
$$
which follows from relation $(R_{15})$. Hence, $u_1cu_2c=u_1cu_5cb_2$ is a consequence of $R$. 
Since $|u_5|=|u_2|-2<|u_2|$, by the induction hypothesis, there exist $w_1,w'_2\in\{a_1,a_2,b_1,b_2\}^*$ such that $u_1cu_5c=w_1c^2w'_2$ is a consequence of $R$ and so $u_1cu_2c=w_1c^2w_2$ is a consequence of $R$, where $w_2\equiv w'_2b_2$.

\smallskip

Case 10: Finally, assume that $u_4\equiv b_1a_1^j$, for some $j\in\{2,\ldots,n-3\}$. It follows from relations $(R_{16})$ that
$$
u_1cu_2c=u_1cu_5u_4c=u_1cu_5b_1a_1^jc=u_1cu_5a_1^jc,
$$
whence $u_1cu_2c=u_1cu_5a_1^jc$ is a consequence of $R$. 
As $|u_5a_1^j|=|u_2|-1<|u_2|$ then, by the induction hypothesis, there exist $w_1,w_2\in\{a_1,a_2,b_1,b_2\}^*$ 
such that $u_1cu_5a_1^jc=w_1c^2w_2$ is a consequence of $R$. 
Thus, $u_1cu_2c=w_1c^2w_2$ is a consequence of $R$, as required.
\end{proof}

\begin{lemma}\label{w=ww=w1cw2}
Let $w\in\{a_1,a_2,b_1,b_2,c\}^*\setminus\{a_1,a_2,b_1,b_2\}^*$.
\begin{enumerate}
\item If $|w|_c$ is even, then there exists $w'\in\{a_1,a_2,b_1,b_2\}^*$ such that $w=w'$ is a consequence of $R$;
\item If $|w|_c$ is odd, then there exist $w_1,w_2\in\{a_1,a_2,b_1,b_2\}^*$ such that $w=w_1cw_2$ is a consequence of $R$.
\end{enumerate}
\end{lemma}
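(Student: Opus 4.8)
The plan is to prove both parts simultaneously by induction on $|w|_c$, the number of occurrences of the letter $c$ in $w$. Since $w\notin\{a_1,a_2,b_1,b_2\}^*$, we have $|w|_c\geqslant 1$, so the base cases are $|w|_c=1$ (odd) and $|w|_c=2$ (even). For the base case $|w|_c=1$, write $w\equiv u_1cu_2$ with $u_1,u_2\in\{a_1,a_2,b_1,b_2\}^*$, which is immediately of the required form $w_1cw_2$. For the base case $|w|_c=2$, write $w\equiv u_1cu_3cu_2$ with $u_1,u_2\in\{a_1,a_2,b_1,b_2\}^*$ and $u_3\in\{a_1,a_2,b_1,b_2\}^*$; by Lemma \ref{cwc=w1c2w2} applied to $cu_3c$, there exist $v_1,v_2\in\{a_1,a_2,b_1,b_2\}^*$ with $cu_3c=v_1c^2v_2$ a consequence of $R$, hence $w=u_1v_1c^2v_2u_2$ is a consequence of $R$; then relation $(R_{11})$, $c^3=c$, does not directly help, but we can use $c^2\cdot c^2 = c^4$; the point is to reduce $c^2$ further, which requires an auxiliary observation (see below).

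The key auxiliary observation needed is that $c^2$ behaves well: from Lemma \ref{rconseq}(2) we know $c^2=(b_1a_1)^{n-3}b_1a_1^3a_2$ is a consequence of $R$, so $c^2$ is itself equal to a word in $\{a_1,a_2,b_1,b_2\}^*$ (in fact in $\{a_1,a_2,b_1\}^*$). This is exactly what drives part 1: whenever we can write $w$ (a consequence of $R$) in a form where all occurrences of $c$ are grouped into blocks $c^2$, we replace each $c^2$ by $(b_1a_1)^{n-3}b_1a_1^3a_2$ and land in $\{a_1,a_2,b_1,b_2\}^*$. So the real engine is: given $w$ with $|w|_c=k\geqslant 2$, isolate the leftmost two occurrences of $c$, say $w\equiv u_1cu_3cw'$ with $u_1,u_3\in\{a_1,a_2,b_1,b_2\}^*$ and $w'$ containing the remaining $k-2$ occurrences of $c$; apply Lemma \ref{cwc=w1c2w2} to $cu_3c$ to rewrite it as $v_1c^2v_2$, then replace $c^2$ using Lemma \ref{rconseq}(2), obtaining $w=u_1v_1(b_1a_1)^{n-3}b_1a_1^3a_2v_2w'$, a consequence of $R$; this word has exactly $k-2$ occurrences of $c$ (all inside $w'$). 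If $k-2=0$ we are done (part 1, since $k$ even); if $k-2=1$ the resulting word has a single $c$ and is of the form $w_1cw_2$ (part 2, since $k$ odd); otherwise $k-2\geqslant 2$ and we invoke the induction hypothesis on this new word, whose $c$-count $k-2$ has the same parity as $k$, and whose form ($w_1c w_2$ if odd, $w'$ if even) is exactly what we need.

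The main obstacle is bookkeeping: one must be careful that after applying Lemma \ref{cwc=w1c2w2} the words $v_1,v_2$ lie in $\{a_1,a_2,b_1,b_2\}^*$ (they do, by the statement of that lemma) and that the splice $u_1 c u_3 c w'$ genuinely isolates the first two $c$'s with everything before the second $c$ lying in $\{a_1,a_2,b_1,b_2\}^*$ — this is fine because we choose $u_1$ to be the prefix of $w$ up to but not including the first $c$, and $u_3$ to be the block between the first and second $c$, which is again $c$-free by minimality. A secondary subtlety is the induction setup: since each step decreases $|w|_c$ by $2$, we should phrase the induction as strong induction on $|w|_c$ (covering both parities at once), with base cases $|w|_c\in\{1,2\}$ handled as above; for the case $|w|_c=2$ one simply applies the reduction once and lands in $\{a_1,a_2,b_1,b_2\}^*$ via Lemma \ref{rconseq}(2). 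No genuinely hard computation is involved; the content is entirely in correctly chaining Lemma \ref{cwc=w1c2w2} and Lemma \ref{rconseq}(2).
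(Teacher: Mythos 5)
Your proof is correct and follows essentially the same route as the paper's: an induction on $|w|_c$ that uses Lemma \ref{cwc=w1c2w2} to funnel two occurrences of $c$ into a block $c^2$ and then Lemma \ref{rconseq}(2) to eliminate that block. The only real difference is bookkeeping --- you collapse the two leftmost occurrences of $c$ and decrement $|w|_c$ by two at each step, which lets you bypass the paper's separate preprocessing of powers of $c$ via relation $(R_{11})$ before running the induction.
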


\begin{proof}
First, we prove the lemma, by induction on $m=|w|_c$, 
for words  $w\in\{a_1,a_2,b_1,b_2,c\}^*\setminus\{a_1,a_2,b_1,b_2\}^*$ of the form 
$$
w\equiv v_0cv_1cv_2c\cdots cv_{m-1}cv_m,
$$ 
for some $m\in\mathbb{N}$, $v_0,v_m\in\{a_1,a_2,b_1,b_2\}^*$ and $v_1,v_2,\ldots,v_{m-1}\in\{a_1,a_2,b_1,b_2\}^+$.

If $m=1$ then $w\equiv v_0cv_1$ and the result follows trivially. 

Thus, let $m\geqslant 2$ and assume that the result is valid for $m-1$.

Take $v\equiv v_1cv_2c\cdots cv_{m-1}cv_m$. Then $w\equiv v_0cv$.

Admit that $|w|_c$ is even.  Then $|v|_c$ is odd. Hence, by the induction hypothesis, there exist $w_1,w_2\in\{a_1,a_2,b_1,b_2\}^*$ such that $v=w_1cw_2$ is a consequence of $R$. Therefore, $w=v_0cw_1cw_2$ is a consequence of $R$. Then, 
by Lemma \ref{cwc=w1c2w2}, we may consider $w'_1,w'_2\in\{a_1,a_2,b_1,b_1\}^*$ such that $cw_1c=w'_1c^2w'_2$ is a consequence of $R$ and so $w=v_0w'_1c^2w'_2w_2$ is a consequence of $R$. 
Since $c^2=(b_1a_1)^{n-3}b_1a_1^3a_2$ is a consequence of $R$, by Lemma \ref{rconseq}, 
being $w'\equiv v_0w'_1 (b_1a_1)^{n-3}b_1a_1^3a_2  w'_2w_2 \in\{a_1,a_2,b_1,b_2\}^*$, 
we obtain that $w=w'$ is a consequence of $R$.

Next, suppose that $|w|_c$ is odd.  Then $|v|_c$ is even and so, by the induction hypothesis, 
there exists $w'\in\{a_1,a_2,b_1,b_2\}^*$ such that $v=w'$ is a consequence of $R$. 
Hence, $w=v_0cw'$ is a consequence of $R$.

\smallskip 

Now, let $w\in\{a_1,a_2,b_1,b_2,c\}^*\setminus\{a_1,a_2,b_1,b_2\}^*$ be any word. 
Then, by taking in account the relation $(R_{11})$, it is clear that 
$$
w= u_0c^{i_1}u_1c^{i_2}u_2c^{i_3}\cdots c^{i_{k-1}}u_{k-1}c^{i_k}u_k
$$ 
is a consequence of $R$, for some $k\in\mathbb{N}$, $i_1,\ldots,i_k\in\{1,2\}$, $u_1,u_2,\ldots,u_{k-1}\in\{a_1,a_2,b_1,b_2\}^+$ and $u_0,u_k\in\{a_1,a_2,b_1,b_2\}^*$.
Notice that $|w|_c$ and $|u_0c^{i_1}u_1c^{i_2}u_2c^{i_3}\cdots c^{i_{k-1}}u_{k-1}c^{i_k}u_k|_c$ have the same parity.

Next, by replacing in $u_0c^{i_1}u_1c^{i_2}u_2c^{i_3}\cdots c^{i_{k-1}}u_{k-1}c^{i_k}u_k$ each $c^2$ 
by $(b_1a_1)^{n-3}b_1a_1^3a_2$, we obtain a word $w'$ such that, 
by Lemma \ref{rconseq}, $u_0c^{i_1}u_1c^{i_2}u_2c^{i_3}\cdots c^{i_{k-1}}u_{k-1}c^{i_k}u_k=w'$ is a consequence of $R$ and 
 the parity of $|u_0c^{i_1}u_1c^{i_2}u_2c^{i_3}\cdots c^{i_{k-1}}u_{k-1}c^{i_k}u_k|_c$ and $|w'|_c$ are the same. 
It follows that $w=w'$ is a consequence of $R$ and $|w|_c$ and $|w'|_c$ have the same parity. 

If $i_1=i_2=\cdots=i_k=2$ (and so $|w|_c$ is even), then $w'\in\{a_1,a_2,b_1,b_2\}^*$, which ends the proof. 

On the other hand, if there exists $j\in\{1,\ldots,k\}$ such that $i_j=1$, then 
$w'$ is a word of the form $v_0cv_1cv_2c\cdots cv_{m-1}cv_m$, for some $m\in\{1,\ldots,k\}$, $v_0,v_m\in\{a_1,a_2,b_1,b_2\}^*$ and $v_1,v_2,\ldots,v_{m-1}\in\{a_1,a_2,b_1,b_2\}^+$. 
Therefore, the result follows by the first part of the proof, as required. 
\end{proof} 

\begin{lemma} \label{w=ascw2}
Let $w\in\{a_1,a_2,b_1,b_2,c\}^*\setminus\{a_1,a_2,b_1,b_2\}^*$ such that $|w|_c$ is odd. Then there exist $w_1\in\{a_1,a_2\}^*$ and $w_2\in\{a_1,a_2,b_1,b_2\}^*$ such that $w=w_1cw_2$ is a consequence of $R$.
\end{lemma}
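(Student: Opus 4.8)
The plan is to peel the statement down with Lemma~\ref{w=ww=w1cw2}(2) and then to force all the letters $b_1$ and $b_2$ that occur to the left of the single surviving $c$ across it. Since $|w|_c$ is odd, Lemma~\ref{w=ww=w1cw2}(2) supplies $u,v\in\{a_1,a_2,b_1,b_2\}^*$ with $w=ucv$ a consequence of $R$, so it suffices to prove: \emph{for every $u\in\{a_1,a_2,b_1,b_2\}^*$ there exist $p\in\{a_1,a_2\}^*$ and $v'\in\{a_1,a_2,b_1,b_2\}^*$ such that $uc=pcv'$ is a consequence of $R$} (granting this, $w=pc(v'v)$ is a consequence of $R$, and $w_1\equiv p$, $w_2\equiv v'v$ do the job).

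I would prove this in three steps. \emph{Step 1 (removing $b_2$).} By $(R_7)$ the letter $b_2$ commutes with $a_1$, $a_2$ and $b_1$, so $u$ equals, modulo $R$, a word $u_0b_2^k$ with $u_0\in\{a_1,a_2,b_1\}^*$ and $k=|u|_{b_2}$; by $(R_6)$ the tail collapses to at most one $b_2$, and Lemma~\ref{rconseq}(4) rewrites a trailing $b_2c$ as $c$ followed by a word of $\{a_1,a_2,b_1\}^*$. Hence one may assume $u\in\{a_1,a_2,b_1\}^*$. \emph{Step 2 (removing $b_1$).} Induct on $|u|_{b_1}$; if $|u|_{b_1}=0$ take $p\equiv u$, $v'\equiv1$. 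Otherwise write $u\equiv u'b_1s$ with $s\in\{a_1,a_2\}^*$ the suffix after the last $b_1$; it then suffices to prove the sub-claim \emph{for every $s\in\{a_1,a_2\}^*$ there exist $t\in\{a_1,a_2\}^*$ and $z\in\{a_1,a_2,b_1,b_2\}^*$ with $b_1sc=tcz$ a consequence of $R$}, because then $uc=u'tcz$ modulo $R$ and $|u't|_{b_1}=|u|_{b_1}-1$, so the induction hypothesis applies to $u'tc$.

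\emph{Step 3 (the sub-claim).} Here I would induct on $|s|$, first using $(R_1)$ and $(R_2)$ to delete any factor $a_2^2$ or $a_1^{n-1}$ (which shortens $s$), and then do a case analysis on the right-hand end of $s$: $s\equiv1$ is handled by Lemma~\ref{rconseq}(3); a single block $s\equiv a_1^r$ ($1\leqslant r\leqslant n-2$) by $(R_{15})$ if $r=1$, by $(R_{16})$ if $2\leqslant r\leqslant n-3$, and by $(R_{13})$ together with $(R_7)$ and Lemma~\ref{rconseq}(3) if $r=n-2$; if $s$ ends with $a_2$, then either $s\equiv a_2$ (use $(R_7)$ and Lemma~\ref{rconseq}(3)) or $s$ ends with $a_1^ra_2$ for some $r\geqslant1$ and $(R_{13})$ turns the trailing $a_2c$ into $a_1^{n-2}c$, which merges with $a_1^r$ and collapses via $(R_2)$, shortening $s$; if the terminal $a_1$-block $a_1^r$ is immediately preceded by $a_2$, delete that $a_2$ (or the whole $a_2a_1^{n-2}$) using $(R_{14})$ when $1\leqslant r\leqslant n-3$ and $(R_{13})$ with $(R_1)$ when $r=n-2$, again shortening $s$. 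Every branch either stops or recurses on a strictly shorter word of $\{a_1,a_2\}^*$, so the induction closes.

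The step I expect to be the main obstacle is Step 3: by $(R_7)$ the letter $b_1$ commutes with $a_2$ but not with $a_1$, while the relations that actually absorb a $b_1$, namely $(R_{15})$ and $(R_{16})$, demand an $a_1$-power standing immediately before $c$, so a $b_1$ buried behind a long alternation of $a_1$-blocks and $a_2$'s cannot be moved out in one stroke. The trick that unlocks it is to work exclusively at the end of $s$ next to $c$, using the consequence $a_2c=a_1^{n-2}c$ of $(R_{13})$: a trailing $a_2$ can be exchanged for $a_1$'s, which fuse with the adjacent $a_1$-block and then disappear modulo $a_1^{n-1}=1$, so each rewriting strictly decreases $|s|$ without ever creating a new $b_1$ left of $c$.
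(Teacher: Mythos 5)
Your proof is correct, and it rests on the same foundations as the paper's: both first invoke Lemma~\ref{w=ww=w1cw2} to reduce to a word of the form $ucv$ with $u,v\in\{a_1,a_2,b_1,b_2\}^*$, and both then rewrite the prefix $u$ through the single $c$ using $(R_{13})$--$(R_{16})$, the commutation relations $(R_7)$ and the derived relations of Lemma~\ref{rconseq}. Where you differ is in the organization of the induction. The paper runs one induction on $|u|$ with a ten-way case analysis on the suffix of $u$ adjacent to $c$, treating $b_1$, $b_2$, $a_1$-powers and $a_2$ all at once; you instead eliminate every occurrence of $b_2$ in a single preliminary step (pushing them to the right end by $(R_7)$, collapsing by $(R_6)$, and flipping the survivor across $c$ by Lemma~\ref{rconseq}(4)), and then run a nested induction, outer on $|u|_{b_1}$ and inner on the length of the $\{a_1,a_2\}^*$-suffix following the last $b_1$. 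Your Step~3 cases (absorbing $b_1$ via $(R_{15})$/$(R_{16})$ or Lemma~\ref{rconseq}(3), trading a trailing $a_2$ for $a_1^{n-2}$ via $(R_{13})$ so that it fuses with the adjacent $a_1$-block and shrinks under $(R_2)$, deleting a penultimate $a_2$ via $(R_{14})$) reproduce exactly the content of the paper's Cases 1, 3, 5--8 and 10, while your upfront $b_2$-normalization replaces its Cases 2, 4 and 9. The net effect is a somewhat cleaner termination argument with fewer live cases at each stage, at the modest cost of having to check that the $b_2$-elimination and the $b_1$-sub-claim compose, which you do correctly.
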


\begin{proof}
Let us consider $u_1,u_2\in\{a_1,a_2,b_1,b_2\}^*$ such that $w=u_1cu_2$ is a consequence of $R$, which are guaranteed by Lemma \ref{w=ww=w1cw2}. 
Observe that, if $u_1\in\{a_1,a_2\}^*$, then there is nothing left to prove. 
Thus, from now on, we assume that $u_1\in\{a_1,a_2,b_1,b_2\}^*\setminus\{a_1,a_2\}^+$.

We will prove by induction on the length of $u_1$ that there exist 
$w_1\in\{a_1,a_2\}^*$ and $w_2\in\{a_1,a_2,b_1,b_2\}^*$ such that $u_1cu_2=w_1cw_2$ is a consequence of $R$, 
thus completing the proof. 

If $|u_1|=0$ then $u_1\equiv 1 \in \{a_1,a_2\}^*$, and the result follows. 

Let $k\geqslant 1$ and assume that, for all $u,v\in\{a_1,a_2,b_1,b_2\}^*$ such that $|u|<k$, there exist $w_1\in\{a_1,a_2\}^*$ 
and $w_2\in\{a_1,a_2,b_1,b_2\}^*$ such that $ucv=w_1cw_2$ is a consequence of $R$.

Suppose $|u_1|=k$. It follows from relations $(R_1)$, $(R_2)$ and $(R_6)$ that $u_1=u_3$ is a consequence of $R$, for some subword $u_3\in\{a_1,a_2,b_1,b_2\}^*$ of $u_1$ such that none of the words of 
$\{a_1^i \mid i\geqslant n-1\}\cup\{a_2^i  \mid  i\geqslant 2\}\cup\{b_1^i  \mid  i\geqslant 2\}\cup\{b_2^i  \mid  i\geqslant 2\}$ is a factor of $u_3$. 
Notice that $|u_3|\leqslant |u_1|$.

If $|u_3|<|u_1|$ then, by the induction hypothesis, there exist $w_1\in\{a_1,a_2\}^*$ 
and $w_2\in\{a_1,a_2,b_1,b_2\}^*$ 
such that $u_3cu_2=w_1cw_2$ is a consequence of $R$. Since $u_1=u_3$ is a consequence of $R$ then $u_1cu_2=u_3cu_2$ is a consequence of $R$ and so $u_1cu_2=w_1cw_2$ is a consequence of $R$. 

Now, assume that $|u_3|=|u_1|$. Thus, $u_1\equiv u_3$ and, consequently, none of the words of 
$\{a_1^i \mid i\geqslant n-1\}\cup\{a_2^i  \mid  i\geqslant 2\}\cup\{b_1^i  \mid  i\geqslant 2\}\cup\{b_2^i  \mid  i\geqslant 2\}$  is a factor of $u_1$.

Let $u_4\in\{a_1,a_2,b_1,b_2\}^+$ be a suffix of $u_1$ and let $u_5\in\{a_1,a_2,b_1,b_2\}^*$ such that $u_1=u_5u_4$. 
Then, we can choose $u_4$ and $u_5$  satisfying one of the following cases, which we study separately, concluding the proof. 

\smallskip 

Case 1: Assume that $u_4 \equiv b_1$. By Lemma \ref{rconseq}, the relation $b_1c=c$ is a consequence of $R$, which implies that 
$$
u_1cu_2=u_5u_4cu_2=u_5b_1cu_2=u_5cu_2, 
$$
whence $u_1cu_2=u_5cu_2$ is a consequence of $R$. 
Since $|u_5|<|u_1|$ then, by the induction hypothesis, there exist $w_1\in\{a_1,a_2\}^*$  
and $w_2\in\{a_1,a_2,b_1,b_2\}^*$ 
such that $u_5cu_2=w_1cw_2$ and so 
$u_1cu_2=w_1cw_2$ is a consequence of $R$.

\smallskip

Case 2: Let $u_4 \equiv b_2$. Again, by Lemma \ref{rconseq}, 
the relation $b_2c=ca_2(b_1a_1)^{n-3}b_1a_1^3a_2$ is a consequence of $R$. This implies that
$$
u_1cu_2=u_5u_4cu_2=u_5b_2cu_2=u_5ca_2(b_1a_1)^{n-3}b_1a_1^3a_2u_2. 
$$
Thus, $u_1cu_2=u_5ca_2(b_1a_1)^{n-3}b_1a_1^3a_2u_2$ is a consequence of $R$. 
Since $|u_5|<|u_1|$ then, by the induction hypothesis, there exist $w_1\in\{a_1,a_2\}^*$ 
and $w_2\in\{a_1,a_2,b_1,b_2\}^*$ such that 
$u_5ca_2(b_1a_1)^{n-3}b_1a_1^3a_2u_2=w_1cw_2$, whence $u_1cu_2=w_1cw_2$ is a consequence of $R$.

\smallskip

Case 3: Suppose that $u_4 \equiv a_1^{n-2}$. By using the relation $(R_{13})$, we obtain 
$$
u_1cu_2=u_5u_4cu_2=u_5a_1^{n-2}cu_2=u_5a_2cu_2
$$
and so $u_1cu_2=u_5a_2cu_2$ is a consequence of $R$. 
As $|u_5a_2|<|u_1|$, by the induction hypothesis, there exist $w_1\in\{a_1,a_2\}^*$ 
and $w_2\in\{a_1,a_2,b_1,b_2\}^*$ such that 
$u_5a_2cu_2=w_1cw_2$ is a consequence of $R$ and thus we conclude that $u_1cu_2=w_1cw_2$ is a consequence of $R$.

\smallskip

Case 4: Take $u_4 \equiv b_2a_1^j$, for some $j\in\{1,\ldots,n-3\}$. By considering the relations $(R_7)$ and the fact that, 
by Lemma \ref{rconseq}, the relation 
$b_2c=ca_2(b_1a_1)^{n-3}b_1a_1^3a_2$ is a consequence of $R$, we obtain 
$$
u_1cu_2=u_5u_4cu_2=u_5b_2a_1^jcu_2=u_5a_1^jb_2cu_2=u_5a_1^jca_2(b_1a_1)^{n-3}b_1a_1^3a_2u_2, 
$$
whence $u_1cu_2=u_5a_1^jca_2(b_1a_1)^{n-3}b_1a_1^3a_2u_2$ is a consequence of $R$. 
Since $|u_5a_1^j|<|u_1|$ then, by the induction hypothesis, there exist $w_1\in\{a_1,a_2\}^*$ and $w_2\in\{a_1,a_2,b_1,b_2\}^*$ such that $u_5a_1^jca_2(b_1a_1)^{n-3}b_1a_1^3a_2u_2=w_1cw_2$ is a consequence of $R$. 
Therefore, $u_1cu_2=w_1cw_2$ is a consequence of $R$.

\smallskip 

Case 5: Assume that $u_4 \equiv b_1a_1$. Then, by applying relation $(R_{15})$, we have 
$$
u_1cu_2=u_5u_4cu_2=u_5b_1a_1cu_2=u_5cb_2u_2 
$$
and so $u_1cu_2=u_5cb_2u_2$ is a consequence of $R$. 
Since $|u_5|<|u_1|$ then, by the induction hypothesis, there exist $w_1\in\{a_1,a_2\}^*$ and $w_2\in\{a_1,a_2,b_1,b_2\}^*$ such that $u_5cb_2u_2=w_1cw_2$ is a consequence of $R$. Thus, $u_1cu_2=w_1cw_2$ is a consequence of $R$.

\smallskip

Case 6: Let us consider that $u_4 \equiv b_1a_1^j$, for some $j\in\{2,\ldots,n-3\}$. 
By the relations $(R_{16})$, we get 
$$
u_1cu_2=u_5u_4cu_2=u_5b_1a_1^jcu_2=u_5a_1^jcu_2,
$$
whence $u_1cu_2=u_5a_1^jcu_2$ is a consequence of $R$. 
As $|u_5a_1^j|<|u_1|$, we can use the induction hypothesis, which guarantees that there exist 
$w_1\in\{a_1,a_2\}^*$ and $w_2\in\{a_1,a_2,b_1,b_2\}^*$ such that $u_5a_1^jcu_2=w_1cw_2$ is a consequence of $R$. 
It follows that $u_1cu_2=w_1cw_2$ is a consequence of $R$.

\smallskip

Case 7: Suppose that $u_4 \equiv a_2a_1^j$, for some $j\in\{1,\ldots,n-3\}$. 
Then, by the relations $(R_{14})$, we obtain
$$
u_1cu_2=u_5u_4cu_2=u_5a_2a_1^jcu_2=u_5a_1^jcu_2,
$$
which implies that $u_1cu_2=u_5a_1^jcu_2$ is a consequence of $R$. 
Clearly, $|u_5a_1^j|<|u_1|$ and so, by the induction hypothesis, there exist $w_1\in\{a_1,a_2\}^*$ and $w_2\in\{a_1,a_2,b_1,b_2\}^*$ such that $u_5a_1^jcu_2=w_1cw_2$ is a consequence of $R$. Hence, $u_1cu_2=w_1cw_2$ is a consequence of $R$.

\smallskip

Case 8: Let $u_4 \equiv b_1a_2$. We have that $b_1c=c$ is a consequence of $R$, 
by Lemma \ref{rconseq}. By considering also the relations $(R_7)$, we have 
$$
u_1cu_2=u_5u_4cu_2=u_5b_1a_2cu_2=u_5a_2b_1cu_2=u_5a_2cu_2
$$
and so $u_1cu_2=u_5a_2cu_2$ is a consequence of $R$. 
Since $|u_5a_2|<|u_1|$ then, by the induction hypothesis, 
there exist $w_1\in\{a_1,a_2\}^*$ and $w_2\in\{a_1,a_2,b_1,b_2\}^*$ such that $u_5a_2cu_2=w_1cw_2$ is a consequence of $R$, 
whence $u_1cu_2=w_1cw_2$ is a consequence of $R$.

\smallskip

Case 9: Take $u_4 \equiv b_2a_2$. By Lemma \ref{rconseq}, we have that the relation 
$b_2c=ca_2(b_1a_1)^{n-3}b_1a_1^3a_2$ is a consequence of $R$. 
By applying also relations $(R_7)$, we get
$$
u_1cu_2=u_5u_4cu_2=u_5b_2a_2cu_2=u_5a_2b_2cu_2=u_5a_2ca_2(b_1a_1)^{n-3}b_1a_1^3a_2u_2.
$$
So $u_1cu_2=u_5a_2ca_2(b_1a_1)^{n-3}b_1a_1^3a_2u_2$ is a consequence of $R$.
It is clear that $|u_5a_2|<|u_1|$ and thus, by the induction hypothesis, 
there exist $w_1\in\{a_1,a_2\}^*$ and $w_2\in\{a_1,a_2,b_1,b_2\}^*$ such that 
$u_5a_2ca_2(b_1a_1)^{n-3}b_1a_1^3a_2u_2=w_1cw_2$ is a consequence of $R$. Therefore, 
$u_1cu_2=w_1cw_2$ is a consequence of $R$.

\smallskip

Case 10: Finally, assume that $u_4 \equiv a_1a_2$. Since $a_1a_2c=c$ is a consequence of $R$, 
by Lemma \ref{rconseq}, we have 
$$
u_1cu_2=u_5u_4cu_2=u_5a_1a_2cu_2=u_5cu_2,
$$
whence $u_1cu_2=u_5cu_2$ is a consequence of $R$. 
Since $|u_5|<|u_1|$, the induction hypothesis guarantees that there exist $w_1\in\{a_1,a_2\}^*$ and $w_2\in\{a_1,a_2,b_1,b_2\}^*$ such that $u_5cu_2=w_1cw_2$ is a consequence of $R$. Thus, $u_1cu_2=w_1cw_2$ is a consequence of $R$, as required. 
\end{proof}

\begin{lemma} \label{u3cvarphi}
Let $u_1,v_1\in\{a_1,a_2\}^*$ and $u_2,v_2\in\{a_1,a_2,b_1,b_2\}^*$ be such that $(u_1cu_2)\varphi=(v_1cv_2)\varphi$. Then 
$(cu_2)\varphi=(cv_2)\varphi$ and, there exist $u_3,v_3\in\{a_1,a_2,b_1\}^*$ such that $u_1cu_2=u_3cu_2$ and $v_1cv_2=v_3cv_2$ are consequences of $R$ and $(u_3c)\varphi=(v_3c)\varphi$.
\end{lemma}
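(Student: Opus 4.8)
The plan is to pin down the structure of the transformation $(u_1c)\varphi$ (and of $(v_1c)\varphi$) and then read off both assertions from a short analysis of domains. Since $u_1,v_1\in\{a_1,a_2\}^*$, the transformations $u_1\varphi$ and $v_1\varphi$ are units of $\DPS_n$, i.e. permutations of $\{0,1,\ldots,n-1\}$ fixing $0$; hence $(u_1c)\varphi=(u_1\varphi)\gamma=\begin{pmatrix}0&k_u\\1&0\end{pmatrix}$, where $k_u=1(u_1\varphi)^{-1}\in\{1,\ldots,n-1\}$, and likewise $(v_1c)\varphi=\begin{pmatrix}0&k_v\\1&0\end{pmatrix}$ with $k_v=1(v_1\varphi)^{-1}$. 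Put $\alpha=(u_1cu_2)\varphi=(v_1cv_2)\varphi$. From $\alpha=\begin{pmatrix}0&k_u\\1&0\end{pmatrix}(u_2\varphi)=\begin{pmatrix}0&k_v\\1&0\end{pmatrix}(v_2\varphi)$ one gets $\dom(\alpha)\subseteq\{0,k_u\}\cap\{0,k_v\}$, that $0\in\dom(\alpha)$ if and only if $1\in\dom(u_2\varphi)$ if and only if $1\in\dom(v_2\varphi)$, and that $k_u\in\dom(\alpha)\Leftrightarrow0\in\dom(u_2\varphi)$ while $k_v\in\dom(\alpha)\Leftrightarrow0\in\dom(v_2\varphi)$. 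As $\dom(\alpha)$ has at most one non-zero element, it follows that $0\in\dom(u_2\varphi)$ if and only if $0\in\dom(v_2\varphi)$, and that $k_u=k_v$ whenever this common condition holds. This dichotomy drives the rest of the argument.

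For the equality $(cu_2)\varphi=(cv_2)\varphi$, I would observe that, since $u_1\varphi$ and $v_1\varphi$ are units, $\gamma(u_2\varphi)=(u_1\varphi)^{-1}(u_1cu_2)\varphi=(u_1\varphi)^{-1}\alpha$ and, likewise, $\gamma(v_2\varphi)=(v_1\varphi)^{-1}\alpha$. Now $(u_1\varphi)^{-1}$ and $(v_1\varphi)^{-1}$ both fix $0$ and send $1$ to $k_u$ and $k_v$ respectively, while $\dom(\alpha)\subseteq\{0,k_u\}\cap\{0,k_v\}$; since, by the dichotomy, $k_u\in\dom(\alpha)$ if and only if $k_v\in\dom(\alpha)$ (and $k_u=k_v$ in that case), a direct comparison of domains and values yields $(u_1\varphi)^{-1}\alpha=(v_1\varphi)^{-1}\alpha$, that is, $(cu_2)\varphi=\gamma(u_2\varphi)=\gamma(v_2\varphi)=(cv_2)\varphi$.

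For the existence of $u_3$ and $v_3$ I would split according to the dichotomy. If $0\in\dom(u_2\varphi)=\dom(v_2\varphi)$, then $k_u=k_v$, so $(u_1c)\varphi=(v_1c)\varphi$, and it suffices to take $u_3\equiv u_1$ and $v_3\equiv v_1$: these lie in $\{a_1,a_2,b_1\}^*$, the relations $u_1cu_2=u_3cu_2$ and $v_1cv_2=v_3cv_2$ hold trivially, and $(u_3c)\varphi=(v_3c)\varphi$. If instead $0\notin\dom(u_2\varphi)=\dom(v_2\varphi)$, then by Lemma~\ref{0b2} we have $|u_2|_{b_2}\geqslant1$ and $|v_2|_{b_2}\geqslant1$, hence (by $(R_6)$ and $(R_7)$, exactly as in the proof of Lemma~\ref{0b2}) $u_2=b_2u_2$ and $v_2=b_2v_2$ are consequences of $R$; then, using $(R_{15})$ in the form $cb_2=b_1a_1c$, one obtains that $u_1cu_2=u_1cb_2u_2=(u_1b_1a_1)cu_2$ and, symmetrically, $v_1cv_2=(v_1b_1a_1)cv_2$ are consequences of $R$. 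So I would take $u_3\equiv u_1b_1a_1$ and $v_3\equiv v_1b_1a_1$, which belong to $\{a_1,a_2,b_1\}^*$; using that $\beta_1\alpha_1\gamma=\begin{pmatrix}0\\1\end{pmatrix}$ and that $u_1\varphi$ and $v_1\varphi$ fix $0$, we get $(u_3c)\varphi=(u_1\varphi)\beta_1\alpha_1\gamma=\begin{pmatrix}0\\1\end{pmatrix}=(v_3c)\varphi$, as required.

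All individual verifications here are routine computations with partial transformations of rank at most $2$. The one step that is not purely mechanical is the use of relation $(R_{15})$ to absorb the prefix $u_1$ (respectively $v_1$) into $u_3$ (respectively $v_3$) in the case $0\notin\dom(u_2\varphi)$: this is precisely what keeps $u_3$ and $v_3$ inside $\{a_1,a_2,b_1\}^*$. Accordingly, the main obstacle I anticipate is not conceptual but one of bookkeeping --- tracking domains and images carefully enough to establish the dichotomy $0\in\dom(u_2\varphi)\Leftrightarrow0\in\dom(v_2\varphi)$ together with $k_u=k_v$ --- after which both conclusions follow with little further effort.
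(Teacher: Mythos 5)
Your proposal is correct and follows essentially the same route as the paper: identify $(u_1c)\varphi$ and $(v_1c)\varphi$ as rank-two maps $\begin{pmatrix}0&k\\1&0\end{pmatrix}$, deduce the domain dichotomy, and then either take $u_3\equiv u_1$, $v_3\equiv v_1$ (when the non-zero point survives, forcing $k_u=k_v$) or absorb a $b_2$ from $u_2,v_2$ via $(R_6)$, $(R_7)$ and $(R_{15})$ to take $u_3\equiv u_1b_1a_1$, $v_3\equiv v_1b_1a_1$. The only (cosmetic) difference is that you derive $(cu_2)\varphi=(cv_2)\varphi$ by cancelling the units $u_1\varphi$, $v_1\varphi$ and comparing $(u_1\varphi)^{-1}\alpha$ with $(v_1\varphi)^{-1}\alpha$, whereas the paper chases the elements $0$ and $1$ directly; both verifications are sound.
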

\begin{proof}
First, observe that 
$$
(u_1cu_2)\varphi=(u_1\varphi)(c\varphi)(u_2\varphi)=(u_1\varphi)\gamma(u_2\varphi)
\quad\text{and}\quad 
(v_1cv_2)\varphi=(v_1\varphi)(c\varphi)(v_2\varphi)=(v_1\varphi)\gamma(v_2\varphi)
$$
and so, as $|\dom(\gamma)|=2$, we have $|\dom((u_1cu_2)\varphi)|=|\dom((v_1cv_2)\varphi)|\leqslant 2$. 
Since $u_1,v_1\in\{a_1,a_2\}^*$,  it follows that  
$u_1\varphi,v_1\varphi\in\langle\alpha_1,\alpha_2\rangle=\{\alpha\in\DPS_n \mid  |\dom(\alpha)|=n\}$, whence 
$\dom(u_1\varphi)=\im(u_1\varphi)=\dom(v_1\varphi)=\im(v_1\varphi)=\{0,1,\ldots,n-1\}$ and $0(u_1\varphi)=0(v_1\varphi)=0$.
Moreover, 
$$
\dom((u_1cu_2)\varphi)\subseteq\dom((u_1c)\varphi)=\dom((u_1\varphi)\gamma)=\{0,i\}\quad\text{and}\quad  
(u_1\varphi)\gamma=\begin{pmatrix}0&i\\1&0\end{pmatrix},
$$ 
for some $i\in\{1,\ldots,n-1\}$, and 
$$
\dom((v_1cv_2)\varphi)\subseteq\dom((v_1c)\varphi)=\dom((v_1\varphi)\gamma)=\{0,j\}\quad\text{and}\quad  
(v_1\varphi)\gamma=\begin{pmatrix}0&j\\1&0\end{pmatrix},
$$ 
for some $j\in\{1,\ldots,n-1\}$. We also have  
$$
\dom((cu_2)\varphi)=\dom(\gamma(u_2\varphi))\subseteq \{0,1\}
\quad\text{and}\quad
\dom((cv_2)\varphi)=\dom(\gamma(v_2\varphi))\subseteq \{0,1\},
$$
$$
1\in\dom((cu_2)\varphi) \Longleftrightarrow i=j \text{ and } i\in \dom((u_1cu_2)\varphi)=\dom((v_1cv_2)\varphi) 
\Longleftrightarrow 1\in\dom((cv_2)\varphi)
$$
and 
$$
0\in\dom((cu_2)\varphi) \Longleftrightarrow 0 \in \dom((u_1cu_2)\varphi)=\dom((v_1cv_2)\varphi) 
\Longleftrightarrow 0\in\dom((cv_2)\varphi),
$$
whence $\dom((cu_2)\varphi)=\dom((cv_2)\varphi)$. 
Furthermore, 
if $0\in\dom((cu_2)\varphi)$ then 
$$
0(cu_2)\varphi=(0(u_1\varphi))(cu_2)\varphi=0(u_1cu_2)\varphi = 
0(v_1cv_2)\varphi =(0(v_1\varphi))(cv_2)\varphi=0(cv_2)\varphi
$$
and if $1\in\dom((cu_2)\varphi)$ then 
\begin{multline}\nonumber
1(cu_2)\varphi=1\gamma(u_2\varphi)= 0(u_2\varphi)=(i(u_1\varphi)\gamma)(u_2\varphi)= 
i(u_1cu_2)\varphi = \\
= i(v_1cv_2)\varphi = 
(i(v_1\varphi)\gamma)(v_2\varphi)=0(v_2\varphi)=1\gamma(v_2\varphi)=1(cv_2)\varphi.  
\end{multline}
Thus $(cu_2)\varphi=(cv_2)\varphi$. 

\smallskip 

Next, we divide the proof in two cases.  

\smallskip 

Case 1: Suppose that $|\dom((u_1cu_2)\varphi)|=2$ 
or $\dom((u_1cu_2)\varphi)=\{i\}$.  
Then, 
$$
\{0,i\}=\dom((u_1cu_2)\varphi)=\dom((v_1cv_2)\varphi)=\{0,j\} 
$$
or  
$$
\{i\}=\dom((u_1cu_2)\varphi)=\dom((v_1cv_2)\varphi)\subseteq \{0,j\}
$$ 
and so, in both scenarios, we have 
$j=i$ and so $(u_1c)\varphi=(u_1\varphi)\gamma=(v_1\varphi)\gamma=(v_1c)\varphi$. 
Hence, being $u_3\equiv u_1$ and $v_3\equiv v_1$, we have $u_1cu_2\equiv u_3cu_2$, 
$v_1cv_2\equiv v_3cv_2$ and $(u_3c)\varphi=(v_3c)\varphi$, which concludes the proof in this case. 

\smallskip 

Case 2: Now, admit that $\dom((u_1cu_2)\varphi)=\{0\}$ 
or $\dom((u_1cu_2)\varphi)=\emptyset$. 
In both scenarios, it follows that $i\not\in \dom((u_1cu_2)\varphi)=\dom((v_1cv_2)\varphi)$. 
Then, by the above observations, we conclude that $1\not\in \dom((cu_2)\varphi)=\dom((cv_2)\varphi)$ 
and so $0\not\in\dom(u_2\varphi)\cup\dom(v_2\varphi)$. 
Hence, $u_2\varphi, v_2\varphi \not\in\langle\alpha_1,\alpha_2,\beta_1\rangle$ and so 
$|u_2|_{b_2}\geqslant 1$ and $|v_2|_{b_2}\geqslant 1$. 
Therefore, taking in account relations $(R_6)$ and $(R_7)$, we deduce that 
$u_2=b_2u_2$ and $v_2=b_2v_2$ are consequences of $R$ and so, by applying also the relation $(R_{15})$, 
we conclude that $u_1cu_2=u_1b_1a_1cu_2$ and $v_1cv_2=v_1b_1a_1cv_2$ are consequences of $R$.

Now, since
$$
(b_1a_1c)\varphi=(b_1\varphi)(a_1\varphi)(c\varphi)=\beta_1\alpha_1\gamma=
\begin{pmatrix}0\\1\end{pmatrix}
$$
and $0(u_1\varphi)=0(v_1\varphi)=0$, then
$$
(u_1b_1a_1c)\varphi=(u_1\varphi)(b_1a_1c)\varphi=\begin{pmatrix}0\\1\end{pmatrix}=(v_1\varphi)(b_1a_1c)\varphi=(v_1b_1a_1c)\varphi 
$$
and so, by considering $u_3\equiv u_1b_1a_1\in\{a_1,a_2,b_1\}^*$ and $v_3\equiv v_1b_1a_1\in\{a_1,a_2,b_1\}^*$, 
we obtain that $u_1cu_2=u_3cu_2$ and $v_1cv_2=v_3cv_2$ are consequences of $R$ and $(u_3c)\varphi=(v_3c)\varphi$, as required. 
\end{proof}

\begin{lemma}\label{samepar}
Let $w_1\in\{a_1,a_2,b_1,b_2,c\}^*\setminus\{a_1,a_2,b_1,b_2\}^*$ and $w_2\in\{a_1,a_2,b_1,b_2,c\}^*$ be such that $|w_1|_c$ and $|w_2|_c$ have the same parity. If $w_1\varphi=w_2\varphi$ then $w_1=w_2$ is a consequence of $R$.
\end{lemma}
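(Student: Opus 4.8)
The plan is to argue by cases on the parity of $|w_1|_c$, which by hypothesis coincides with that of $|w_2|_c$; note that since $w_1\notin\{a_1,a_2,b_1,b_2\}^*$ this parity is only meaningful because $w_1$ contains at least one $c$. Throughout I will freely use that $\rho_R$ is a congruence, so that ``is a consequence of $R$'' is preserved under left and right multiplication by any word.

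\emph{Even case.} Here $|w_2|_c$ is also even. I would apply Lemma \ref{w=ww=w1cw2}(1) to $w_1$ to obtain $w_1'\in\{a_1,a_2,b_1,b_2\}^*$ with $w_1=w_1'$ a consequence of $R$; if $w_2\in\{a_1,a_2,b_1,b_2\}^*$ I set $w_2'\equiv w_2$, and otherwise I apply Lemma \ref{w=ww=w1cw2}(1) again to get $w_2'\in\{a_1,a_2,b_1,b_2\}^*$ with $w_2=w_2'$ a consequence of $R$. Then $w_1'\varphi=w_1\varphi=w_2\varphi=w_2'\varphi$ with $w_1',w_2'\in\{a_1,a_2,b_1,b_2\}^*$, so Lemma \ref{a1a2b1b2} gives that $w_1'=w_2'$ is a consequence of $R$, whence so is $w_1=w_2$.

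\emph{Odd case.} Now $|w_2|_c$ is odd, so neither $w_1$ nor $w_2$ lies in $\{a_1,a_2,b_1,b_2\}^*$. By Lemma \ref{w=ascw2} there exist $u_1,v_1\in\{a_1,a_2\}^*$ and $u_2,v_2\in\{a_1,a_2,b_1,b_2\}^*$ such that $w_1=u_1cu_2$ and $w_2=v_1cv_2$ are consequences of $R$. Since $(u_1cu_2)\varphi=w_1\varphi=w_2\varphi=(v_1cv_2)\varphi$ and $u_1,v_1\in\{a_1,a_2\}^*$, Lemma \ref{u3cvarphi} applies and yields both $(cu_2)\varphi=(cv_2)\varphi$ and words $u_3,v_3\in\{a_1,a_2,b_1\}^*$ with $u_1cu_2=u_3cu_2$, $v_1cv_2=v_3cv_2$ consequences of $R$ and $(u_3c)\varphi=(v_3c)\varphi$. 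So it remains to show that $u_3cu_2=v_3cv_2$ is a consequence of $R$, and the plan is to prove separately that $u_3c=v_3c$ and $cu_2=cv_2$ are consequences of $R$, then combine via $w_1=u_3cu_2=v_3cu_2=v_3cv_2=w_2$.

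\emph{The key manoeuvre.} To get $u_3c=v_3c$: from $(u_3c)\varphi=(v_3c)\varphi$ I multiply on the right by $c\varphi$ to obtain $(u_3c^2)\varphi=(v_3c^2)\varphi$. Now $u_3c^2$ and $v_3c^2$ contain a $c$ (so lie outside $\{a_1,a_2,b_1,b_2\}^*$) and have $c$-length $2$, which is even, so Lemma \ref{w=ww=w1cw2}(1) gives $p,q\in\{a_1,a_2,b_1,b_2\}^*$ with $u_3c^2=p$, $v_3c^2=q$ consequences of $R$; as $p\varphi=q\varphi$, Lemma \ref{a1a2b1b2} gives $p=q$, hence $u_3c^2=v_3c^2$ is a consequence of $R$. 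Multiplying this relation on the right by $c$ and using $(R_{11})$ (namely $c^3=c$) yields $u_3c=u_3c^3=v_3c^3=v_3c$ as a consequence of $R$. The relation $cu_2=cv_2$ is obtained by the mirror-image argument, starting from $(c^2u_2)\varphi=(c^2v_2)\varphi$, reducing $c^2u_2$ and $c^2v_2$ via Lemma \ref{w=ww=w1cw2}(1), applying Lemma \ref{a1a2b1b2}, and finally multiplying on the left by $c$ and invoking $(R_{11})$ once more. The single genuinely load-bearing idea is this trick of post- or pre-multiplying a one-$c$ relation by $c$ so that two copies of $c$ become adjacent and collapse through $c^3=c$; it is what lets one pass from equality of $\varphi$-images to equality of words without having to develop explicit normal forms for words of the shape $(\text{$c$-free word})\,c$. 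Accordingly, I expect the only real care needed to be the bookkeeping of which sub-alphabet ($\{a_1,a_2\}^*$, $\{a_1,a_2,b_1\}^*$ or $\{a_1,a_2,b_1,b_2\}^*$) each intermediate word inhabits, so that Lemmas \ref{w=ww=w1cw2}, \ref{a1a2b1b2}, \ref{w=ascw2} and \ref{u3cvarphi} are each invoked with hypotheses genuinely met.
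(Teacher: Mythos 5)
Your proposal is correct and follows essentially the same route as the paper: reduce the even case to Lemma \ref{a1a2b1b2} via Lemma \ref{w=ww=w1cw2}, and in the odd case use Lemmas \ref{w=ascw2} and \ref{u3cvarphi} and then pad $u_3c$ and $cu_2$ with an extra $c$ to land in the even case, collapsing $c^3$ back to $c$ via $(R_{11})$. The only cosmetic difference is that the paper cites its own even-parity case where you re-run Lemma \ref{w=ww=w1cw2} plus Lemma \ref{a1a2b1b2} directly; the content is identical.
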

\begin{proof}
First, suppose that $w_2\in\{a_1,a_2,b_1,b_2\}^*$. Then $|w_2|_c$ is even and so $|w_1|_c$ is also even. Thus, 
by Lemma \ref{w=ww=w1cw2}, there exists $w\in\{a_1,a_2,b_1,b_2\}^*$ such that $w_1=w$ is a consequence of $R$. Hence $w_2\varphi=w_1\varphi=w\varphi$, which implies, by Lemma \ref{a1a2b1b2}, that $w_2=w$ is a consequence of $R$. 
Therefore $w_1=w_2$ is a consequence of $R$.

Now, admit that $w_2\in\{a_1,a_2,b_1,b_2,c\}^*\setminus\{a_1,a_2,b_1,b_2\}^*$.

If $|w_1|_c$ and $|w_2|_c$ are both even then, by Lemma \ref{w=ww=w1cw2}, there exist $u_1,u_2\in\{a_1,a_2,b_1,b_2\}^*$ such that $w_1=u_1$ and $w_2=u_2$ are consequences of $R$. This implies that $u_1\varphi=w_1\varphi=w_2\varphi=u_2\varphi$. 
Therefore, by Lemma \ref{a1a2b1b2}, $u_1=u_2$ is a consequence of $R$ and thus $w_1=w_2$ is a consequence of $R$.

If $|w_1|_c$ and $|w_2|_c$ are both odd then, by Lemma \ref{w=ascw2}, there exist $u_1,v_1\in\{a_1,a_2\}^*$ and $u_2,v_2\in\{a_1,a_2,b_1,b_2\}^*$ such that $w_1=u_1cu_2$ and $w_2=v_1cv_2$ are consequences of $R$. 
Thus, $(u_1cu_2)\varphi=w_1\varphi=w_2\varphi=(v_1cv_2)\varphi$ and so, 
by Lemma \ref{u3cvarphi}, $(cu_2)\varphi=(cv_2)\varphi$ and there exist $u_3,v_3\in\{a_1,a_2,b_1\}^*$ 
such that $u_1cu_2=u_3cu_2$ and $v_1cv_2=v_3cv_2$ are consequences of $R$ and $(u_3c)\varphi=(v_3c)\varphi$.
Hence, we have 
$$
(c^2u_2)\varphi=(c\varphi)((cu_2)\varphi)=(c\varphi)((cv_2)\varphi)=(c^2v_2)\varphi
\quad\text{and}\quad 
(u_3c^2)\varphi=((u_3c)\varphi)(c\varphi)=((v_3c)\varphi)(c\varphi)=(v_3c^2)\varphi.
$$
Now, since $|c^2u_2|_c=|c^2v_2|_c=2$ and $|u_3c^2|_c=|v_3c^2|=2$ then, by the first part of the proof, 
we conclude that $c^2u_2=c^2v_2$ and $u_3c^2=v_3c^2$ are consequences of $R$, 
whence $c^3u_2=c^3v_2$ and $u_3c^3=v_3c^3$ are consequences of $R$ and so, by relation $(R_{11})$, 
$cu_2=cv_2$ and $u_3c=v_3c$ are also consequences of $R$. Thus, we have
$$
w_1=u_1cu_2=u_3cu_2=u_3cv_2=v_3cv_2=v_1cv_2=w_2,
$$
which implies $w_1=w_2$ is a consequence of $R$, as required. 
\end{proof}

Finally, we present our last lemma. 

\begin{lemma}\label{difpar}
Let $w_1\in\{a_1,a_2,b_1,b_2,c\}^*\setminus\{a_1,a_2,b_1,b_2\}^*$ and $w_2\in\{a_1,a_2,b_1,b_2,c\}^*$ be such that $|w_1|_c$ and $|w_2|_c$ have different parity. If $w_1\varphi=w_2\varphi$ then $w_1=w_2$ is a consequence of $R$.
\end{lemma}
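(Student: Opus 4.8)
The plan is to prove that, when $|w_1|_c$ and $|w_2|_c$ have different parities while $w_1\varphi=w_2\varphi$, the common value is necessarily the empty partial transformation $\emptyset$, and then to show that \emph{every} word of $\{a_1,a_2,b_1,b_2,c\}^*$ mapped by $\varphi$ to $\emptyset$ is a consequence of $R$ of one of two fixed words, these two being themselves provably equal.

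First I would establish the following auxiliary fact: writing $e_1\equiv b_2cb_2$ and $e_0\equiv b_2(b_1a_1)^{n-2}b_1a_1^3a_2$, one has $e_1\varphi=\beta_2\gamma\beta_2=\emptyset$ and $e_0=e_1$ is a consequence of $R$. Indeed, by $(R_{19})$ we have $b_2cb_2=b_2cb_2c$; rewriting the factor $cb_2$ by means of $(R_{15})$ gives $b_2cb_2c=b_2b_1a_1c^2$; and replacing $c^2$ by $(b_1a_1)^{n-3}b_1a_1^3a_2$, which is a consequence of $R$ by Lemma \ref{rconseq}(2), produces the $c$-free word $e_0$. In particular $e_0\varphi=e_1\varphi=\emptyset$.

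Next, for the lemma itself, one of $w_1,w_2$ has odd $c$-count; call it $p$ and the other $q$. As $|p|_c$ is odd, $p\notin\{a_1,a_2,b_1,b_2\}^*$, so by Lemma \ref{w=ascw2} there are $u\in\{a_1,a_2\}^*$ and $v\in\{a_1,a_2,b_1,b_2\}^*$ such that $p=ucv$ is a consequence of $R$; and, since $|q|_c$ is even, there is $z\in\{a_1,a_2,b_1,b_2\}^*$ such that $q=z$ is a consequence of $R$ (trivially if $q$ is already $c$-free, otherwise by Lemma \ref{w=ww=w1cw2}). Now $u\varphi$ belongs to the group of units $\langle\alpha_1,\alpha_2\rangle$ and hence fixes $0$, so $(u\varphi)\gamma=\begin{pmatrix}0&i\\1&0\end{pmatrix}$ for some $i\in\{1,\ldots,n-1\}$ and $p\varphi=\begin{pmatrix}0&i\\1&0\end{pmatrix}(v\varphi)$. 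Distinguishing the four cases according to whether $0$ and $1$ lie in $\dom(v\varphi)$, and using that every element of $\langle\alpha_1,\alpha_2,\beta_1,\beta_2\rangle$ fixes $0$ when $0$ is in its domain, hence never sends a nonzero vertex to $0$, by (\ref{almostall}), one checks that $p\varphi$ is either $\emptyset$ or belongs to the third or fourth of the four subsets of $\DPS_n$ listed in (\ref{dpsn}). But $q\varphi=z\varphi\in\langle\alpha_1,\alpha_2,\beta_1,\beta_2\rangle$, which is the union of the first two of those subsets; the only element lying in both families is $\emptyset$. Therefore $w_1\varphi=w_2\varphi=\emptyset$.

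Finally I would conclude: any $w\in\{a_1,a_2,b_1,b_2,c\}^*$ with $w\varphi=\emptyset$ is a consequence of $R$ of $e_0$ when $|w|_c$ is even --- reducing $w$ to a $c$-free word via Lemma \ref{w=ww=w1cw2} if necessary and then applying Lemma \ref{a1a2b1b2} --- and of $e_1$ when $|w|_c$ is odd, since the word $ucv$ furnished by Lemma \ref{w=ascw2} has exactly one occurrence of $c$, so that Lemma \ref{samepar} applies to $ucv$ and $e_1$. As $e_0=e_1$ is a consequence of $R$, all words mapped to $\emptyset$ are mutually consequences of $R$; applying this to $w_1$ and $w_2$ finishes the proof. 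The main obstacle is the structural dichotomy of the third paragraph: one must verify that a word containing an odd number of $c$'s can only represent an element of the ``odd'' part of $\DPS_n$ (those with $0$ in the domain mapped off $0$, or $0$ in the image without $0$ in the domain) or $\emptyset$, which is precisely what forces the common value to be $\emptyset$; the chain establishing $e_0=e_1$ is short, and everything else reduces to bookkeeping with the earlier lemmas.
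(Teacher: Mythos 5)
Your proof is correct and follows essentially the same route as the paper: reduce $w_1$ and $w_2$ to the normal forms supplied by Lemmas \ref{w=ww=w1cw2} and \ref{w=ascw2}, show the common image must be $\emptyset$ (the paper does this by the same domain chase on $(u_1\varphi)\gamma(u_2\varphi)$, just phrased as a direct contradiction rather than via the four-subset decomposition of (\ref{dpsn})), and bridge the parity gap through $b_2cb_2$ and relation $(R_{19})$ using Lemma \ref{samepar}. Your only cosmetic deviation is replacing the paper's even-parity representative $b_2cb_2c$ by its explicit $c$-free reduction $e_0\equiv b_2(b_1a_1)^{n-2}b_1a_1^3a_2$, which lets you invoke Lemma \ref{a1a2b1b2} instead of a second application of Lemma \ref{samepar}.
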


\begin{proof}
Observe that, since $|w|_c=0$, so an even number, for all $w\in\{a_1,a_2,b_1,b_2\}^*$, we may 
suppose, without loss of generality, that $|w_1|_c$ is odd and $|w_2|_c$ is even.

If $w_2\in\{a_1,a_2,b_1,b_2\}^*$ then it is obvious that $w_2=v$ is a consequence of $R$, where $v\equiv w_2$.

If $w_2\in\{a_1,a_2,b_1,b_2,c\}^*\setminus\{a_1,a_2,b_1,b_2\}^*$ then, as $|w_2|_c$ is even, 
Lemma \ref{w=ww=w1cw2} guarantees us the existence of $v\in\{a_1,a_2,b_1,b_2\}^*$ such that $w_2=v$ is a consequence of $R$.

Either way, there exists $v\in\{a_1,a_2,b_1,b_2\}^*$ such that $w_2=v$ is a consequence of $R$.

Also, since $|w_1|_c$ is odd, by Lemma \ref{w=ascw2} there exist $u_1\in\{a_1,a_2\}^*$ and $u_2\in\{a_1,a_2,b_1,b_2\}^*$ such that $w_1=u_1cu_2$ is a consequence of $R$.

Next, as in the proof of the Lemma \ref{u3cvarphi}, we have 
$$
\dom((u_1cu_2)\varphi)\subseteq\dom((u_1c)\varphi)=\dom((u_1\varphi)\gamma)=\{0,i\}\quad\text{and}\quad  
(u_1\varphi)\gamma=\begin{pmatrix}0&i\\1&0\end{pmatrix},
$$ 
for some $i\in\{1,\ldots,n-1\}$, 
and $(u_1cu_2)\varphi=w_1\varphi=w_2\varphi=v\varphi$.
On the other hand, since $u_2,v\in\{a_1,a_2,b_1,b_2\}^*$, we deduce that 
$$
0\in\dom(u_2\varphi)\Longrightarrow 0(u_2\varphi)=0, \quad 
1\in\dom(u_2\varphi)\Longrightarrow 1(u_2\varphi)\neq 0,
$$
$$
0\in\dom(v\varphi)\Longrightarrow 0(v\varphi)=0\quad\text{and}\quad  
i\in\dom(v\varphi)\Longrightarrow i(v\varphi)\neq 0, 
$$ 
whence 
$$
0\in\dom(v\varphi) \Longrightarrow  1\in\dom(u_2\varphi) ~\text{and}~0=0(v\varphi)=0((u_1cu_2)\varphi)=1(u_2\varphi)\neq 0
$$
and
$$
i\in\dom(v\varphi) \Longrightarrow  0\in\dom(u_2\varphi) ~\text{and}~0=0(u_2\varphi)=i((u_1cu_2)\varphi)=i(v\varphi)\neq 0,
$$
from which we conclude that $(u_1cu_2)\varphi=v\varphi=\emptyset$. 

Now, since $|b_2cb_2|_c=|u_1cu_2|_c=1$ and
$
(b_2cb_2)\varphi=(b_2\varphi)(c\varphi)(b_2\varphi)=\beta_2\gamma\beta_2=\emptyset=(u_1cu_2)\varphi
$, 
by Lemma \ref{samepar}, we get that $u_1cu_2=b_2cb_2$ is a consequence of $R$. 
Moreover, we also have that $|b_2cb_2c|_c$ and $|v|_c$ are both even and
$(b_2cb_2c)\varphi=((b_2cb_2)\varphi)(c\varphi)=\emptyset\gamma=\emptyset=v\varphi$ and so, 
by Lemma \ref{samepar}, we also obtain that $v=b_2cb_2c$ is a consequence of $R$.

Therefore, by the relation $(R_{19})$ we deduce that $b_2cb_2c=b_2cb_2$ is a consequence of $R$, 
whence $u_1cu_2=v$ is a consequence of $R$ and so $w_1=w_2$ is a consequence of $R$, as required. 
\end{proof}

Finally, as a consequence of Proposition \ref{provingpresentation} and Lemmas \ref{genrel}, \ref{a1a2b1b2}, \ref{samepar} 
and \ref{difpar}, we immediately have our main result of this section: 

\begin{theorem}
For $n\geqslant 4$, the monoid $\DPS_n$ is defined by the presentation $\langle A\mid R \rangle$ on $5$ generators and $3n+9$ relations.
\end{theorem}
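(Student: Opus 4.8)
The plan is to apply Proposition \ref{provingpresentation} to $M=\DPS_n$ with generating set $X=\{\alpha_1,\alpha_2,\beta_1,\beta_2,\gamma\}$, alphabet $A=\{a_1,a_2,b_1,b_2,c\}$, the injective map $f$ and the surjective homomorphism $\varphi\colon A^*\longrightarrow\DPS_n$ fixed just before Lemma \ref{genrel}, and to the set $R$ of the $3n+9$ listed relations. It then suffices to verify the two conditions of that proposition.

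Condition 1 — that $X$ satisfies (via $\varphi$) every relation of $R$ — is exactly Lemma \ref{genrel}, so nothing further is needed there.

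For Condition 2, I would take arbitrary $w_1,w_2\in A^*$ with $w_1\varphi=w_2\varphi$ and argue that $w_1=w_2$ is a consequence of $R$, splitting on the occurrences of the letter $c$. If $w_1,w_2\in\{a_1,a_2,b_1,b_2\}^*$, then Lemma \ref{a1a2b1b2} applies directly. Otherwise at least one of $w_1,w_2$ lies in $\{a_1,a_2,b_1,b_2,c\}^*\setminus\{a_1,a_2,b_1,b_2\}^*$, and after interchanging $w_1$ and $w_2$ if necessary — legitimate, since the hypothesis $w_1\varphi=w_2\varphi$ is symmetric — we may assume it is $w_1$. Now compare the parities of $|w_1|_c$ and $|w_2|_c$: if they coincide, Lemma \ref{samepar} gives that $w_1=w_2$ is a consequence of $R$; if they differ, Lemma \ref{difpar} does. (The subcase $w_2\in\{a_1,a_2,b_1,b_2\}^*$, where $|w_2|_c=0$ is even, is already subsumed here.) These cases are exhaustive, so Condition 2 holds, and Proposition \ref{provingpresentation} yields that $\langle A\mid R\rangle$ is a presentation of $\DPS_n$. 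To finish, one counts $|A|=5$ generators and, summing the relation families, $4+(n-4)+6+6+(n-3)+1+(n-4)+3=3n+9$ relations.

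I do not expect a real obstacle at this stage: the substantive work has already been carried out in the normal-form lemmas (Lemmas \ref{cwc=w1c2w2}, \ref{w=ww=w1cw2}, \ref{w=ascw2}), which push every word containing $c$ into the shape $w_1cw_2$ with $w_1$ over $\{a_1,a_2\}$, and in the domain/image bookkeeping of Lemma \ref{u3cvarphi}. The only points needing a little care in writing up are that the case distinction above is genuinely exhaustive and that the symmetry argument interchanging $w_1$ and $w_2$ is valid, which it is because Lemmas \ref{samepar} and \ref{difpar} require only that the distinguished word contain at least one occurrence of $c$.
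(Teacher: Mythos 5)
Your proposal is correct and follows exactly the paper's route: the authors likewise obtain the theorem as an immediate consequence of Proposition \ref{provingpresentation} together with Lemmas \ref{genrel}, \ref{a1a2b1b2}, \ref{samepar} and \ref{difpar}, with the case split on membership in $\{a_1,a_2,b_1,b_2\}^*$ and on the parity of $|\cdot|_c$ being precisely the intended (and exhaustive) one. Your relation count $3n+9$ also checks out.
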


\medskip 

For completeness, we end this section, and the paper, with the following presentations for the monoids $\DPS_1$, $\DPS_2$ and $\DPS_3$. 

\smallskip 

Since $\DPS_1= \I(\{0\})=\langle\emptyset\rangle$, it is obvious that $\langle z\mid z^2=z\rangle$ is a presentation for  $\DPS_1$. 

\smallskip 

Next, as $\DPS_2= \I(\{0,1\})=\left\langle\begin{pmatrix}0&1\\1&0\end{pmatrix},\begin{pmatrix}0\\0\end{pmatrix}\right\rangle$, 
it is easy to check that 
$$
\left\langle a,s\mid a^2=1,\: s^2=s,\: (sa)^2=sas=(as)^2\right\rangle
$$ 
is a presentation for $\DPS_2$ associated to this set of generators. 

\smallskip 

Finally, recall that $\DPS_3=\langle\alpha_1,\beta_2,\gamma\rangle$. Then, by using GAP computational system \cite{GAP4}, we can easily verify that 
$$
\left\langle a_1,b_2,c\mid a_1^2=1,\: b_2^2=b_2,\: a_1b_2=b_2a_1,\: c^3=c,\: b_2c^2=c^2b_2=ca_1c,\: (a_1c^2)^2=(c^2a_1)^2,\: 
(b_2c)^2=b_2cb_2\right\rangle
$$
is a presentation for $\DPS_3$ associated to the generators $\alpha_1$, $\beta_2$ and $\gamma$.

\bigskip 

\lastpage 

\end{document}